\newcommand{\dirkP}{\ensuremath{(\mathfrak{D})}\xspace}
\newtheorem{thm}{Theorem}[section]
\newtheorem{cor}[thm]{Corollary}
\newtheorem{lem}[thm]{Lemma}
\newtheorem{prop}[thm]{Proposition}
\newtheorem{prob}{\sc Problem}
\newtheorem{eks}[thm]{\sc Example}
\theoremstyle{definition}
\newtheorem{defn}[thm]{Definition}
\theoremstyle{remark}
\newtheorem{rem}[thm]{Remark}
\numberwithin{equation}{section}
\newcommand{\eps}{\varepsilon}
\DeclareMathOperator{\linspan}{span}
\DeclareMathOperator{\cspan}{\overline{span}}
\DeclareMathOperator{\conv}{conv}
\DeclareMathOperator{\clco}{\overline{conv}}
\DeclareMathOperator{\sign}{sign}
\DeclareMathOperator{\supp}{supp}
\title{Delta- and Daugavet-points in Banach spaces}
\author{T. A. Abrahamsen, R. Haller, V. Lima, and K. Pirk}
\address[T.~A.~Abrahamsen]{Department of Mathematics, University of
  Agder, Postboks 422, 4604 Kristiansand, Norway.}
\email{trond.a.abrahamsen@uia.no}
\urladdr{http://home.uia.no/trondaa/index.php3}
\address[R.~Haller and K.~Pirk]{Institute of Mathematics, University of Tartu, J.~Liivi 2, 50409 Tartu, Estonia}
\curraddr{}
\email{rainis.haller@ut.ee, katriinp@ut.ee}
\address[V.~Lima]{Department of Engineering Sciences, University of Agder,
Postboks 422, 4604 Kristiansand, Norway.}
\email{Vegard.Lima@uia.no}
\thanks{R.~Haller and K.~Pirk were partially supported by institutional research funding IUT20-57 of the Estonian Ministry of Education and Research.}
\subjclass[2010]{Primary 46B20, 46B22, 46B04}
\keywords{Diametral diameter two property, Daugavet property, $L_1$-space, $L_1$-predual space, M{\"u}ntz space}
\begin{document}
\begin{abstract}
  A $\Delta$-point $x$ of a Banach space is a norm one element
  that is arbitrarily close to convex combinations
  of elements in the unit ball that are almost at distance
  $2$ from $x$.
  If, in addition, every point in the unit ball is arbitrarily
  close to such convex combinations, $x$ is a Daugavet-point.
  A Banach space $X$ has the Daugavet
  property if and only if every norm one element is a Daugavet-point.

  We show that $\Delta$- and Daugavet-points
  are the same in $L_1$-spaces, $L_1$-preduals,
  as well as in a big class of M{\"u}ntz spaces.
  We also provide an example of a Banach
  space where all points on the unit sphere are $\Delta$-points,
  but where none of them are Daugavet-points.

  We also study the property that the unit ball
  is the closed convex hull of its $\Delta$-points.
  This gives rise to a new diameter two property
  that we call the convex diametral diameter two property.
  We show that all $C(K)$ spaces, $K$ infinite compact Hausdorff,
  as well as all M{\"u}ntz spaces have this property.
  Moreover, we show that this property is stable under absolute sums.
\end{abstract}

\maketitle

\section{Introduction}
\label{sec:introduction}
Let $X$ be a real Banach space with unit ball $B_X$,
unit sphere $S_X$, and dual $X^\ast$.
Recall that $X$ has the
\emph{local diameter two property (LD2P)}
if every slice of $B_X$ has diameter two.
Recall that a \emph{slice} of $B_X$ is a subset of the
form
\begin{align*}
  S(x^*,\eps) = \{x \in B_X: x^*(x) > 1 - \eps\},
\end{align*}
where $x^* \in S_{X^*}$ and $\eps > 0$.

For $x \in S_X$ and $\varepsilon > 0$ denote
\begin{equation*}
  \Delta_\varepsilon (x) =
  \{y \in B_X \colon \|x-y\| \geq 2 - \varepsilon \}.
\end{equation*}
We say that $x \in S_X$ is a \emph{$\Delta$-point} if
we have $x\in \clco \Delta_\eps(x)$,
the norm closed convex hull of $\Delta_\eps(x)$, for all $\eps > 0$.
The set of all $\Delta$-points in $S_X$ is denoted by $\Delta$,
i.e.
\begin{align*}
  \Delta =
  \{ x \in S_X \colon x \in \clco\Delta_\varepsilon(x)
   \mbox{ for all } \varepsilon > 0 \}.
\end{align*}
We will sometimes need to clarify which Banach space
we are working with and write
$\Delta_\varepsilon^X(x)$ and $\Delta_X$
instead of $\Delta_\varepsilon(x)$ and $\Delta$ respectively.

The starting point of this research was the discovery that if a Banach
space $X$ satisfies $B_X = \clco \Delta$, then $X$
has the LD2P.

We study spaces that satisfy the property $B_X = \clco \Delta$ in
Section~\ref{sec:convex-dld2p}. The case $S_X = \Delta$, i.e. $x \in
\clco\Delta_\varepsilon(x)$ for all $x\in S_X$ and $\varepsilon>0$,
has already appeared in the literature, but under different names:
The diametral local diameter two property (DLD2P) (\cite{BLR_diametral}),
the LD2P+ (\cite{MR3499106} and \cite{MR3574595}),
and space with bad projections (\cite{Zbl 1071.46015}).
We will use the term DLD2P in this paper.
From \cite[Corollary~2.3 and (7) p.~95]{MR1856978}
and \cite[Theorem~1.4]{Zbl 1071.46015} the following
characterization is known.

\begin{prop}\label{prop:delta-char}
  Let $X$ be a Banach space. The following assertions are
  equivalent:
  \begin{enumerate}
  \item
    $X$ has the DLD2P;
  \item
    for all $x\in S_X$ we have
    $x \in \clco\Delta_\varepsilon(x)$
    for all $\varepsilon > 0$;
  \item for all projections
    $P:X \to X$ of rank-1 we have $\|Id - P\| \geq 2$.
  \end{enumerate}
\end{prop}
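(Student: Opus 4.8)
The plan is to prove the chain $(1)\Leftrightarrow(2)$ and $(2)\Leftrightarrow(3)$. The first equivalence is essentially a reformulation resting on the Hahn--Banach separation theorem; the second is the standard dictionary between rank-one projections and slices, and is where the real work lies.

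For $(1)\Leftrightarrow(2)$, the key elementary observation is: for a bounded set $A\subseteq B_X$ and a point $x\in B_X$, one has $x\in\clco A$ if and only if every slice of $B_X$ containing $x$ meets $A$. Indeed, if $x\notin\clco A$, then by Hahn--Banach there are $x^*\in S_{X^*}$ and a scalar $\alpha$ with $x^*(a)\le\alpha<x^*(x)$ for all $a\in A$; since $x^*(x)\le 1$ we may set $\eps=1-\alpha>0$, and then $x\in S(x^*,\eps)$ while $S(x^*,\eps)\cap A=\emptyset$. Conversely, if a slice $S(x^*,\eps)$ with $x^*\in S_{X^*}$ contains $x$ but misses $A$, then $\sup_{a\in A}x^*(a)\le 1-\eps<x^*(x)$, and since $\sup_{a\in\clco A}x^*=\sup_{a\in A}x^*$ this forces $x\notin\clco A$. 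Applying this with $A=\Delta_\eps(x)$ shows, for fixed $x\in S_X$ and $\eps>0$, that ``$x\in\clco\Delta_\eps(x)$'' is equivalent to ``every slice of $B_X$ containing $x$ meets $\Delta_\eps(x)$''. Quantifying over $x\in S_X$ and $\eps>0$ and unwinding the definition of the DLD2P --- every slice through a unit-sphere point contains, for each $\eps>0$, a point at distance more than $2-\eps$ --- then yields $(1)\Leftrightarrow(2)$, modulo the harmless bookkeeping of passing between $\ge 2-\eps$ and $>2-\eps$.

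For $(2)\Leftrightarrow(3)$, I would use that every rank-one projection on $X$ has the form $Pv=x^*(v)x$ with $x\in X\setminus\{0\}$ and $x^*(x)=1$, so that $Id-P$ is the bounded projection onto $\ker x^*$ along $\linspan\{x\}$; writing $u=x/\|x\|\in S_X$ one has $(Id-P)u=0$, $(Id-P)v=v-x^*(v)x\in\ker x^*$ for all $v\in X$, and $\|Id-P\|=\sup_{v\in B_X}\|v-x^*(v)x\|$. To prove $(3)\Rightarrow(2)$ in contrapositive form, suppose $u\in S_X$ satisfies $u\notin\clco\Delta_{\eps_0}(u)$; pick $x^*\in S_{X^*}$ separating $u$ from $\clco\Delta_{\eps_0}(u)$ (note $x^*(u)>0$ because $-u\in\Delta_{\eps_0}(u)$) and form the rank-one projection $Pv=\frac{x^*(v)}{x^*(u)}\,u$. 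Using the separation inequality $\|u-v\|<2-\eps_0$, valid for $v\in B_X$ with $x^*(v)>\sup_{w\in\Delta_{\eps_0}(u)}x^*(w)$, together with the convexity identity $v-tu=(1-t)v+t(v-u)$, one aims to bound $\|Id-P\|$ strictly below $2$. For $(2)\Rightarrow(3)$, given a rank-one projection $P$ as above, I would apply $(2)$ at the point $u=x/\|x\|$ to suitable slices through $u$ built from $x^*$ and feed the resulting vectors into $\|Id-P\|=\sup_{v\in B_X}\|v-x^*(v)x\|$, exploiting $(Id-P)u=0$.

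The main obstacle is the normalization issue in $(2)\Leftrightarrow(3)$: the functional delivered by Hahn--Banach need not be normed by $u$, equivalently the projection produced can have $\|P\|>1$, and then the naive estimates only control the vectors on which $x^*$ is large and not those on which $x^*$ is very negative (respectively, one only controls vectors near one end of the relevant interval). I expect to handle this by first reducing $(3)$ to its special case for norm-one rank-one projections, or by choosing the separating functional so that $u$ becomes one of its norming points; carrying out this reduction carefully, along the lines of the cited references, is the technical heart of the argument.
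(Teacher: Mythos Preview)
Your treatment of $(1)\Leftrightarrow(2)$ via Hahn--Banach separation is correct and is exactly what the paper (and its cited references) do. The gap is in $(2)\Leftrightarrow(3)$. You have correctly located the obstruction --- the separating functional need not norm $u$, so the projection you build may have norm larger than $1$ and your convexity identity $v-tu=(1-t)v+t(v-u)$ only controls $t\in[0,1]$ --- but both of your proposed fixes are equivalent to the open problem the paper discusses right after stating this proposition. ``Reducing $(3)$ to its special case for norm-one projections'' means proving \dirkP$\Rightarrow(3)$; ``choosing the separating functional so that $u$ becomes one of its norming points'' would produce a \emph{norm-one} projection with $\|Id-P\|<2$, i.e.\ it would prove $\lnot(2)\Rightarrow\lnot\dirkP$, equivalently \dirkP$\Rightarrow(2)$. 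Either route would establish \dirkP$\Leftrightarrow\text{DLD2P}$, which the paper explicitly records as unresolved (and which Proposition~\ref{prop:point-dirk-not-point-dld2p} shows fails pointwise in $\ell_1$). So these cannot be how the Ivakhno--Kadets argument goes.

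The missing idea bypasses norm-one projections entirely. For $(2)\Rightarrow(3)$: given $P=x^*\otimes x$ with $x\in S_X$ and $x^*(x)=1$, take $y$ in the slice $\{v\in B_X:x^*(v)>1-\varepsilon\}$ with $\|x-y\|>2-\varepsilon$, and apply a functional $z^*\in S_{X^*}$ norming $x-y$; then $z^*(x)>1-\varepsilon$ and $z^*(y)<-1+\varepsilon$, so $\|(Id-P)y\|\ge z^*\bigl(x^*(y)x-y\bigr)>(1-\varepsilon)^2+(1-\varepsilon)$, and no upper bound on $x^*(y)$ is ever used. For $(3)\Rightarrow(2)$ in contrapositive form: given a bad slice $S(y^*,\alpha)\ni u$ disjoint from $\Delta_{\varepsilon_0}(u)$, fix $x^*\in S_{X^*}$ with $x^*(u)=1$ and build the projection from the perturbed functional $z^*=(1-\lambda)x^*+\lambda y^*$ with $\lambda>0$ small. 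Now $z^*(u)=1-\lambda(1-y^*(u))$ is close to $1$, so $t=z^*(v)/z^*(u)$ is forced near $1$ and your convexity bound does apply; simultaneously, $\lambda y^*(v)=z^*(v)-(1-\lambda)x^*(v)\ge z^*(v)-(1-\lambda)$ shows that $t$ near $1$ already forces $y^*(v)>1-\alpha$, placing $v$ in the original slice and giving $\|u-v\|<2-\varepsilon_0$. Choosing first $\lambda$ small and then the tolerance on $t$ small relative to $\lambda$ yields $\|Id-P\|<2$.
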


Related to the DLD2P is the Daugavet property.
We have (cf. \cite[Corollary~2.3]{MR1856978}):

\begin{prop}\label{prop:daug-char}
  Let $X$ be a Banach space. The following assertions are
  equivalent:
  \begin{enumerate}
  \item
    $X$ has the Daugavet property, i.e.
    for all bounded linear rank-1 operators $T:X \to X$
    we have $\|Id - T\| = 1 + \|T\|$;
  \item for all $x \in S_X$ we have
    $B_X = \clco \Delta_\varepsilon(x)$
    for all $\varepsilon > 0$.
  \end{enumerate}
\end{prop}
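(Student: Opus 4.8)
The plan is to route the argument through the geometric ``slice'' reformulation of the Daugavet property and to bridge it to condition~(2) with a Hahn--Banach separation. We use the following standard characterization (see \cite[Corollary~2.3]{MR1856978}): $X$ has the Daugavet property if and only if for every $x\in S_X$, every $x^*\in S_{X^*}$, and every $\varepsilon>0$ there is $y\in S(x^*,\varepsilon)$ with $\|x+y\|>2-\varepsilon$. Replacing $x$ by $-x$ (again a point of $S_X$) recasts this as: for every $x\in S_X$, every $x^*\in S_{X^*}$, and every $\varepsilon>0$ the slice $S(x^*,\varepsilon)$ meets $\Delta_\varepsilon(x)$; and since in the present terminology every slice of $B_X$ has the form $S(x^*,\delta)$ with $x^*\in S_{X^*}$ and $\delta>0$, this is in turn equivalent to the statement $(\star)$: \emph{for every $x\in S_X$ and every $\varepsilon>0$ the set $\Delta_\varepsilon(x)$ intersects every slice of $B_X$.} So it suffices to prove $(2)\Leftrightarrow(\star)$.

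For $(\star)\Rightarrow(2)$, fix $x\in S_X$ and $\varepsilon>0$; the inclusion $\clco\Delta_\varepsilon(x)\subseteq B_X$ is automatic, so suppose it is proper and pick $z\in B_X\setminus\clco\Delta_\varepsilon(x)$. Hahn--Banach separation yields $x^*\in S_{X^*}$ with $s:=\sup_{y\in\Delta_\varepsilon(x)}x^*(y)=\sup_{y\in\clco\Delta_\varepsilon(x)}x^*(y)<x^*(z)\le1$; then $S(x^*,1-s)$ is a slice of $B_X$ containing $z$ but disjoint from $\Delta_\varepsilon(x)$, contradicting $(\star)$. For $(2)\Rightarrow(\star)$, fix $x\in S_X$, $\varepsilon>0$ and a slice $S(x^*,\alpha)$, choose $w\in B_X$ with $x^*(w)>1-\alpha/2$, and use~(2) to approximate $w$ to within $\alpha/2$ by a convex combination $\sum_i\lambda_i y_i$ with all $y_i\in\Delta_\varepsilon(x)$; then $\sum_i\lambda_i x^*(y_i)>1-\alpha$, so some $y_i$ lies in $S(x^*,\alpha)\cap\Delta_\varepsilon(x)$. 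This last step is just the elementary fact that a subset of $B_X$ whose closed convex hull equals $B_X$ must meet every slice of $B_X$.

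Since everything here repackages known material, I do not expect a genuine obstacle; the points needing care are (i) keeping the polarity and the $\varepsilon$'s straight when invoking the geometric form of the Daugavet property --- in particular the harmless substitution $x\mapsto-x$ that converts the ``$\|x+y\|$'' version into the ``$\Delta_\varepsilon(x)$'' version --- and (ii) the observation that $\clco A=B_X$ forces $A$ to meet every slice, which is precisely what makes the convex-hull and slice pictures interchangeable. If one prefers not to cite \cite{MR1856978}, one can establish $(1)\Leftrightarrow(2)$ directly by the same two moves using rank-one operators: for $(2)\Rightarrow(1)$, write a nonzero rank-one $T$ as $x^*\otimes y$ with $x^*\in S_{X^*}$, apply~(2) to $\hat y:=y/\|y\|$ together with a thin slice determined by $x^*$ to get $u$ with $x^*(u)\approx1$ and $\|u-\hat y\|\approx2$, and estimate $\|(Id-T)u\|=\|u-x^*(u)y\|\approx1+\|y\|=1+\|T\|$; for $(1)\Rightarrow(2)$, if $\clco\Delta_\varepsilon(x)\ne B_X$ then separation gives $x^*$ with $\sup_{\Delta_\varepsilon(x)}x^*=s<1$, and the Daugavet equation $\|Id-x^*\otimes x\|=2$ then produces $u$ with $x^*(u)>\max\{s,1-\varepsilon/2\}$ and $\|u-x\|>2-\varepsilon$ --- a contradiction.
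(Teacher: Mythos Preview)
Your proposal is correct and matches the paper's treatment: the paper does not prove this proposition but simply cites \cite[Corollary~2.3]{MR1856978}, and later (in the proof of Lemma~\ref{Daugavet point crit}) notes that the passage between the slice formulation and the closed-convex-hull formulation is Hahn--Banach separation---precisely the route you take.
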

  
Clearly the Daugavet property implies the DLD2P, but the converse
is not true \cite[Corollary~3.3]{Zbl 1071.46015}.

We will say that $x \in S_X$ is a \emph{Daugavet-point} if
we have $B_X = \clco \Delta_\eps(x)$ for all $\eps > 0$.
Every Daugavet-point is a $\Delta$-point, but the converse might fail
(see Example~\ref{exmp:Daug-vs-Delta} for an extreme example of this).

In our language, \cite[(7) p.~95]{MR1856978} states that
a Banach space $X$ the DLD2P is equivalent to:
\begin{itemize}
\item[\dirkP]
  For all projections
  $P:X \to X$ of rank-1 and \emph{norm-1} we have
  $\|Id - P\| = 2$.
\end{itemize}
This statement is repeated in \cite[Theorem~3.2]{MR3574595} and
used in the argument of
\cite[Theorem~3.5 (i) $\Leftrightarrow$ (iii)]{MR3574595}.
In the case of the Daugavet property, it \emph{is enough} to
consider only norm 1 operators $T$. This follows by scaling
(see the argument below Definition~2.1 in \cite{MR1856978}).
However, a scaled projection is not a projection.
Upon request, neither the authors of \cite{MR3574595}
nor \cite{MR1856978} have been able to give a correct
proof that \dirkP is equivalent to the DLD2P.
Thus the validity of this equivalence is still an open question.

Through an investigation of $\Delta$- and Daugavet-points
in concrete spaces, we have been able to show that
for $L_1(\mu)$-spaces, where $\mu$ is a $\sigma$-finite measure
on an infinite set, and for $L_1(\mu)$-predual spaces,
the property in \dirkP \emph{is} equivalent
to the DLD2P (and even to the Daugavet property)
(see Theorems~\ref{thm:Daugavet-eq-dirk-prop-L1}
and \ref{thm:Daugavet-eq-dirk-prop-L1predual} below).

In connection with the open problem just mentioned,
it is worth noting that for $X = \ell_1$ a pointwise version of the property
\dirkP holds for some $x \in S_X$ even though $S_X$ has no $\Delta$-points
(see Proposition \ref{prop:point-dirk-not-point-dld2p} and Theorem~\ref{thm:L1mu-DaugDeltaChar}).

In the following we will bring in our main results. In
Section~\ref{sec:daug-points-delta-different} we look at the $\Delta$-
and Daugavet-points in $L_1(\mu)$ spaces when $\mu$ is a $\sigma$-finite
measure, preduals of $L_1(\mu)$ spaces for such measure $\mu$,
and a big class of M\"{u}ntz spaces. We prove that $\Delta$-
and Daugavet-points are the same in all these cases
(see Theorems
\ref{thm:L1mu-DaugDeltaChar}, \ref{thm:Lindenstrauss-D-pt}, and
\ref{thm:muntz-DaugDeltaChar}).

In Section~\ref{sec:stability-results} we show that there are absolute
normalized norms $N$, different from the $\ell_1$- and $\ell_\infty$-norms,
for which $X\oplus _N Y$ has Daugavet-points, and also such $N$ for which
$X\oplus_N Y$ fails to have Daugavet-points.

In Section~\ref{sec:convex-dld2p} we introduce the convex
diametral diameter two property (convex DLD2P) defined naturally using
$\Delta$-points. We show that this property lies strictly between the
properties DLD2P and LD2P (see Corollary~\ref{cor:convDLD2Pisnew}).
We give examples of
classes of spaces with the convex DLD2P, more precisely we show that
all $C(K)$ spaces, $K$ infinite compact Hausdorff, as well as all
M\"{u}ntz spaces, have this property (see Proposition~\ref{prop:CK-convDLD2P} and
Theorem~\ref{Mutnz space has cDLD2P}). We also prove that if $X$ and
$Y$ have the convex DLD2P, then the sum $X\oplus_N Y$ has this
property whenever $N$ is an absolute normalized norm (see Theorem~\ref{sum
  has convDLD2P}).

%

\section{Preliminaries}
\label{sec:preliminaries}

We start this section collecting some characterizations of
$\Delta$- and Daugavet-points from the literature.

\begin{lem}\label{Delta point crit}
  Let $X$ be a Banach space and $x \in S_X$.
  The following assertions are equivalent:
  \begin{enumerate}
  \item\label{aaa}
    $x$ is a $\Delta$-point,
    that is $x \in \clco \Delta_\varepsilon(x)$
    for every $\varepsilon > 0$;
  \item\label{bbb}
    for every slice $S$ of $B_X$ with $x\in S$ and for every
    $\varepsilon > 0$ there exists $y \in S_X$
    such that $\|x - y\| \geq 2 - \varepsilon$;
  \item\label{ccc}
    for every $x^\ast \in X^\ast$ with $x^\ast(x) = 1$
    the projection $P = x^\ast \otimes x$
    satisfies $\|Id - P\| \geq 2$.
  \end{enumerate}
\end{lem}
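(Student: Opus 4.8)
The plan is to prove the chain of equivalences $(1) \Leftrightarrow (2) \Leftrightarrow (3)$, where the heart of the matter is the equivalence $(1) \Leftrightarrow (2)$, a standard Hahn--Banach separation argument. Assume first that $x$ is a $\Delta$-point and fix a slice $S = S(x^\ast,\eps_0)$ with $x \in S$, so $x^\ast(x) > 1 - \eps_0$. Suppose for contradiction that for some $\eps > 0$ we have $\|x - y\| < 2 - \eps$ for \emph{every} $y \in B_X$ with $\|x-y\| \geq 2-\eps$ lying in $S$ --- more precisely, suppose $\Delta_\eps(x) \cap S = \emptyset$; then $\Delta_\eps(x) \subseteq \{y \in B_X : x^\ast(y) \leq 1 - \eps_0\}$, which is a closed convex set, so $\clco \Delta_\eps(x) \subseteq \{y : x^\ast(y) \leq 1-\eps_0\}$, contradicting $x \in \clco\Delta_\eps(x)$ since $x^\ast(x) > 1 - \eps_0$. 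Hence $\Delta_\eps(x) \cap S \neq \emptyset$ for every $\eps > 0$, which gives (2) (after noting one may replace a point of $\Delta_\eps(x)$ of norm possibly less than $1$ by a point of $S_X$ at distance at least $2 - \eps'$ by a routine normalization, shrinking $\eps$ slightly; alternatively one observes $\Delta_\eps(x) \cap S_X$ is what is really needed and the convex-hull condition forces nonemptiness of $\Delta_\eps(x)\cap S$, from which a sphere point is extracted).

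For the converse $(2) \Rightarrow (1)$, I would argue by contraposition using separation: if $x \notin \clco\Delta_\eps(x)$ for some $\eps > 0$, then since $\clco\Delta_\eps(x)$ is a closed convex set not containing $x$, Hahn--Banach gives $x^\ast \in S_{X^\ast}$ and $\alpha \in \mathbb{R}$ with $x^\ast(x) > \alpha \geq \sup\{x^\ast(y) : y \in \Delta_\eps(x)\}$. Normalizing, we may assume $x^\ast(x) > \alpha$; then $S = S(x^\ast, \delta)$ for suitable small $\delta > 0$ with $1 - \delta$ between $\alpha$ and $x^\ast(x)$ (rescaling $x^\ast$ if $x^\ast(x) < 1$, or handling the case $\|x^\ast\| < 1$ directly) is a slice containing $x$ and disjoint from $\Delta_\eps(x)$, so no $y \in S_X \subseteq B_X$ inside $S$ can satisfy $\|x - y\| \geq 2 - \eps$, contradicting (2). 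The one technical wrinkle here is that the separating functional need not attain norm one at $x$, so a small argument is needed to pass from the separation inequality to an honest slice containing $x$; this is the step I expect to require the most care, though it is entirely routine.

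For $(2) \Leftrightarrow (3)$, I would unwind the definition of the operator norm of $Id - P$ where $P = x^\ast \otimes x$ (i.e. $Py = x^\ast(y) x$) under the hypothesis $x^\ast(x) = 1$, which forces $\|x^\ast\| \geq 1$ and makes $P$ a projection with $Px = x$. We have $\|Id - P\| = \sup_{y \in B_X} \|y - x^\ast(y) x\|$. Given $\eps > 0$, condition (3) says this supremum is at least $2$, so there is $y \in B_X$ with $\|y - x^\ast(y)x\| > 2 - \eps$; since $\|y - x^\ast(y) x\| \leq \|y\| + |x^\ast(y)| \leq 1 + 1$ one sees $|x^\ast(y)|$ is close to $1$ and $\|y\|$ is close to $1$, and by replacing $y$ with $-y$ if necessary and then normalizing we extract a point $z \in S_X$ with $x^\ast(z)$ close to $1$ (so $z$ lies in a prescribed slice of the form $S(x^\ast/\|x^\ast\|,\cdot)$, or one argues with $x^\ast$ directly since $x^\ast(x)=1$) and $\|x - z\|$ close to $2$; this is exactly the content of (2) restated over all slices through $x$, since every slice containing $x$ has the form $S(x^\ast,\delta)$ with $x^\ast(x) > 1 - \delta$, and by a further small perturbation one reduces to the normalized case $x^\ast(x) = 1$. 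Conversely, (2) applied to the slice determined by such an $x^\ast$ produces points witnessing $\|Id-P\| \geq 2$. The bookkeeping of $\eps$'s between the three formulations is the only real content; each implication is a short separation or normalization argument, and I anticipate no serious obstacle beyond organizing these estimates cleanly, most likely by first recording a lemma that $x \in S_X$ has a point of $S_X$ at distance $\geq 2 - \eps$ in every slice through $x$ if and only if it does in every set of the form $\{y \in B_X : x^\ast(y) > 1 - \eps\}$ with $x^\ast(x) = 1$.
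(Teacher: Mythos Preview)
Your approach matches the paper's: the equivalence $(1)\Leftrightarrow(2)$ is handled by Hahn--Banach separation exactly as you describe, and for $(2)\Leftrightarrow(3)$ the paper simply refers to the pointwise version of the Ivakhno--Kadets argument, which is what you are unwinding. One small slip to watch: in your $(3)\Rightarrow(2)$ sketch you write $|x^*(y)|\le 1$, but $x^*(x)=1$ only forces $\|x^*\|\ge 1$, so $|x^*(y)|$ may exceed $1$; your proposed reduction lemma (slices through $x$ are exactly the sets $\{y:x^*(y)>1-\eta\}$ with $x^*(x)=1$) is correct but does not by itself pin down $\|x^*\|=1$, so the bookkeeping when $\|x^*\|>1$ needs a bit more care than ``routine normalization'' suggests---though it does go through along the lines you indicate.
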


\begin{proof}
  The equivalence of \ref{aaa} $\Leftrightarrow$ \ref{bbb} is
  proved using Hahn-Banach separation.

  The equivalence
  \ref{bbb} $\Leftrightarrow$ \ref{ccc}
  is a pointwise version of
  \cite[Theorem~1.4]{Zbl 1071.46015}
  and the same proof works.
\end{proof}

\begin{lem}\label{Daugavet point crit}
  Let $X$ be a Banach space and $x \in S_X$.
  The following assertions are equivalent:
  \begin{enumerate}
  \item\label{aaaa}
    $x$ is a Daugavet point,
    that is $B_X = \clco \Delta_\varepsilon(x)$
    for every $\varepsilon > 0$;
  \item\label{bbbb}
    for every slice $S$ of $B_X$ and for every $\varepsilon>0$
    there exists $y\in S$ such that $\|x - y\| \geq 2-\varepsilon$;
  \item\label{cccc}
    for every nonzero $x^\ast\in X^\ast$, the rank-1 operator
    $T = x^\ast\otimes x$ satisfies $\|Id - T\| = 1 + \|T\|$;
  \item\label{dddd}
    for every $x^\ast\in S_{X^\ast}$ the rank-1 norm-1 operator
    $T = x^\ast \otimes x$ satisfies $\|Id - T\|= 2$.
  \end{enumerate}
\end{lem}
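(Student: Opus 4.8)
The plan is to prove the cycle of implications \ref{aaaa} $\Rightarrow$ \ref{bbbb} $\Rightarrow$ \ref{cccc} $\Rightarrow$ \ref{dddd} $\Rightarrow$ \ref{aaaa}, mirroring the proof of Lemma \ref{Delta point crit} but keeping track of the quantitative norm equality rather than the inequality $\|Id-P\|\ge 2$. For \ref{aaaa} $\Rightarrow$ \ref{bbbb}: given a slice $S=S(x^\ast,\delta)$ and $\eps>0$, pick $z\in S$ with $x^\ast(z)>1-\delta$; since $z\in B_X=\clco\Delta_\eps(x)$, we may write $z$ as a limit of convex combinations $\sum_i\lambda_i y_i$ with $y_i\in\Delta_\eps(x)$, and by a standard averaging argument at least one such $y_i$ must satisfy $x^\ast(y_i)>1-\delta$, i.e. $y_i\in S$ and $\|x-y_i\|\ge 2-\eps$ (after absorbing a small loss into $\eps$). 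For \ref{bbbb} $\Rightarrow$ \ref{aaaa}: if $B_X\ne\clco\Delta_\eps(x)$ for some $\eps$, Hahn–Banach separation produces $x^\ast\in S_{X^\ast}$ and $\alpha$ with $x^\ast(y)\le\alpha$ for all $y\in\Delta_\eps(x)$ and $x^\ast(z)>\alpha$ for some $z\in B_X$; then $x^\ast(z)>1-\delta$ for a suitable $\delta$, so the slice $S(x^\ast,\delta)$ witnesses the failure of \ref{bbbb}.

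For the operator reformulations, recall $\|Id-T\|=\sup_{y\in B_X}\|y-x^\ast(y)x\|$. To get \ref{bbbb} $\Rightarrow$ \ref{cccc} when $x^\ast\ne 0$, normalize and set $x^\ast\in S_{X^\ast}$; given $\eps>0$, apply \ref{bbbb} to the slice $S(x^\ast,\delta)$ to obtain $y\in B_X$ with $x^\ast(y)>1-\delta$ and $\|x-y\|\ge 2-\eps$, whence
\begin{equation*}
  \|y-x^\ast(y)x\|\ge \|y-x\|-\|x-x^\ast(y)x\|=\|y-x\|-|1-x^\ast(y)|\ge 2-\eps-\delta,
\end{equation*}
so $\|Id-T\|\ge 2$; combined with the trivial bound $\|Id-T\|\le 1+\|T\|=2$ this gives equality, and the general case follows by homogeneity in $x^\ast$. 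The implication \ref{cccc} $\Rightarrow$ \ref{dddd} is immediate by specializing to $x^\ast\in S_{X^\ast}$, where $\|T\|=\|x^\ast\|\,\|x\|=1$. For \ref{dddd} $\Rightarrow$ \ref{bbbb}: given an arbitrary slice $S(x^\ast,\delta)$ with $x^\ast\in S_{X^\ast}$ and $\eps>0$, the hypothesis $\|Id-T\|=2$ yields $y\in B_X$ with $\|y-x^\ast(y)x\|>2-\eta$ for $\eta$ small; then $|x^\ast(y)|$ must be close to $1$ (since otherwise $\|y-x^\ast(y)x\|\le 1+|x^\ast(y)|$ is bounded away from $2$), and after replacing $y$ by $-y$ if necessary we get $x^\ast(y)>1-\delta$; finally $\|x-y\|\ge\|y-x^\ast(y)x\|-|1-x^\ast(y)|>2-\eta-\delta\ge 2-\eps$ for suitable choices, so $y\in S(x^\ast,\delta)$ is the desired witness.

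The main obstacle is the bookkeeping in \ref{dddd} $\Rightarrow$ \ref{bbbb}: one must extract from the single scalar inequality $\|y-x^\ast(y)x\|>2-\eta$ both that $y$ lies (up to sign) in the prescribed slice and that $\|x-y\|$ is nearly $2$, and this requires choosing $\eta$ as a function of the given $\delta$ and $\eps$ carefully — essentially $\eta<\min\{\delta,\eps\}/2$ or so — and checking that the sign flip $y\mapsto -y$ does not spoil the distance estimate (it does not, since $\|x-(-y)\|\ge\|{-y}-x^\ast(-y)x\|-|1-x^\ast(-y)|$ by the same triangle inequality). Everything else is a routine combination of Hahn–Banach separation, an averaging argument over convex combinations, and the triangle inequality, exactly as in the proof of Lemma \ref{Delta point crit} and \cite[Theorem~1.4]{Zbl 1071.46015}.
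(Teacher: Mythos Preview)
Your argument is correct and follows the same route as the paper's: Hahn--Banach separation for \ref{aaaa}\,$\Leftrightarrow$\,\ref{bbbb}, the pointwise version of \cite[Lemma~2.2]{MR1621757} for the operator reformulations, and the scaling trick for \ref{dddd}\,$\Rightarrow$\,\ref{cccc}; you simply spell out the details that the paper leaves to citations. One correction is needed, however: in your step \ref{bbbb}\,$\Rightarrow$\,\ref{cccc} the passage from normalized $x^\ast$ to arbitrary nonzero $x^\ast$ is not ``homogeneity''---the map $\lambda\mapsto\|Id-\lambda S\|$ is certainly not linear---but precisely the scaling argument the paper invokes from the paragraph after \cite[Definition~2.1]{MR1856978}, and without it your chain only gives \ref{aaaa}\,$\Leftrightarrow$\,\ref{bbbb}\,$\Leftrightarrow$\,\ref{dddd} together with the trivial \ref{cccc}\,$\Rightarrow$\,\ref{dddd}, so that step does real work and should be named correctly.
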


\begin{proof}
  The equivalence \ref{bbbb} $\Leftrightarrow$ \ref{cccc}
  is a pointwise version of Lemma~2.2 in \cite{MR1621757}.
  The equivalence \ref{aaaa} $\Leftrightarrow$ \ref{bbbb}
  follows by Hahn-Banach separation as observed
  by \cite[Corollary~2.3]{MR1856978}.

  While \ref{cccc} $\Rightarrow$ \ref{dddd} is trivial
  the implication \ref{dddd} $\Rightarrow$ \ref{cccc}
  follows by scaling as explained in the paragraph
  following Definition~2.1 in \cite{MR1856978}.
\end{proof}

The next proposition shows that we cannot add
a version of Lemma~\ref{Daugavet point crit}~\ref{dddd}
to Lemma~\ref{Delta point crit}.
In fact, we will see in Theorem~\ref{thm:L1mu-DaugDeltaChar}
that no point on the sphere in $\ell_1$ is
a $\Delta$-point.

\begin{prop}\label{prop:point-dirk-not-point-dld2p}
  Let $X = \ell_1$ and $x=(x_i)_{i=1}^\infty \in S_X$ a smooth point
  with $|x_1| > 1/3.$
  Then:
  \begin{enumerate}
  \item
    for $x^* \in S_{X^*}$ with $x^*(x) = 1$, the projection
    $P = x^* \otimes x$ satisfies $\|Id - P\| = 2$;
  \item
    the projection $P = x_1^{-1}e_1^* \otimes x$
    satisfies $\|Id - P\|< 2$.
  \end{enumerate}
\end{prop}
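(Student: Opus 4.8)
The plan is to reduce everything to two short direct computations of operator norms, using the elementary fact that for a bounded operator $T$ on $\ell_1$ one has $\|T\| = \sup_k \|Te_k\|_1$, where $(e_k)$ denotes the standard unit vector basis. First I would recall the structure of smooth points of $S_{\ell_1}$: a norm-one vector $x = (x_i)_i$ is smooth if and only if $x_i \ne 0$ for every $i$, and in that case its unique norming functional in $S_{X^\ast} = S_{\ell_\infty}$ is $x^\ast = (\sign(x_i))_i$. In particular, under our hypothesis all coordinates of $x$ are nonzero, $|x_1| < 1$ (otherwise $x = \pm e_1$ is not smooth), and, since $x \in \ell_1$, we have $x_k \to 0$, so $\inf_k |x_k| = 0$.

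For (1): by smoothness the functional $x^\ast$ in the statement must be $(\sign(x_i))_i$, so $Pe_k = \sign(x_k)\,x$ and hence $(Id-P)e_k = e_k - \sign(x_k)\,x$ for every $k$. Splitting off the $k$-th coordinate and using $\sign(x_k)\,x_k = |x_k| \le 1$ together with $|\sign(x_k)| = 1$ gives $\|(Id-P)e_k\|_1 = (1-|x_k|) + \sum_{i\ne k}|x_i| = 2 - 2|x_k|$. Taking the supremum over $k$ and using $\inf_k|x_k| = 0$ yields $\|Id-P\| \ge 2$; the reverse inequality is the trivial bound $\|Id\| + \|P\|$, where $\|P\| = \|x^\ast\|_\infty\|x\|_1 = 1$, so $\|Id-P\| = 2$.

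For (2): writing $Py = x_1^{-1}y_1\,x$ one checks $P^2 = P$ and $P \ne 0$, so $P$ is indeed a rank-one projection (onto $\linspan\{x\}$), but now of norm $\|P\| = |x_1|^{-1} > 1$. The point is that building $P$ from $e_1^\ast$ forces $Id-P$ to annihilate the first coordinate: $(Id-P)e_1 = e_1 - x_1^{-1}x = -x_1^{-1}\sum_{i\ge 2}x_i e_i$, which has $\ell_1$-norm $|x_1|^{-1}(1-|x_1|) = 1/|x_1| - 1$; and for $k \ge 2$ we simply get $(Id-P)e_k = e_k$, of norm $1$. Hence $\|Id-P\| = \max\{1,\, 1/|x_1| - 1\}$, and since $|x_1| > 1/3$ we get $1/|x_1| - 1 < 2$, so $\|Id-P\| < 2$.

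I do not expect an obstacle of substance — the statement amounts to these two computations — but a few points deserve care. In (1) one uses smoothness both to pin down the unique norming functional and to guarantee $|x_1| < 1$ (hence $\inf_k |x_k| = 0$). In (2) the essential structural observation is that choosing $e_1^\ast$ makes $Id-P$ vanish on the first coordinate, which localizes its norm to the tail mass $\sum_{i\ge 2}|x_i| = 1-|x_1|$; this is exactly what makes the number $1/3$ appear, and in fact for a smooth $x$ one has $\|Id-P\| < 2$ if and only if $|x_1| > 1/3$ (the value $|x_1| = 1/3$ gives $\|Id-P\| = 2$), so the hypothesis is sharp for this particular projection.
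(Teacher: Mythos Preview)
Your proof is correct. Part (1) is essentially the paper's argument: both identify the unique norming functional $x^\ast = (\sign x_i)_i$ via smoothness and compute $\|(Id-P)e_n\|_1 = 2 - 2|x_n|$; you are just a bit more explicit about invoking the column formula $\|T\| = \sup_k \|Te_k\|_1$ and about why the supremum equals $2$ (namely $\inf_k |x_k| = 0$).

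Part (2) is where you genuinely diverge. The paper does \emph{not} use the column formula here; instead it takes an arbitrary $y \in S_{\ell_1}$, observes that the first coordinate of $(Id-P)y$ vanishes, and bounds the tail by the triangle inequality, arriving at $\|(Id-P)y\| \le 1 + \big|2 - |x_1|^{-1}\big| < 2$. Your route is cleaner: by evaluating only on the basis vectors you get the \emph{exact} value $\|Id-P\| = \max\{1,\, |x_1|^{-1} - 1\}$, which immediately explains the role of the threshold $1/3$ and shows it is sharp for this projection, a point the paper's inequality-based bound leaves implicit. The paper's computation, on the other hand, requires no appeal to the special $\ell_1$ operator-norm formula, so it would adapt more readily if one wanted to run the same estimate in a space where that formula is unavailable.
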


\begin{proof}
  Write $x = (x_i)_{i=1}^\infty$.
  Let
  $x^* := (\sign x_i)_{i=1}^\infty \in S_{X^*}$
  and $P := x^* \otimes x$.
  Observe that $x^*(x) = 1$.
  If $e_n$ is the $n$'th standard basis vector in $X$, then
  \begin{align*}
    \|(Id - P)(e_n)\|
    &= \|e_n - \sign x_n x\|
    = |1 - (\sign x_n) x_n| + \sum_{i \neq n} |x_i| \\
    &= 1 - |x_n| + \|x\| - |x_n| = 2 - 2|x_n|,
  \end{align*}
  and since this holds for all $n$ we get $\|Id - P\| = 2$.

  Let $P := x_1^{-1}e_1^* \otimes x$,
  where $e_i^*$ is the $i$'th coordinate vector in $X^* = \ell_\infty$.
  Observe that $x_1^{-1}e_1^*(x) = 1$, so that $P$ is a projection.
  If $y \in S_X$ we get
  \begin{align*}
    \|(Id-P)y\|
    &
      = \|y - x_1^{-1}y_1x\|
      = \sum_{i > 1}|y_i - x_1^{-1}y_1x_i|\\
    &\le
      \sum_{i > 1}|y_i| + |x_1|^{-1}|y_1| \sum_{i > 1} |x_i| \\
    &
      = 1 - 2|y_1| + |x_1|^{-1}|y_1|
      \le 1 + \big|2 - |x_1|^{-1}\big| < 2,
  \end{align*}
  so $\|Id - P\| < 2$, and we are done.
\end{proof}

Let us note that both the DLD2P and property \dirkP
pass from the dual to the space.

\begin{prop}
  Let $X$ be a Banach space.
  Then:
  \begin{enumerate}
  \item
  if $X^*$ has the DLD2P, then $X$ has the DLD2P;
  \item
  if $\|Id_{X^*} - P\| = 2$ for all norm-1 rank-1 projections $P$
  on $X^*$, then $\|Id_X - Q\| = 2$ for all norm-1 rank-1
  projections $Q$ on $X$.
  \end{enumerate}
\end{prop}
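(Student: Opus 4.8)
The plan is to exploit the standard principle that rank-one operators on a space $X$ embed, via the canonical embedding $J\colon X\to X^{**}$, into rank-one operators on $X^{**}$, and that restricting a suitable rank-one operator on $X^{**}$ to the image of $X$ recovers the original. For part (2), suppose $Q = x^*\otimes x$ is a norm-one rank-one projection on $X$; thus $x\in S_X$ and $x^*\in S_{X^*}$ with $x^*(x)=1$. Consider on $X^*$ the rank-one projection $P := J_X x\otimes x^*$, where $J_X x\in X^{**}$ is evaluation at $x$; it is norm-one (since $\|J_Xx\|=1=\|x^*\|$) and a projection because $(J_Xx)(x^*) = x^*(x) = 1$. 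By hypothesis $\|Id_{X^*} - P\| = 2$, so for every $\eps>0$ there is $y^*\in S_{X^*}$ with $\|y^* - (J_Xx)(y^*)\,x^*\| = \|y^* - y^*(x)\,x^*\| \ge 2-\eps$.

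The next step is to transfer this estimate back to $X$. Using Lemma~\ref{Delta point crit}, it suffices to show that $x$ is a $\Delta$-point of $X$, i.e.\ that for every slice $S(z^*,\delta)$ of $B_X$ containing $x$ and every $\eps>0$ there is $w\in S_X$ with $\|x-w\|\ge 2-\eps$; equivalently, by the equivalence \ref{bbb}$\Leftrightarrow$\ref{ccc} in that lemma applied in $X$, that $\|Id_X - x^*\otimes x\| \ge 2$, and then the reverse inequality is automatic, giving the claimed equality. To get $\|Id_X - x^*\otimes x\|\ge 2$, I would use that the norm of a rank-one operator is computed on the unit ball and that $B_{X^{**}} = \overline{J_X B_X}^{w^*}$: the operator $Id_{X^*} - P = Id_{X^*} - J_Xx\otimes x^*$ is precisely the adjoint of $Id_X - x^*\otimes x$, since $(x^*\otimes x)^* = x^*\otimes x$ viewed as $X^*\to X^*$ sends $y^*\mapsto y^*(x)x^*$. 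Hence $\|Id_X - x^*\otimes x\| = \|(Id_X - x^*\otimes x)^*\| = \|Id_{X^*} - P\| = 2$, and part (2) follows.

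For part (1), I would argue analogously but through slices rather than operators. Given a slice $S(x^*,\delta)$ of $B_X$ with $x^*\in S_{X^*}$, $\delta>0$, the corresponding weak$^*$-slice $S_{w^*}(x^*,\delta)$ of $B_{X^{**}}$ is nonempty and, since $X^*$ has the DLD2P, one can run the characterization of Lemma~\ref{Delta point crit}~\ref{bbb} at the level of $X^*$ (or rather use Proposition~\ref{prop:delta-char}): pick $x\in S(x^*,\delta)\cap S_X$, form $P = J_Xx\otimes x^*$ on $X^*$ as above, get $\|Id_{X^*} - P\|\ge 2$ from the DLD2P of $X^*$, and deduce $\|Id_X - x^*\otimes x\|\ge 2$ by the adjoint identity, so $x$ is a $\Delta$-point of $X$. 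Since $x^*$ and $\delta$ were arbitrary, every norm-one functional attains values close to $1$ on $\Delta$-points, which is exactly condition \ref{ccc}/\ref{bbb} of Lemma~\ref{Delta point crit} holding for a dense-in-direction family; invoking Proposition~\ref{prop:delta-char} then gives the DLD2P of $X$.

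The main obstacle I anticipate is bookkeeping the identification $(x^*\otimes x)^* = x^*\otimes x$ (as an operator $X^*\to X^*$) correctly and checking that the projection property and the norm-one property are genuinely preserved under passing to the dual — in particular that $\|J_Xx\|=1$ and $(J_Xx)(x^*)=x^*(x)=1$, so that $P$ really is a \emph{norm-one rank-one projection} on $X^*$ and the hypothesis applies. The rest is the routine fact that the operator norm of a rank-one map equals that of its adjoint, together with the characterizations already recorded in Lemma~\ref{Delta point crit} and Proposition~\ref{prop:delta-char}.
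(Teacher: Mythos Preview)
Your argument for part~(2) is correct and is exactly the paper's: $P = Q^*$ is a norm-1 rank-1 projection on $X^*$, adjoints preserve operator norm, so $\|Id_X - Q\| = \|Id_{X^*} - Q^*\| = 2$. The detour through $\Delta$-points and Lemma~\ref{Delta point crit} is unnecessary here.

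Your argument for part~(1), however, has genuine gaps. First, when you pick $x\in S(x^*,\delta)\cap S_X$ and form $P = J_X x \otimes x^*$, this $P$ is not a projection unless $x^*(x)=1$; membership in the slice only gives $x^*(x) > 1-\delta$. Second, even granting $\|Id_X - x^*\otimes x\| \ge 2$ for that single $x^*$, this does \emph{not} make $x$ a $\Delta$-point, since Lemma~\ref{Delta point crit}\ref{ccc} requires the inequality for \emph{every} functional $z^*$ with $z^*(x)=1$. Third, and most seriously, producing a $\Delta$-point inside every slice is strictly weaker than the DLD2P, which requires that \emph{every} $x\in S_X$ be a $\Delta$-point; your final invocation of Proposition~\ref{prop:delta-char} does not bridge this gap.

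The fix is simply to run your own part~(2) argument, replacing the norm-1 hypothesis by the characterization of the DLD2P in Proposition~\ref{prop:delta-char}(3): given an arbitrary rank-1 projection $Q = x^*\otimes x$ on $X$ (with $x\in S_X$, $x^*(x)=1$, but $\|x^*\|$ unrestricted), its adjoint $Q^* = (\|x^*\|\,x)\otimes \dfrac{x^*}{\|x^*\|}$ is a rank-1 projection on $X^*$; the DLD2P of $X^*$ gives $\|Id_{X^*}-Q^*\|\ge 2$, whence $\|Id_X-Q\|\ge 2$. This is precisely the paper's proof.
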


\begin{proof}
  The second statement is trivial, while
  the first one only requires a bit of rewriting:
  If $Q$ is a rank-1 projection on $X$,
  then $Q = x^{*} \otimes x$ with
  $x^{*} \in X^{*}$, $x \in S_{X}$,
  and $x^{*}(x) = 1$.
  Then
  \begin{equation*}
    P = Q^* = x \otimes x^* = (\|x^*\| x) \otimes \frac{x^*}{\|x^*\|}
  \end{equation*}
  is a rank-1 projection on $X^*$ and by assumption
  $\|Id_{X^*} - P\| = \|Id_{X} - Q\| \ge 2$.
\end{proof}

As we noted in the Introduction, we do not know
if the property in \dirkP is equivalent to
the DLD2P. We end this section by observing
that, just like the DLD2P, property \dirkP
implies that all slices of the unit ball of
both the space and its dual have diameter two.
(See \cite[Theorem~1.4]{Zbl 1071.46015}
and \cite[Theorem~3.5]{MR3574595} for the corresponding
DLD2P result.)
The following result also shows that
despite of Proposition~\ref{prop:point-dirk-not-point-dld2p},
$\ell_1$ is not a candidate for separating property \dirkP
and the DLD2P since $\ell_1$ does not have the LD2P.

\begin{prop}\label{prop:dirk-prop-implies-LD2Ps}
  Let $X$ be a Banach space.
  If $\|Id - P\| = 2$ for all norm-1 rank-1 projections $P$
  on $X$, then $X$ has the LD2P and $X^*$ has
  the $w^*$-LD2P.
\end{prop}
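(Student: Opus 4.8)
The plan is to use that a norm-$1$ rank-$1$ projection on $X$ is precisely an operator $x^{*}\otimes x$ with $x\in S_X$, $x^{*}\in S_{X^{*}}$ and $x^{*}(x)=1$; so the hypothesis \dirkP says that
\[
  \sup_{z\in B_X}\bigl\|\,z-x^{*}(z)\,x\,\bigr\|=2
  \qquad\text{for every such pair }(x,x^{*}),
\]
and I would extract both conclusions from this identity.

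For the LD2P I would argue by contradiction. Suppose some slice $S(y^{*},\eps_{0})$ of $B_X$, with $y^{*}\in S_{X^{*}}$, has diameter at most $2-\eta$ for some $\eta>0$. Since $y^{*}$ need not attain its norm, \dirkP cannot be applied to it directly, and the way around this is the Bishop--Phelps theorem: choose a norm-attaining $z^{*}\in S_{X^{*}}$ with $\|z^{*}-y^{*}\|$ as small as we wish, say $z^{*}(x)=1$ for some $x\in S_X$. For $\delta>0$ small compared with $\eps_{0}$ one then has $S(z^{*},\delta)\subseteq S(y^{*},\eps_{0})$, so $S(z^{*},\delta)$ has diameter at most $2-\eta$ and contains $x$. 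Now $P=z^{*}\otimes x$ is a norm-$1$ rank-$1$ projection, and I would estimate $\|Id-P\|=\sup_{w\in B_X}\|w-z^{*}(w)x\|$ by splitting on $t=z^{*}(w)\in[-1,1]$: when $t>1-\delta$ both $w$ and $x$ lie in $S(z^{*},\delta)$ and when $t<-(1-\delta)$ both $-w$ and $x$ do, so $\|w-x\|\le 2-\eta$, respectively $\|w+x\|\le 2-\eta$; writing $w-tx$ as a convex combination of $w$ and one of $w\pm x$, and using only the triangle inequality when $|t|\le 1-\delta$, gives $\|w-z^{*}(w)x\|\le 2-\min(\eta,\delta)<2$. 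This contradicts \dirkP, so $X$ has the LD2P.

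For the $w^{*}$-LD2P of $X^{*}$, fix $x\in S_X$ and $\eps>0$ and consider the $w^{*}$-slice $S=\{x^{*}\in B_{X^{*}}\colon x^{*}(x)>1-\eps\}$. By Hahn--Banach there is $x^{*}\in S_{X^{*}}$ with $x^{*}(x)=1$, so $x^{*}\in S$ and $P=x^{*}\otimes x$ is a norm-$1$ rank-$1$ projection; by \dirkP, for every $\delta\in(0,\eps)$ there is $z\in B_X$ with $\|z-x^{*}(z)x\|>2-\delta$. Choosing $\phi\in S_{X^{*}}$ that norms $z-x^{*}(z)x$ gives $\phi(z)-x^{*}(z)\phi(x)>2-\delta$, and since $\phi(z)\le 1$ this forces $|\phi(x)|>1-\delta$. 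After replacing $\phi$ by $-\phi$ if necessary so that $\phi(x)>1-\delta$, we have $\phi\in S$, and a short computation evaluating $x^{*}-\phi$ at $z$ shows $\|x^{*}-\phi\|>2-\delta$. Letting $\delta\to 0$ shows that $S$ has diameter $2$.

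The one genuinely delicate step is the first: property \dirkP is a statement about norm-$1$ projections, i.e.\ about functionals attaining their norm at a unit vector, while a general slice is defined by a functional that only approximately attains its norm. Bridging this gap with Bishop--Phelps --- together with the elementary observation that a sufficiently small perturbation of the defining functional keeps a thinner slice inside the original one --- is the heart of the matter; everything else is routine manipulation of the triangle inequality.
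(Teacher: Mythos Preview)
Your argument is correct and follows essentially the same route as the paper's: pass to a norm-attaining functional via Bishop--Phelps, form the corresponding norm-$1$ rank-$1$ projection, and exploit $\|Id-P\|=2$. The only difference is packaging---you prove the LD2P by contraposition (bounding $\|Id-P\|$ from above when a slice has small diameter) while the paper argues directly, extracting from $\|z-y^{*}(z)y\|>2-\delta$ that both $y$ and $z$ lie in the slice and satisfy $\|y-z\|>2-2\delta$; for the $w^{*}$-LD2P the paper works with $P^{*}$ on $X^{*}$ rather than norming $(Id-P)z$, but this is the same computation read dually (and your final bound should be $2-2\delta$ rather than $2-\delta$, which is immaterial).
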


\begin{proof}
  Let $x^* \in S_{X^*}$ and $\varepsilon > 0$
  define a slice $S(x^*,\varepsilon)$.
  Let $\delta > 0$ such that $\delta < \frac{\varepsilon}{2}$.
  Find $y^* \in S_{X^*}$ such that $y^*$ attains
  its norm on $B_X$ and $\|x^* - y^*\| < \frac{\varepsilon}{2}$.
  Let $y \in B_X$ be such that $y^*(y) = 1$
  and define $P = y^* \otimes y$.
  Then $\|Id - P\| = 2$ by assumption and
  we can find $z \in S_X$ such that
  \begin{equation*}
    \|z - P(z)\|
    = \|z - y^*(z)y\|
    > 2 - \delta.
  \end{equation*}
  We may assume that $y^*(z) > 0$.
  We have
  \begin{equation*}
    y^*(z) = |y^*(z)| = \|P(z)\|
    \ge \|P(z) - z\| - \|z\|
    > 2 - \delta - 1
    > 1 - \frac{\varepsilon}{2}.
  \end{equation*}
  Hence
  \begin{equation*}
    x^*(z) = y^*(z) - (y^*-x^*)(z)
    > 1 - \frac{\varepsilon}{2} - \frac{\varepsilon}{2}
    = 1 - \varepsilon,
  \end{equation*}
  i.e. $z \in S(x^*,\varepsilon)$, and
  \begin{equation*}
    \|z - y\| \ge \|z - y^*(z)y\| - \|y^*(z)y - y\|
    > 2 - \delta - |y^*(z) - 1|
    > 2 - 2\delta.
  \end{equation*}
  This proves that $X$ has the LD2P.

  To show that $X^*$ has the $w^*$-LD2P we start with
  a $w^*$-slice $S(x,\varepsilon)$, where $x \in S_X$
  and $\varepsilon > 0$.
  Then we find a $y^* \in S_{X^*}$ where
  $\|Id^* - P^*\|$ almost attains its norm.
  The proof is similar to the LD2P case.
\end{proof}

\section{$\Delta$- and Daugavet-points for different classes of spaces}
\label{sec:daug-points-delta-different}

In the first two parts of this section we study $\Delta$- and
Daugavet-points in Banach spaces $X$ of the type $L_1(\mu)$, $C(K)$,
and $L_1(\mu)$-preduals. Crucial in our study is the discovery that a
$\Delta$-point $f \in S_X$ can be characterized in terms of
properties of the support of $f$ (see
Theorems~\ref{thm:L1mu-DaugDeltaChar} and
\ref{thm:CK-Daug-Delta-limit}). These characterizations of being a
$\Delta$-point are easy to check, and we use them to prove that
$\Delta$- and Daugavet-points are in fact the same in all such spaces $X$.
For example, if $X = C([0,\omega]) = c$ then the
Daugavet-points are exactly the sequences with limits $\pm
1$.

In the last part of the section we study $\Delta$- and
Daugavet-points in M{\"u}ntz spaces $X$ of the type $M_0(\Lambda)
\subset M(\Lambda) \subset C[0,1]$ (see Subsection
\ref{sec:muntz-space} for a definition of a M{\"u}ntz space).
Our initial motivation for doing this, was the
known fact that such spaces $X$ are isomorphic, even almost isometrically
isomorphic in the case $X = M_0(\Lambda)$, to subspaces of $c$ (see
\cite{werner-muntz} and \cite{M1}). Based on this, the results from
\cite{ALMN}, and other results from \cite{M1} one could  expect
similar results for M{\"u}ntz spaces as for $c$. And, indeed, this
is the case, at least for $X = M_0(\Lambda)$ (see Theorem
\ref{thm:muntz-DaugDeltaChar}). In this class of M{\"u}ntz spaces the
$\Delta$- and Daugavet-points are the same and the Daugavet-points
are exactly the functions $f \in S_X$ for which $f(1) = \pm 1.$

\subsection{$L_1(\mu)$ spaces}
\label{sec:daug-delta-points-L1}
Let $\mu$ be a (countably additive, non-negative) measure
on some $\sigma$-algebra $\Sigma$ on a set $\Omega$.
We will assume that $\mu$ is $\sigma$-finite even
though it is not strictly necessary in all the results.
As usual an \emph{atom for $\mu$} is a set $A \in \Sigma$
such that $0 < \mu(A) < \infty$, and
if $B \in \Sigma$ with $B \subseteq A$
satisfies $\mu(B) < \mu(A)$, then $\mu(B) = 0$.

In this section we consider the space
$L_1(\mu) = L_1(\Omega,\Sigma,\mu)$.

\begin{thm}\label{thm:L1mu-DaugDeltaChar}
  The following assertions for $f \in S_{L_1(\mu)}$ are equivalent:
  \begin{enumerate}
  \item\label{item:L1mu-1}
    $f$ is a Daugavet point;
  \item\label{item:L1mu-2}
    $f$ is a $\Delta$-point;
  \item\label{item:L1mu-3}
    $\supp(f)$ does not contain an atom for $\mu$.
  \end{enumerate}
\end{thm}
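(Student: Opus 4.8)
The plan is to prove the implications \ref{item:L1mu-1} $\Rightarrow$ \ref{item:L1mu-2} $\Rightarrow$ \ref{item:L1mu-3} $\Rightarrow$ \ref{item:L1mu-1}. The first is immediate, since every Daugavet point is a $\Delta$-point. The other two both rest on the elementary identity
\[
  \|f-y\|_1 = \|f\|_1 + \|y\|_1 - 2\int_{\{fy>0\}}\min\big(|f|,|y|\big)\,d\mu
  \qquad (f,y\in L_1(\mu)),
\]
obtained by integrating the a.e.\ pointwise relation $|f-y| = |f|+|y| - 2\min(|f|,|y|)\,\mathbf{1}_{\{fy>0\}}$. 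I would record this first; everything afterwards is bookkeeping around it.

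For \ref{item:L1mu-2} $\Rightarrow$ \ref{item:L1mu-3} I would argue contrapositively: assuming $\supp(f)$ contains an atom $A$, I show $f\notin\clco\Delta_\eps(f)$ for small $\eps$, using an idea like the one behind Proposition~\ref{prop:point-dirk-not-point-dld2p}, namely a functional that ``reads off the atom''. On $A$ the function $f$ equals a.e.\ a constant $c\neq0$; assume $c>0$ and fix $\eps$ with $0<\eps<2c\,\mu(A)$. For $y\in\Delta_\eps(f)$ we have $\|y\|_1\le1$, so the identity together with $\|f-y\|_1\ge2-\eps$ forces $\int_{\{fy>0\}}\min(|f|,|y|)\,d\mu\le\eps/2$. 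Writing $y_A$ for the a.e.\ constant value of $y$ on $A$: if $y_A\le0$ then trivially $y_A\le\eps/(2\mu(A))$, while if $y_A>0$ then $A\subseteq\{fy>0\}$, so $\min(c,y_A)\,\mu(A)\le\eps/2<c\,\mu(A)$, which forces $\min(c,y_A)=y_A$ and hence $y_A\le\eps/(2\mu(A))$. Thus the bounded linear functional $z\mapsto\mu(A)^{-1}\int_A z\,d\mu$ on $L_1(\mu)$ is $\le\eps/(2\mu(A))$ on $\Delta_\eps(f)$, hence also on $\clco\Delta_\eps(f)$; but it takes the value $c>\eps/(2\mu(A))$ at $f$, so $f\notin\clco\Delta_\eps(f)$ and $f$ is not a $\Delta$-point. (In contrast to Proposition~\ref{prop:point-dirk-not-point-dld2p}, no lower bound on the mass $c\,\mu(A)$ carried by the atom is needed here, because we exclude the whole closed convex hull rather than a single projection.)

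For \ref{item:L1mu-3} $\Rightarrow$ \ref{item:L1mu-1} I would use Lemma~\ref{Daugavet point crit}~\ref{bbbb}: given a slice $S(h,\delta)$ of $B_{L_1(\mu)}$ (with $h\in L_\infty(\mu)$, $\|h\|_\infty=1$) and $\eps>0$, I must find $y\in S(h,\delta)$ with $\|f-y\|_1\ge2-\eps$. Since $\|h\|_\infty=1$, at least one of the sets $\{h>1-\delta/2\}$, $\{h<-(1-\delta/2)\}$ has positive measure; call it $G$. The key step is to produce $B\subseteq G$ with $0<\mu(B)<\infty$ and $\int_B|f|\,d\mu<\eps/2$. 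If $\mu(G\setminus\supp f)>0$ this is trivial: $\sigma$-finiteness yields such a $B\subseteq G\setminus\supp f$ with $\int_B|f|=0$. Otherwise $G\subseteq\supp f$ up to a null set, so $\mu|_G$ is non-atomic (an atom inside $G$ would be an atom inside $\supp f$); passing to a subset (using $\sigma$-finiteness) we may assume $0<\mu(G)<\infty$, and then $E\mapsto\int_E|f|\,d\mu$ is a finite measure on $G$ of total mass $>0$ having the same null sets as $\mu|_G$ (since $|f|>0$ a.e.\ on $G$), hence non-atomic; by the intermediate-value property of non-atomic finite measures it attains a value in $(0,\eps/2)$ on some $B\subseteq G$, and this $B$ automatically satisfies $0<\mu(B)<\infty$. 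Now let $y=\mathbf{1}_B/\mu(B)$ if $G=\{h>1-\delta/2\}$ and $y=-\mathbf{1}_B/\mu(B)$ otherwise; then $y\in S_{L_1(\mu)}$ and $h(y)=\pm\mu(B)^{-1}\int_B h\,d\mu>1-\delta/2>1-\delta$, so $y\in S(h,\delta)$, while, since $y$ is supported on $B$, the identity gives
\[
  \|f-y\|_1 = 2 - 2\int_{\{fy>0\}}\min\big(|f|,|y|\big)\,d\mu
  \ge 2 - 2\int_B|f|\,d\mu > 2-\eps .
\]
By Lemma~\ref{Daugavet point crit} this makes $f$ a Daugavet point, closing the cycle.

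The main obstacle is the measure-theoretic extraction of the set $B$ inside the peak set of $h$: it is precisely here that the absence of atoms in $\supp(f)$ (hence non-atomicity of $\mu|_{\supp f}$, hence of $|f|\,d\mu$) enters, through the intermediate-value property of non-atomic finite measures. The remaining ingredients — the pointwise norm identity, the atom computation, and the choice of the test element $y$ — are routine.
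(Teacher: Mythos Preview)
Your proof is correct and follows essentially the same strategy as the paper's: for \ref{item:L1mu-2}$\Rightarrow$\ref{item:L1mu-3} both arguments bound the value on the atom of every element of $\Delta_\eps(f)$ and then separate $f$ from the closed convex hull, and for \ref{item:L1mu-3}$\Rightarrow$\ref{item:L1mu-1} both place a normalized indicator on a subset of the functional's peak region carrying little $|f|$-mass. Your packaging is slightly tidier --- the pointwise identity $|f-y|=|f|+|y|-2\min(|f|,|y|)\mathbf{1}_{\{fy>0\}}$ streamlines both directions, and you work directly with the super-level set $\{|h|>1-\delta/2\}$ instead of first approximating $h$ by a simple function --- but the underlying ideas coincide.
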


\begin{proof}
  \ref{item:L1mu-1} $\Rightarrow$ \ref{item:L1mu-2}
  is trivial.

  \ref{item:L1mu-2} $\Rightarrow$ \ref{item:L1mu-3}.
  Fix $f \in S_{L_1(\mu)}$.
  Let $A$ be an atom in $\supp(f)$.
  Note that a measurable function is a.e. constant
  on an atom. We may assume that
  $f|_A = c$ a.e. for some positive constant $c$.
  Fix $0 < \varepsilon < 2c \mu(A)$.

  Let $g \in B_{L_1(\mu)}$ be such that
  $\|f - g\| \ge 2 - \varepsilon$.
  We have $g|_A = d$ for some constant $d$.
  Note that
  \begin{align*}
    2 - \varepsilon
    &\le \int_\Omega |f - g| d\mu
      = \int_{\Omega \setminus A} |f - g| d\mu
      + \int_{A} |f - g| d\mu \\
    &\le
      \int_{\Omega \setminus A} |f| d\mu
      + \int_{\Omega \setminus A} |g| d\mu
      + \int_{A} |f - g| d\mu \\
    &\le
      1 - \int_A |f| d\mu
      +
      1 - \int_A |g| d\mu
      +
      \int_A |f - g| d\mu \\
    &= 1 - c\mu(A) + 1 - |d| \mu(A)
      + |c-d| \mu(A).
  \end{align*}
  Therefore
  \begin{equation*}
    c \mu(A) + d\mu(A) \le |c-d|\mu(A) + \varepsilon.
  \end{equation*}
  If $c \le d$, then $|c-d| = d-c$
  and we get $c \le \frac{\varepsilon}{2\mu(A)}$,
  and this contradicts our choice of $\varepsilon$.
  Thus we have $c \ge d$, and hence
  $|c-d| = c-d$ and $d \le \frac{\varepsilon}{2\mu(A)} < c$.

  If $g_1, \dotsc, g_m \in \Delta_\varepsilon(f)$, then
  \begin{equation*}
    \|f - \sum_{i=1}^m \frac{1}{m} g_i\|
    \ge \int_A | f - \sum_{i=1}^m \frac{1}{m} g_i| \; d\mu
    \ge
    (c - \frac{\varepsilon}{2\mu(A)})\mu(A) > 0.
  \end{equation*}
  This shows that $f \notin \clco \Delta_\varepsilon(f)$
  for this choice of $\varepsilon$.

  \ref{item:L1mu-3} $\Rightarrow$ \ref{item:L1mu-1}.
  Let $f \in S_{L_1(\mu)}$ such that $\supp(f)$
  does not contain atoms.
  Let $\varepsilon > 0$, $\delta > 0$,
  and $x_0^* \in S_{{L_1(\mu)}^*}$.
  By Lemma~\ref{Daugavet point crit}
  we need to find $g \in S_{L_1(\mu)}$ with
  $\| f - g \| \ge 2 - \varepsilon$ such
  that $g \in S(x_0^*, \delta)$.

  Since $\mu$ is $\sigma$-finite
  (so that $L_1(\mu)^* = L_\infty(\mu)$)
  we can find a step-function
  $x^* = \sum_{i=1}^n a_i \chi_{E_i} \in S_{{L_1(\mu)}^*}$
  such that $\|x^* - x_0^*\| < \delta$
  (and the $E_i \cap E_j = \emptyset$ for $i \neq j$).

  We may assume that $|a_1| = 1$. Find subset a $A$ of $E_1$
  such that $\int_A |f| d\mu < \varepsilon/2$.
  Define
  \begin{equation*}
    g := \frac{\sign(a_1)}{\mu(A)}\chi_A \in S_{L_1(\mu)}.
  \end{equation*}
  Then
  \begin{equation*}
    x^*(g) = \sum_{i=1}^n \int_{E_i} a_i g d\mu
    = \frac{1}{\mu(A)} \int_A a_1 \sign(a_1) d\mu = 1,
  \end{equation*}
  \begin{equation*}
    \|f - g\| = \int_{A^c} |f| d\mu
    + \int_A |f-g| d\mu
    \ge |f| + |g| - 2\int_A |f| d\mu
    \ge 2 - \varepsilon,
  \end{equation*}
  and finally
  \begin{equation*}
    x_0^*(g) = x^*(g) - (x^*-x_0^*)(g)
    > 1 - \delta
  \end{equation*}
  as desired.
\end{proof}

\begin{lem}\label{lem:L1-atom-not-LD2P}
  If $\mu$ is a measure with an atom, then
  $L_1(\mu)$ does not have the LD2P.
\end{lem}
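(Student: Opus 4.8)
The plan is to exhibit, for a suitably small $\varepsilon > 0$, a slice of $B_{L_1(\mu)}$ of diameter strictly less than $2$. Let $A \in \Sigma$ be an atom for $\mu$, so $0 < \mu(A) < \infty$ and every measurable function is a.e.\ constant on $A$. The natural functional to use is $x^* = \chi_A$; it has norm $1$ as an element of $L_1(\mu)^*$, since $\sup\{\int_A g\,d\mu : g \in B_{L_1(\mu)}\} = 1$, attained at $g = \chi_A/\mu(A)$. Fix $0 < \varepsilon < 2/3$ and consider the slice $S(x^*,\varepsilon) = \{g \in B_{L_1(\mu)} : \int_A g\,d\mu > 1 - \varepsilon\}$, which is nonempty because it contains $\chi_A/\mu(A)$.

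The next step is to record what membership in this slice forces. If $g \in S(x^*,\varepsilon)$, write $g|_A = d$ (a.e.\ constant, by atomicity of $A$). Then $d\,\mu(A) = \int_A g\,d\mu > 1 - \varepsilon$, while $|d|\,\mu(A) = \int_A |g|\,d\mu \le \|g\| \le 1$; hence $d > 0$ and $d\,\mu(A) \in (1-\varepsilon,\,1]$. Moreover
\begin{equation*}
  \int_{\Omega\setminus A} |g|\,d\mu = \|g\| - \int_A |g|\,d\mu \le 1 - d\,\mu(A) < \varepsilon .
\end{equation*}

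I would then estimate the diameter directly. Given $g, h \in S(x^*,\varepsilon)$ with $g|_A = d$ and $h|_A = e$, both $d\,\mu(A)$ and $e\,\mu(A)$ lie in $(1-\varepsilon,\,1]$, so $|d - e|\,\mu(A) \le \varepsilon$; combined with the bound on the mass off $A$,
\begin{equation*}
  \|g - h\| = \int_A |g-h|\,d\mu + \int_{\Omega\setminus A} |g-h|\,d\mu \le |d-e|\,\mu(A) + \int_{\Omega\setminus A}|g|\,d\mu + \int_{\Omega\setminus A}|h|\,d\mu < 3\varepsilon < 2 .
\end{equation*}
Thus $S(x^*,\varepsilon)$ has diameter at most $3\varepsilon < 2$, so $L_1(\mu)$ fails the LD2P.

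I do not anticipate a genuine obstacle: this is essentially the same computation that drives the implication \ref{item:L1mu-2} $\Rightarrow$ \ref{item:L1mu-3} of Theorem~\ref{thm:L1mu-DaugDeltaChar}, reorganised so that it bounds the diameter of a slice rather than the distance from a fixed $f$ to a convex combination. The only point needing a moment's care is that $\chi_A$ is a legitimate norm-one functional determining a nonempty slice, which holds for any atom and does not even require $\sigma$-finiteness (though one may equally invoke $L_1(\mu)^* = L_\infty(\mu)$).
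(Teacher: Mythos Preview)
Your proof is correct and follows essentially the same approach as the paper: both arguments use the functional $\chi_A$ to define the slice, exploit that functions are a.e.\ constant on the atom $A$ to pin down the mass on $A$ to within $\varepsilon$, bound the off-$A$ mass by $\varepsilon$, and conclude that the slice has diameter at most $3\varepsilon$.
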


\begin{proof} 
  Assume that $A$ is an atom and
  consider $\chi_A \in L_1(\mu)^*$.
  We have $\|\chi_A\| = 1$.
  If $f \in S(B_{L_1(\mu)}, \chi_A, \varepsilon)$, then
  \begin{equation*}
    f(t) > \frac{1-\varepsilon}{\mu(A)}
    \qquad \mbox{for a.e. } t \in A,
  \end{equation*}
  and
  \begin{equation*}
    f(t) \le \frac{1}{\mu(A)}
    \qquad \mbox{for a.e. } t \in A.
  \end{equation*}
  Hence $\|f|_A\| > 1 - \varepsilon$
  and $\|f|_{A^C}\| < \varepsilon$.

  Thus for $f_1, f_2 \in S(B_{L_1(\mu)}, \chi_A, \varepsilon)$
  we have
  \begin{align*}
    \|f_1 - f_2\|
    &\le
      \int_{A^c} |f_1 - f_2| \; d\mu
      +
      \int_{A} |f_1 - f_2| \; d\mu \\
    &\le
      \|f_1|_{A^c}\| + \|f_2|_{A^c}\|
      +
      \int_A \frac{\varepsilon}{\mu(A)} \; d\mu
      \le 3 \varepsilon,
  \end{align*}
  so this slice does not have diameter $2$.
\end{proof}

\begin{thm}\label{thm:Daugavet-eq-dirk-prop-L1}
  Consider $X = L_1(\mu)$.
  The following assertions are equivalent:
  \begin{enumerate}
  \item\label{item:L1-1}
    $\|Id - P\| = 2$ for all norm-1 rank-1
    projections on $X$;
  \item\label{item:L1-2}
    $X$ has the Daugavet property.
  \end{enumerate}
\end{thm}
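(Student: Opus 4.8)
The plan is as follows. The implication \ref{item:L1-2} $\Rightarrow$ \ref{item:L1-1} is immediate from the definition of the Daugavet property: applying $\|Id - T\| = 1 + \|T\|$ to a norm-$1$ rank-$1$ projection $P$ yields $\|Id - P\| = 2$. The real content is \ref{item:L1-1} $\Rightarrow$ \ref{item:L1-2}, and I would prove it by contraposition, exploiting the atomic structure of $\mu$.

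First I would record the following reformulation, which is essentially contained in Theorem~\ref{thm:L1mu-DaugDeltaChar} combined with Proposition~\ref{prop:daug-char}: the space $X = L_1(\mu)$ has the Daugavet property if and only if every $f \in S_X$ is a Daugavet-point, if and only if $\supp(f)$ contains no atom for every $f \in S_X$; and this last condition holds exactly when $\mu$ has no atoms at all, since if $A$ were an atom then $\chi_A/\mu(A)$ would be a norm-one function whose support is the atom $A$. Hence it suffices to show that the existence of an atom for $\mu$ forces property \ref{item:L1-1} to fail.

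So suppose $A \in \Sigma$ is an atom for $\mu$. The key step is to produce a concrete norm-$1$ rank-$1$ projection attached to $A$ with $\|Id - P\| < 2$. The natural candidate is $P := \chi_A \otimes \bigl(\chi_A/\mu(A)\bigr)$: here $\chi_A \in S_{L_\infty(\mu)} = S_{X^*}$ and $\chi_A/\mu(A) \in S_X$, and since $\chi_A\bigl(\chi_A/\mu(A)\bigr) = 1$, $P$ is indeed a projection with $\|P\| = \|\chi_A\|\,\|\chi_A/\mu(A)\| = 1$. For any $g \in B_X$, the standard fact that a measurable function is a.e.\ constant on an atom gives $g\chi_A = c\chi_A$ a.e.\ for some scalar $c$, so that $\chi_A(g) = c\,\mu(A)$ and therefore $(Id - P)g = g - \chi_A(g)\,\chi_A/\mu(A) = g\chi_{A^c}$. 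Consequently $\|(Id - P)g\| = \int_{A^c}|g|\,d\mu \le 1 < 2$, which contradicts \ref{item:L1-1}.

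I do not expect a serious obstacle here: once Theorem~\ref{thm:L1mu-DaugDeltaChar} is in hand, the only substantive point is the observation that an atom carries a norm-one rank-$1$ projection $P$ for which $Id - P$ annihilates everything supported on the atom, so that $\|Id - P\| = 1$; and the one technical ingredient, that measurable functions are a.e.\ constant on atoms, has already been used in the proof of Theorem~\ref{thm:L1mu-DaugDeltaChar} and in Lemma~\ref{lem:L1-atom-not-LD2P}. One could alternatively deduce ``$\mu$ has an atom $\Rightarrow$ not Daugavet'' directly from Lemma~\ref{lem:L1-atom-not-LD2P} (no LD2P, hence no Daugavet property), but the explicit projection is what is needed to contradict property \ref{item:L1-1} specifically, so that is the route I would take.
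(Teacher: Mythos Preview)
Your argument is correct, and in fact more self-contained than the paper's. The paper proceeds forward: property~\ref{item:L1-1} implies the LD2P by Proposition~\ref{prop:dirk-prop-implies-LD2Ps}, the LD2P forces $\mu$ to be atomless by Lemma~\ref{lem:L1-atom-not-LD2P}, and then an external reference (\cite{MR2146042}, \cite{MR2326380}) is invoked to conclude that atomless $L_1(\mu)$ has the Daugavet property. You instead go by contraposition and extract both halves from results already proved in the paper: the equivalence ``Daugavet $\Leftrightarrow$ $\mu$ atomless'' follows directly from Theorem~\ref{thm:L1mu-DaugDeltaChar} together with Proposition~\ref{prop:daug-char} (so no outside citation is needed), and the failure of \ref{item:L1-1} in the presence of an atom is witnessed by the explicit projection $P = \chi_A \otimes (\chi_A/\mu(A))$, for which $(Id-P)g = g\chi_{A^c}$ and hence $\|Id-P\| \le 1$. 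Your route bypasses both Proposition~\ref{prop:dirk-prop-implies-LD2Ps} and Lemma~\ref{lem:L1-atom-not-LD2P}; the paper's route has the mild advantage of illustrating how the abstract implication \dirkP $\Rightarrow$ LD2P interacts with the concrete structure of $L_1$, but yours is the more economical proof.
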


\begin{proof}
  If \ref{item:L1-1} holds, then
  $X$ has the LD2P by Proposition~\ref{prop:dirk-prop-implies-LD2Ps}.
  From Lemma~\ref{lem:L1-atom-not-LD2P} we see that
  $X$ does not have atoms.
  By \cite{MR2146042}
  (see also \cite{MR2326380} for the explicit statement
  for $L_1(\mu)$ spaces)
  $X$ has the Daugavet property.

  The other direction is trivial.
\end{proof}

%
%
\subsection{$C(K)$ and $L_1(\mu)$-predual spaces}
\label{sec:daug-delta-points-CK}
In the following we explore the $\Delta$- and Daugavet-points in the class of $L_1(\mu)$-predual spaces and $C(K)$ spaces. We start with a characterization of both
Daugavet and $\Delta$-points in $C(K)$ spaces.

\begin{thm}\label{thm:CK-Daug-Delta-limit}
  Let $K$ be an infinite compact Hausdorff space. The following
  assertions for $f\in S_{C(K)}$ are equivalent:
  \begin{enumerate}
  \item\label{item:CK-DDlimit-1}
    $f$ is a Daugavet point;
  \item\label{item:CK-DDlimit-2}
    $f$ is a $\Delta$-point;
  \item\label{item:CK-DDlimit-3}
    $\|f\| = |f(x_0)|$ for a limit point $x_0$ of $K$.
  \end{enumerate}
\end{thm}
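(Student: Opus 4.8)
The plan is to prove the cycle $\ref{item:CK-DDlimit-1}\Rightarrow\ref{item:CK-DDlimit-2}\Rightarrow\ref{item:CK-DDlimit-3}\Rightarrow\ref{item:CK-DDlimit-1}$. The first implication is trivial since every Daugavet-point is a $\Delta$-point. For the remaining two I would translate everything into the slice/functional language provided by Lemmas~\ref{Delta point crit} and \ref{Daugavet point crit}, using that for $C(K)$ the extreme points of the dual ball are the (signed) evaluation functionals $\pm\delta_x$, $x\in K$, and general functionals are regular Borel measures.

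For $\ref{item:CK-DDlimit-2}\Rightarrow\ref{item:CK-DDlimit-3}$, I would argue contrapositively: suppose $\|f\|=1$ but $|f(x)|<1$ for every limit point $x$ of $K$. Then the set $M=\{x\in K:|f(x)|=1\}$ consists entirely of isolated points of $K$, and by compactness and continuity of $f$ it is finite, say $M=\{x_1,\dots,x_n\}$ with $f(x_j)=\eta_j\in\{-1,1\}$; moreover each $x_j$ is isolated and $\sup\{|f(x)|:x\notin M\}=:1-2\rho<1$ for some $\rho>0$. The idea is that the functional $x^\ast=\frac1n\sum_{j=1}^n\eta_j\delta_{x_j}$ satisfies $x^\ast(f)=1$, so $f$ lies in the slice $S(x^\ast,\varepsilon)$ for all $\varepsilon$, yet any $g\in B_{C(K)}$ with $\|f-g\|\ge 2-\varepsilon$ (for $\varepsilon$ small) must have $g(x_j)$ close to $-\eta_j$ at each $x_j$ — this forces $x^\ast(g)$ close to $-1$, so such $g$ cannot lie in $S(x^\ast,\varepsilon)$. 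Slightly more carefully: if $\|f-g\|\ge 2-\varepsilon$ then there is some $y\in K$ with $|f(y)-g(y)|\ge 2-\varepsilon$, which forces $|f(y)|\ge 1-\varepsilon$, hence $y$ is one of the $x_j$ provided $\varepsilon<2\rho$, and then $g(x_j)$ is within $\varepsilon$ of $-\eta_j$. Since the $x_j$ are isolated, a neighborhood argument (or rather just evaluating at the finitely many points) shows that $\Delta_\varepsilon(f)$ is separated from $f$ by the functional $x^\ast$, contradicting Lemma~\ref{Delta point crit}\ref{bbb}. I would need to be a little careful that a single bad point $y$ only controls one coordinate of $x^\ast$; the clean fix is to note that for the $C(K)$ norm, $\|f-g\|\ge2-\varepsilon$ together with $\|g\|\le1$ gives, for the particular point $y$ achieving it, $\mathrm{sign}(g(y))=-\eta_{j}$ and $|g(x_j)|\ge 1-\varepsilon$ for that $j$; but to separate by $x^\ast$ I actually want information at all $x_j$ simultaneously, so instead I would separate using a single $\delta_{x_j}$-type functional adapted to whichever index the element of $\Delta_\varepsilon(f)$ happens to hit. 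Concretely: the set $\Delta_\varepsilon(f)$ decomposes into finitely many pieces according to which $x_j$ realizes the near-distance-$2$, and each piece is separated from $f$ by $\eta_j\delta_{x_j}$; but separating a convex hull needs one functional — so the genuinely correct approach is that $\clco\Delta_\varepsilon(f)$ is contained in $\{g:\sum_j \eta_j g(x_j)\le n-2+n\varepsilon\}$... this requires that \emph{each} $g\in\Delta_\varepsilon(f)$ satisfies that inequality, which holds because for the realizing index $j_0$ we have $\eta_{j_0}g(x_{j_0})\le -1+\varepsilon$ while $\eta_j g(x_j)\le 1$ for the rest. Hence $x^\ast(g)\le\frac{n-2+n\varepsilon}{n}<1-\frac1n+\varepsilon<1$ for small $\varepsilon$, so $f\notin\clco\Delta_\varepsilon(f)$.

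For $\ref{item:CK-DDlimit-3}\Rightarrow\ref{item:CK-DDlimit-1}$, suppose $f(x_0)=\eta\in\{-1,1\}$ (after multiplying $f$ by $\pm1$, WLOG $\eta=1$) with $x_0$ a limit point of $K$. By Lemma~\ref{Daugavet point crit}\ref{bbbb} it suffices, given a slice $S(\mu,\delta)$ of $B_{C(K)}$ (with $\mu$ a regular Borel measure, $\|\mu\|=1$) and $\varepsilon>0$, to produce $g\in S(\mu,\delta)$ with $\|f-g\|\ge2-\varepsilon$. Pick $g_0\in S(\mu,\delta/2)$ with $\|g_0\|\le1$; since $x_0$ is a limit point, choose a net/sequence of distinct points accumulating at $x_0$ and, using Urysohn, a continuous bump $\varphi$ with $0\le\varphi\le1$, $\varphi\equiv1$ near $x_0$ on a tiny neighborhood $U$, and $\mathrm{supp}\,\varphi$ so small that $|\mu|(U)<\delta/4$. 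Then set $g=(1-\varphi)g_0-\varphi$, i.e. modify $g_0$ to equal $-1$ on a small neighborhood of $x_0$ while keeping $\|g\|\le1$. On $U$ we have $g\equiv-1$ while $f$ is close to $1$ (by continuity, shrinking $U$), so $\|f-g\|\ge|f(x_0)-g(x_0)|$ can be made $\ge2-\varepsilon$; and $\mu(g)=\mu(g_0)-\mu(\varphi(g_0+1))$ differs from $\mu(g_0)$ by at most $2|\mu|(U)<\delta/2$, so $\mu(g)>1-\delta$, giving $g\in S(\mu,\delta)$. The main obstacle here is the bookkeeping of making the neighborhood $U$ simultaneously small enough in $|\mu|$-measure (to not disturb the slice) and small enough that $f$ stays near $1$ on it (to get distance $\ge2-\varepsilon$) while ensuring $\|g\|\le1$; regularity of $\mu$ and continuity of $f$ handle both, but one must choose $U$ after $\mu$, $f$, $\varepsilon$, $\delta$ are all fixed. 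I expect this last implication, and in particular the correct handling of the general measure $\mu$ (as opposed to just evaluation functionals) together with keeping $g$ in the ball, to be the most delicate part; the forward direction $\ref{item:CK-DDlimit-2}\Rightarrow\ref{item:CK-DDlimit-3}$ is more combinatorial and, as indicated above, hinges on getting the separating functional right for a convex hull rather than for individual elements.
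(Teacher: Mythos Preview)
Your argument for $\ref{item:CK-DDlimit-2}\Rightarrow\ref{item:CK-DDlimit-3}$ is correct and is essentially the paper's: both use the functional $x^*=\frac{1}{n}\sum_j\eta_j\delta_{x_j}$, the paper packaging the computation as a bound $\|Id-x^*\otimes f\|<2$ via Lemma~\ref{Delta point crit}\ref{ccc}, while you separate $\clco\Delta_\varepsilon(f)$ from $f$ directly. (One arithmetic slip: the sum gives $n-2+\varepsilon$, not $n-2+n\varepsilon$, but this is harmless.)

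For $\ref{item:CK-DDlimit-3}\Rightarrow\ref{item:CK-DDlimit-1}$ there is a genuine gap. You want a bump $\varphi$ with $\varphi\equiv 1$ near $x_0$ and $|\mu|(\mathrm{supp}\,\varphi)<\delta/4$, and you say regularity of $\mu$ handles this. But regularity only yields $|\mu|(U)\le|\mu|(\{x_0\})+\eta$ for open $U\ni x_0$; if $x_0$ is an atom of $\mu$ with mass $\ge\delta/4$, no neighborhood of $x_0$ has small measure, and your construction fails. The fix is to actually use the limit-point hypothesis you allude to but never exploit: any neighborhood of $x_0$ contains infinitely many pairwise disjoint nonempty open sets, so by finiteness of $|\mu|$ one of them---say $U$, containing a point $x_1$ with $|f(x_1)-f(x_0)|<\varepsilon/2$---has $|\mu|(U)<\delta/4$. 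Center the bump at $x_1$ rather than $x_0$.

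The paper sidesteps this entirely by not going through slices or measures: given $g\in B_{C(K)}$ and $m\in\mathbb N$, it takes $m$ pairwise disjoint neighborhoods $U_1,\dots,U_m$ of points near $x_0$, forms $g_i=(1-\eta_i)g-\eta_i f(x_0)\in\Delta_\varepsilon(f)$ with Urysohn bumps $\eta_i$, and notes $\|g-\tfrac1m\sum g_i\|\le 2/m$ because the $g-g_i$ have disjoint supports. This proves $B_{C(K)}=\clco\Delta_\varepsilon(f)$ directly from the definition, with no appeal to $C(K)^*$. Once patched, your slice argument is also valid and yields a single element of $\Delta_\varepsilon(f)$ in each slice as Lemma~\ref{Daugavet point crit}\ref{bbbb} requests; the paper's averaging argument is more self-contained and makes the role of the limit point (supplying arbitrarily many disjoint neighborhoods) completely transparent.
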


\begin{proof}
  \ref{item:CK-DDlimit-1} $\Rightarrow$ \ref{item:CK-DDlimit-2}
  is trivial.

  \ref{item:CK-DDlimit-3} $\Rightarrow$ \ref{item:CK-DDlimit-1}.
  Let $f\in S_{C(K)}$ and assume that there is a limit point
  $x_0$ of $K$ such that $|f(x_0)| = 1$.
  We will show that $f$ is a Daugavet-point.
  Fix $g \in B_X$, $\varepsilon>0$, and $m \in \mathbb N$.
  Consider a neighbourhood $U$ of $x_0$
  such that $|f(x_0) - f(x)| < \varepsilon$
  for every $x\in U$.
  Since $x_0$ is a limit point, we can find $m$ different points
  $x_1, \dotsc, x_m\in U$ and corresponding pairwise disjoint neighbourhoods
  $U_1, \dotsc, U_m\subset U$.
  For every $1\leq i\leq m$ use Urysohn's lemma to find
  a continuous function $\eta_i \colon K \to [0,1]$ with
  $\eta_i(x_i) = 1$ and $\eta_i = 0$
  on $K \setminus U_i$.
  Define $g_i\in B_{C(K)}$ by
  \begin{equation*}
    g_i(x) = \big(1-\eta_i(x)\big)g(x) - \eta_i(x)f(x_0).
  \end{equation*}
  From $g_i(x_i) = -f(x_0)$ it follows that
  \begin{equation*}
    \|f - g_i\| \ge |f(x_i) - g(x_i)|
    = |f(x_i) + f(x_0)|
    > 2 - \varepsilon.
  \end{equation*}
  Hence $g_i \in \Delta_\varepsilon(f)$.
  Note that $g - g_i = 0$ on $K \setminus U_i$,
  and consequently
  \begin{equation*}
    \|g - \frac{1}{m}\sum_{i=1}^m g_i\|
    \le
    \frac{1}{m}\max_{1\leq i\leq m} \|g-g_i\|
    \le \frac{2}{m}.
  \end{equation*}
  We thus get $g \in \clco\Delta_\varepsilon(f)$,
  and so $f$ is a Daugavet-point.

  \ref{item:CK-DDlimit-2} $\Rightarrow$ \ref{item:CK-DDlimit-3}.
  We assume that there is no limit point $x$ of $K$ such that
  $|f(x)| = 1$ and show that $f$ is not a $\Delta$-point.
  Define
  \begin{equation*}
      H := \{ x \in K \colon |f(x)| = 1 \}.
  \end{equation*}
  Then $H$ is a set of isolated points.
  By compactness,
  $H$ is finite since otherwise it would contain a limit point.
  Note that $H$ is (cl)open hence
  $\delta = 1 - \max_{x\in K \setminus H}|f(x)| > 0$.
  Let $\varepsilon_h := \sign f(h)$ for all $h \in H$.
  Since $H \neq \emptyset$ we can define
  \begin{equation*}
     \mu = \frac{1}{|H|}\sum_{h\in H} \varepsilon_h \delta_h,
  \end{equation*}
  where $\delta_h \in S_{C(K)^*}$ is the point evaluation map at
  $h$. We have $\|\mu\| = 1$ and $\langle \mu, f \rangle = 1$,
  hence $P = \mu \otimes f$ is a norm 1 projection.

  Let $g \in B_{C(K)}$ and consider
  $\| (Id - P) g\| = \|g -Pg\|
  = \|g - \langle \mu, g \rangle f\|$.
  For $x \notin H$ we have
  \begin{equation*}
    |g(x) - \langle \mu,g \rangle f(x)|
    \leq 1 + 1 - \delta = 2 - \delta.
  \end{equation*}
  While for $x\in H$ we use that
  \begin{equation*}
    \langle \mu, g\rangle
    = \frac{1}{|H|} \sum_{h\in H} \varepsilon_h g(h)
  \end{equation*}
  and $\varepsilon_h f(h) = |f(h)| = 1$, so that
  \begin{align*}
    |g(x) - \langle \mu, g\rangle f(x)|
    &=
      |g(x) - \frac{1}{|H|} \sum_{h\in H} \varepsilon_h g(h) f(x)| \\
    &=
      \Big|
      \Big(
      1 - \frac{1}{|H|}
      \Big) g(x)
      -
      \frac{1}{|H|}
      \sum_{h \in H \setminus \{x\}} \varepsilon_h g(h) f(x)
      \Big| \\
    &\leq
      \Big(1 - \frac{1}{|H|} \Big)
      +
      \frac{|H| - 1}{|H|}
      = 2 - \frac{2}{|H|}.
  \end{align*}
  With $\varepsilon = \min\{ \delta, 2/|H| \}$
  we have
  $\|(Id - P) g\| \leq 2 - \varepsilon < 2$
  for all $g\in B_{C(K)}$ hence $\|Id - P\| < 2$.
\end{proof}

%
%
%
%
%
Let $X$ be a Banach space such that
$X^*$ is isometric to an $L_1(\mu)$-space,
that is, $X$ is a Lindenstrauss space.
For such spaces we have $X^{**}$
is isometric to the space $C(K)$ for some
(extremally disconnected) compact Hausdorff space $K$
(see \cite[Theorem~6.1]{LinMem}).
Our next goal is to show that for such spaces
$\Delta$- and Daugavet-points are the same.
We first need a lemma.

\begin{lem}\label{lem:Daug-ai-ideal}
  Let $X$ be a Banach space and let $x,y \in S_X$.
  The following assertions are equivalent:
  \begin{enumerate}
  \item\label{item:Dai-1}
    $y \in \clco \Delta_\varepsilon^X(x)$ for all $\varepsilon > 0$;
  \item\label{item:Dai-2}
    $y \in \clco \Delta_\varepsilon^{X^{**}}(x)$ for all
    $\varepsilon > 0$.
  \end{enumerate}
\end{lem}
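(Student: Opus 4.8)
The plan is to establish the two implications separately. The implication \ref{item:Dai-1}~$\Rightarrow$~\ref{item:Dai-2} is essentially free and needs no hypothesis on $X$: since the norm of $X^{**}$ restricts to that of $X$ and $B_X\subseteq B_{X^{**}}$, one has $\Delta_\varepsilon^X(x)\subseteq\Delta_\varepsilon^{X^{**}}(x)$ for every $\varepsilon>0$, and because $X$ is a closed subspace of $X^{**}$ the norm closed convex hull of $\Delta_\varepsilon^X(x)$ computed in $X$ is contained in the one computed in $X^{**}$. Thus \ref{item:Dai-1} immediately yields \ref{item:Dai-2}.

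The substance is in \ref{item:Dai-2}~$\Rightarrow$~\ref{item:Dai-1}, and here I would use the principle of local reflexivity to pull a witnessing convex combination from $X^{**}$ back into $X$. First I would fix $\varepsilon,\delta>0$ and, applying \ref{item:Dai-2} with $\varepsilon/2$ in place of $\varepsilon$, choose $w_1,\dots,w_n\in B_{X^{**}}$ with $\|x-w_i\|\geq 2-\varepsilon/2$ together with convex weights $\lambda_1,\dots,\lambda_n$ such that $\|y-\sum_i\lambda_i w_i\|<\delta/2$. On the finite-dimensional subspace $E=\linspan\{x,y,w_1,\dots,w_n\}$ of $X^{**}$ I would then invoke local reflexivity to produce, for a small parameter $\eta>0$, a linear map $T\colon E\to X$ that is an $\eta$-isometry (i.e. $(1-\eta)\|e\|\leq\|Te\|\leq(1+\eta)\|e\|$ for all $e\in E$) and fixes $E\cap X$, so in particular $Tx=x$ and $Ty=y$. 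Setting $z_i:=(1+\eta)^{-1}Tw_i\in B_X$, the identity $\|x-Tw_i\|=\|T(x-w_i)\|\geq(1-\eta)(2-\varepsilon/2)$ and the estimate $\|Tw_i-z_i\|\leq\eta$ give $\|x-z_i\|\geq 2-\varepsilon$ once $\eta$ is small enough, so $z_i\in\Delta_\varepsilon^X(x)$; likewise $\|y-\sum_i\lambda_i z_i\|\leq\|T(y-\sum_i\lambda_i w_i)\|+\max_i\|Tw_i-z_i\|<(1+\eta)\delta/2+\eta$, which is below $\delta$ for $\eta$ small. Letting $\varepsilon$ and $\delta$ vary then gives \ref{item:Dai-1}.

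The only point that requires care — and the one I would actually write out — is the bookkeeping of the distortion parameter: $\eta$ has to be chosen as a function of $\varepsilon$ and $\delta$ so that the composition of the near-isometry $T$ with the renormalization by $(1+\eta)^{-1}$ simultaneously keeps each $z_i$ at distance at least $2-\varepsilon$ from $x$ and keeps the convex combination within $\delta$ of $y$. None of this is deep. I would also remark that the argument is not special to the bidual: the same reasoning shows that whenever $Y$ is an almost isometric ideal in a Banach space $Z$ and $x,y\in S_Y$, then $y\in\clco\Delta_\varepsilon^Y(x)$ for all $\varepsilon>0$ if and only if $y\in\clco\Delta_\varepsilon^Z(x)$ for all $\varepsilon>0$, the lemma being the instance $Z=X^{**}$ (recalling that $X$ is always an almost isometric ideal in $X^{**}$, which is exactly the content of the principle of local reflexivity).
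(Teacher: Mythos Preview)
Your proof is correct and follows essentially the same approach as the paper: apply the principle of local reflexivity to the finite-dimensional span of $x$, $y$, and the witnessing bidual elements to pull the convex combination back into $X$, noting that $T$ fixes $x$ and $y$. Your version is in fact slightly more careful than the paper's, explicitly renormalizing by $(1+\eta)^{-1}$ to ensure $z_i\in B_X$ and starting from $\varepsilon/2$; the almost isometric ideal generalization you mention is exactly the content of the paper's remark following the lemma.
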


\begin{proof}
  \ref{item:Dai-1} $\Rightarrow$ \ref{item:Dai-2}
  is trivial as $\Delta^X_\varepsilon(x) \subset \Delta^{X^{**}}_\varepsilon(x)$.

  \ref{item:Dai-2} $\Rightarrow$ \ref{item:Dai-1}.
  Let $\varepsilon > 0$ and $\delta > 0$.
  Find $y_n^{**} \in B_{X^{**}}$ such that
  $\|x - y_n^{**}\| \geq 2 - \varepsilon$ and
  $\|y - \sum_{n=1}^m \lambda_n y_n^{**}\| < \delta$.

  Define $E := \linspan\{x,y,y_n^{**}\}$.
  Let $\eta > 0$ and
  use the principle of local reflexivity
  to find $T: E \to X$ such that
  \begin{itemize}
  \item[(i)] $T(e) = e$ for all $e \in E \cap X$.
  \item[(ii)] $(1-\eta)\|e\| \le \|Te\| \le (1+\eta)\|e\|$.
  \end{itemize}
  Then $\|x - T y_n^{**}\| = \|T(x - y_n^{**})\|
  \ge (1-\eta)\|x - y_n^{**}\| > 2 - \varepsilon$
  if $\eta$ is small enough.
  Also, if $\eta$ is small enough,
  \begin{equation*}
    \|y - \sum_{n=1}^m \lambda_n T y_n^{**}\|
    \le (1+\eta)
    \|y - \sum_{n=1}^m \lambda_n y_n^{**}\|
    < \delta.
  \end{equation*}
\end{proof}

\begin{rem}
  \label{rem:ai-ideal-Deltaset}
  The argument shows that the conclusion in
  Lemma~\ref{lem:Daug-ai-ideal} also holds in the more general setting
  of $X$ being an almost isometric ideal (see \cite{ALN2} for a
  definition) in $Z$, replacing $X^{**}$ with $Z$.
\end{rem}

\begin{thm}\label{thm:Lindenstrauss-D-pt}
  Let $X$ be an (infinite dimensional)
  $L_1(\mu)$-predual and $x \in S_X$.
  The following assertions are equivalent:
  \begin{enumerate}
  \item\label{item:L-D-1}
    $x$ is a $\Delta$-point;
  \item\label{item:L-D-2}
    $x$ is a Daugavet point.
  \end{enumerate}
  %
\end{thm}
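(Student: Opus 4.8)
The implication \ref{item:L-D-2}$\Rightarrow$\ref{item:L-D-1} is immediate from the definitions, so all the content is in the converse, and the plan is to push the problem up into the bidual and invoke the $C(K)$-characterization already established. Since $X$ is an $L_1(\mu)$-predual it is a Lindenstrauss space, so $X^{**}$ is isometric to $C(K)$ for some compact Hausdorff space $K$ by \cite[Theorem~6.1]{LinMem}; as $X$, and hence $X^{**}$, is infinite dimensional, $C(K)$ is infinite dimensional, so $K$ is infinite.

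Assume $x\in S_X$ is a $\Delta$-point of $X$, i.e. $x\in\clco\Delta_\varepsilon^X(x)$ for every $\varepsilon>0$. First I would apply Lemma~\ref{lem:Daug-ai-ideal} with $y=x$ to get $x\in\clco\Delta_\varepsilon^{X^{**}}(x)$ for every $\varepsilon>0$; that is, $x$ is a $\Delta$-point of the space $X^{**}=C(K)$. Since $K$ is infinite compact Hausdorff, Theorem~\ref{thm:CK-Daug-Delta-limit} applies to $X^{**}$ and tells us that this $\Delta$-point is in fact a Daugavet-point of $C(K)$, so $B_{X^{**}}=\clco\Delta_\varepsilon^{X^{**}}(x)$ for every $\varepsilon>0$.

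It remains to transfer this back to $X$. Given any $y\in S_X$, viewed as an element of $B_{X^{**}}$, we have $y\in\clco\Delta_\varepsilon^{X^{**}}(x)$ for every $\varepsilon>0$; a second application of Lemma~\ref{lem:Daug-ai-ideal}, now with this $y$, gives $y\in\clco\Delta_\varepsilon^X(x)$ for every $\varepsilon>0$. Hence $S_X\subseteq\clco\Delta_\varepsilon^X(x)$, and since $\clco\Delta_\varepsilon^X(x)$ is closed and convex while $B_X=\clco(S_X)$ (as $X$ is infinite dimensional), we obtain $B_X\subseteq\clco\Delta_\varepsilon^X(x)$; the reverse inclusion is clear because $\Delta_\varepsilon^X(x)\subseteq B_X$. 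Thus $B_X=\clco\Delta_\varepsilon^X(x)$ for every $\varepsilon>0$, i.e. $x$ is a Daugavet-point of $X$.

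The proof is essentially a bookkeeping exercise chaining together Lemma~\ref{lem:Daug-ai-ideal} (which, via the principle of local reflexivity, does the real two-way transfer between $X$ and $X^{**}$) and Theorem~\ref{thm:CK-Daug-Delta-limit}, so I do not anticipate a serious obstacle. The only points requiring a little care are verifying that $X^{**}$ is a $C(K)$ space with $K$ infinite, so that Theorem~\ref{thm:CK-Daug-Delta-limit} is genuinely applicable, and the elementary upgrade from ``$S_X\subseteq\clco\Delta_\varepsilon^X(x)$'' to ``$B_X=\clco\Delta_\varepsilon^X(x)$'', which uses $B_X=\clco(S_X)$.
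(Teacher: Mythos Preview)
Your proof is correct and follows essentially the same approach as the paper's: transfer to $X^{**}=C(K)$ via Lemma~\ref{lem:Daug-ai-ideal}, apply Theorem~\ref{thm:CK-Daug-Delta-limit}, and transfer back. You even supply details the paper leaves implicit, namely that $K$ is infinite and the passage from $S_X\subseteq\clco\Delta_\varepsilon^X(x)$ to $B_X=\clco\Delta_\varepsilon^X(x)$, since Lemma~\ref{lem:Daug-ai-ideal} as stated applies only to $y\in S_X$.
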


\begin{proof}
  \ref{item:L-D-1} $\Rightarrow$ \ref{item:L-D-2}.
  By Lemma~\ref{lem:Daug-ai-ideal}
  we get
  $x \in \clco \Delta^{X^{**}}_\varepsilon(x)$
  for all $\varepsilon > 0$.
  Since $X^{**}$ is isometric to a $C(K)$-space,
  we get from Theorem~\ref{thm:CK-Daug-Delta-limit} that
  $x$ is a Daugavet-point in $X^{**}$,
  that is,
  $B_{X^{**}} = \clco \Delta^{X^{**}}_\varepsilon(x)$
  for all $\varepsilon > 0$.
  Using Lemma~\ref{lem:Daug-ai-ideal}
  again we get the desired conclusion.

  \ref{item:L-D-2} $\Rightarrow$ \ref{item:L-D-1}
  is trivial.
\end{proof}

\begin{thm}\label{thm:Daugavet-eq-dirk-prop-L1predual}
  Let $X$ be an $L_1(\mu)$-predual.
  The following assertions are equivalent:
  \begin{enumerate}
  \item\label{item:Linden-dirkP-1}
    $\|Id - P\| = 2$ for all norm-1 rank-1 projections $P$ on $X$;
  \item\label{item:Linden-dirkP-2}
    $X$ has the Daugavet property.
  \end{enumerate}
\end{thm}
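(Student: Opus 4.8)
The plan is to establish the nontrivial implication \ref{item:Linden-dirkP-1} $\Rightarrow$ \ref{item:Linden-dirkP-2} by showing that property \ref{item:Linden-dirkP-1} forces \emph{every} point of $S_X$ to be a $\Delta$-point; then Theorem~\ref{thm:Lindenstrauss-D-pt} turns each such point into a Daugavet-point, and by Proposition~\ref{prop:daug-char} this means $X$ has the Daugavet property. The converse implication \ref{item:Linden-dirkP-2} $\Rightarrow$ \ref{item:Linden-dirkP-1} is immediate, since the Daugavet property gives $\|Id - T\| = 1 + \|T\|$ for every rank-1 operator $T$, in particular $\|Id - P\| = 2$ for every norm-1 rank-1 projection $P$.

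So assume \ref{item:Linden-dirkP-1}. Two preliminary observations: by Proposition~\ref{prop:dirk-prop-implies-LD2Ps} the space $X$ has the LD2P, hence is infinite-dimensional (so that Theorem~\ref{thm:Lindenstrauss-D-pt} applies); and, since $X$ is a Lindenstrauss space, $X^{**}$ is isometric to $C(K)$ for some (extremally disconnected) infinite compact Hausdorff space $K$, with $X$ sitting inside $X^{**} = C(K)$ via the canonical embedding. Suppose now, towards a contradiction, that some $x_0 \in S_X$ fails to be a $\Delta$-point. By Lemma~\ref{lem:Daug-ai-ideal}, $x_0$ is then not a $\Delta$-point of $X^{**} = C(K)$ either, so Theorem~\ref{thm:CK-Daug-Delta-limit} yields $|x_0(t)| < 1$ for every limit point $t$ of $K$. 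Running the argument from the proof of \ref{item:CK-DDlimit-2} $\Rightarrow$ \ref{item:CK-DDlimit-3} in Theorem~\ref{thm:CK-Daug-Delta-limit}, the set $H = \{t \in K : |x_0(t)| = 1\}$ is a finite, nonempty, (cl)open set of isolated points of $K$, and the functional $\mu = \frac{1}{|H|}\sum_{h \in H}(\sign x_0(h))\,\delta_h \in S_{C(K)^*}$ satisfies $\langle \mu, x_0\rangle = 1$ and $\|Id_{X^{**}} - P\| < 2$ for the norm-1 rank-1 projection $P = \mu \otimes x_0$ on $C(K)$.

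It remains to transport $P$ down to $X$. Since $x_0 \in X$ and $\langle \mu, x_0\rangle = 1$, the restriction $x^* := \mu|_X$ lies in $S_{X^*}$ with $x^*(x_0) = 1$, so $Q := x^* \otimes x_0$ is a norm-1 rank-1 projection on $X$. For $y \in B_X \subseteq B_{X^{**}}$ one has $(Id_X - Q)(y) = y - \langle \mu, y\rangle x_0 = (Id_{X^{**}} - P)(y)$, and taking the supremum over such $y$ gives $\|Id_X - Q\| \le \|Id_{X^{**}} - P\| < 2$, contradicting \ref{item:Linden-dirkP-1}. Hence every $x \in S_X$ is a $\Delta$-point, which finishes the proof. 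This argument is largely a reassembly of earlier results, so there is no real obstacle beyond bookkeeping; the only two points that merit genuine attention are the verification that $H$ is finite and (cl)open — which is exactly what was checked inside the proof of Theorem~\ref{thm:CK-Daug-Delta-limit} — and the (routine) check that the $C(K)$-projection restricts to a norm-1 rank-1 projection on $X$ without increasing $\|Id - P\|$.
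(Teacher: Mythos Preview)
Your argument is correct, and it takes a genuinely different route from the paper. The paper proves \ref{item:Linden-dirkP-1} $\Rightarrow$ \ref{item:Linden-dirkP-2} by invoking Proposition~\ref{prop:dirk-prop-implies-LD2Ps} to deduce that $X^*$ has the $w^*$-LD2P, then identifies this with $X$ having extremely rough norm and cites the external result of Becerra~Guerrero and Mart\'{\i}n that extremely rough norm characterizes the Daugavet property among Lindenstrauss spaces. You instead stay entirely within the machinery developed in the paper: you pass to $X^{**}=C(K)$ via Lemma~\ref{lem:Daug-ai-ideal}, invoke the explicit projection constructed in the proof of Theorem~\ref{thm:CK-Daug-Delta-limit} to contradict \ref{item:Linden-dirkP-1} pointwise, and then close with Theorem~\ref{thm:Lindenstrauss-D-pt}. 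The paper's route is quicker on the page but outsources the heavy lifting to \cite{MR2326380}; your route is self-contained and makes transparent exactly where the $C(K)$ structure of the bidual is used. The only step worth a second glance is the restriction of $P=\mu\otimes x_0$ to $X$, and you handle it correctly: $\|x^*\|\le\|\mu\|=1$ while $x^*(x_0)=1$ forces $\|x^*\|=1$, and $(Id_X-Q)y=(Id_{X^{**}}-P)y$ for $y\in X$ gives the norm bound.
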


\begin{proof}
  \ref{item:Linden-dirkP-2} $\Rightarrow$ \ref{item:Linden-dirkP-1}
  is trivial.

  \ref{item:Linden-dirkP-1} $\Rightarrow$ \ref{item:Linden-dirkP-2}.
  If $\|Id - P\| = 2$ for all norm-1 rank-1 projections,
  then $X^*$ has the $w^*$-LD2P by
  Proposition~\ref{prop:dirk-prop-implies-LD2Ps}
  which is equivalent to $X$ having extremely rough norm.
  By \cite[Theorem~2.4]{MR2326380}
  this implies the Daugavet property
  for $L_1(\mu)$-predual spaces.
\end{proof}
\subsection{M\"{u}ntz space}
\label{sec:muntz-space}
Now we explore $\Delta$- and Daugavet-points in the setting of
M\"{u}ntz spaces. Let us first clarify what we mean by such spaces.
\begin{defn} \label{defn: Mutnz space}
  Let $\Lambda = (\lambda_n)_{n=0}^\infty$
  be an increasing sequence of non-negative real numbers
  \begin{equation*}
    0 = \lambda_0 < \lambda_1 < \dotsb < \lambda_n < \dotsb
  \end{equation*}
  such that $\sum_{i=1}^{\infty} \frac{1}{\lambda_i} < \infty$.
  Then
  $M(\Lambda) := \cspan \{t^{\lambda_n}\}_{n=0}^\infty \subset C[0,1]$
  is called the M\"{u}ntz space associated with $\Lambda$.

  We will sometimes need to exclude the constants
  and consider the subspace
  $M_0(\Lambda) := \cspan \{t^{\lambda_n}\}_{n=1}^\infty$
  of $M(\Lambda)$.
\end{defn}

In order to prove a result about the Daugavet points in M\"{u}ntz
spaces, we need the following result.

\begin{lem}\label{lem:spike_functions}
  For all $\varepsilon > 0$ and $\delta > 0$,
  there exist $k,l \in \mathbb{N}$ with $k < l$
  such that for
  $f = (t^{\lambda_k} - t^{\lambda_l})/
  \|t^{\lambda_k} - t^{\lambda_l}\|$
  one has $f \ge 0$ and
  $\left. f \right|_{[0,1-\varepsilon]} < \delta$.
\end{lem}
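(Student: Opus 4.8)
The plan is to observe that, for any admissible $k<l$, the sign requirement comes for free, and then to produce the concentration near $t=1$ simply by letting the smaller exponent $\lambda_k$ tend to infinity. Indeed, for $t\in[0,1]$ and $\lambda_k<\lambda_l$ one has
\[
  t^{\lambda_k}-t^{\lambda_l}=t^{\lambda_k}\bigl(1-t^{\lambda_l-\lambda_k}\bigr)\ge 0,
\]
so $f\ge 0$ holds no matter how $k<l$ are chosen, and it remains only to make $f$ small on $[0,1-\varepsilon]$. Write $g:=t^{\lambda_k}-t^{\lambda_l}$, so that $f=g/\|g\|_\infty$ and $\|g\|_\infty=\max_{[0,1]}g$.

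For the upper estimate, since $s\mapsto s^{\lambda_k}$ is increasing on $[0,1]$, for every $t\in[0,1-\varepsilon]$ we have $0\le g(t)\le t^{\lambda_k}\le(1-\varepsilon)^{\lambda_k}$. For a lower bound on $\|g\|_\infty$ I would evaluate $g$ at the point $s:=2^{-1/\lambda_k}\in(0,1)$, chosen so that $s^{\lambda_k}=\tfrac12$; then
\[
  \|g\|_\infty\ge g(s)=\tfrac12-2^{-\lambda_l/\lambda_k},
\]
and if $l$ is taken large enough that $\lambda_l\ge 2\lambda_k$, the right-hand side is at least $\tfrac14$. Combining the two estimates gives $f(t)\le 4(1-\varepsilon)^{\lambda_k}$ for all $t\in[0,1-\varepsilon]$.

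To finish, recall that the Müntz condition $\sum_i 1/\lambda_i<\infty$ forces $\lambda_n\to\infty$. Hence I first choose $k$ so large that $(1-\varepsilon)^{\lambda_k}<\delta/4$, and then choose $l>k$ with $\lambda_l\ge 2\lambda_k$; for this pair one has $f\ge 0$, $\|f\|_\infty=1$, and $\left.f\right|_{[0,1-\varepsilon]}<\delta$, as required. There is no real obstacle here; the only step requiring a small idea is the lower bound on $\|g\|_\infty$, where evaluating at $s=2^{-1/\lambda_k}$ reduces matters to the crude inequality $\lambda_l/\lambda_k\ge 2$, which is available precisely because $\lambda_n\to\infty$.
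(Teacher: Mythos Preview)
Your proof is correct and follows essentially the same approach as the paper: bound the numerator on $[0,1-\varepsilon]$ by $(1-\varepsilon)^{\lambda_k}$, bound the denominator $\|t^{\lambda_k}-t^{\lambda_l}\|$ below by a fixed constant, and then use $\lambda_n\to\infty$. The only cosmetic difference is that the paper simply asserts one can choose $l>k$ with $\|t^{\lambda_k}-t^{\lambda_l}\|>1/2$, whereas you supply an explicit test point $s=2^{-1/\lambda_k}$ together with the condition $\lambda_l\ge 2\lambda_k$ to obtain $\|g\|_\infty\ge 1/4$; both arrive at the same conclusion.
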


\begin{proof}
  Fix positive numbers $\varepsilon$ and $\delta$. Let $k$ be such that
  \begin{equation*}
    t^{\lambda_k}|_{[0,1-\varepsilon]}<\dfrac\delta2.
  \end{equation*}
  Choose $l>k$ such that $\|t^{\lambda_k}-t^{\lambda_l}\|>1/2$.
  Then
  \begin{equation*}
    \dfrac{t^{\lambda_k}-t^{\lambda_l}}{\|t^{\lambda_k}-t^{\lambda_l}\|}
    <
    \dfrac{\delta/2}{1/2}
    =
    \delta
  \end{equation*}
  for any $t\in[0,1-\varepsilon]$.
\end{proof}
\begin{prop}\label{f at the end 1 is Daugavet point}
  Let $X = M(\Lambda)$ or $X = M_0(\Lambda)$.
  If $f \in S_X$ satisfies $f(1) = \pm 1$,
  then $f$ is a Daugavet point.
\end{prop}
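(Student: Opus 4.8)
The approach is to verify criterion \ref{bbbb} of Lemma~\ref{Daugavet point crit}: given an arbitrary slice $S=S(x^*,\alpha)$ of $B_X$, with $x^*\in S_{X^*}$ and $\alpha>0$, and given $\varepsilon>0$, I want to produce $z\in S$ with $\|f-z\|>2-\varepsilon$. Since $\Delta_\varepsilon(-f)=-\Delta_\varepsilon(f)$, replacing $f$ by $-f$ does not affect being a Daugavet point, so I may and will assume $f(1)=1$. The only ingredient special to M\"untz spaces that I will need is Lemma~\ref{lem:spike_functions}, which for each $\eta,\delta>0$ supplies a norm-one spike $g=(t^{\lambda_k}-t^{\lambda_l})/\|t^{\lambda_k}-t^{\lambda_l}\|\ge 0$ with $g<\delta$ on $[0,1-\eta]$; since $g(1)=0$ and $\|g\|_\infty=1$, the maximum of $g$ is attained at some $t_g\in(1-\eta,1)$. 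Note $g\in M_0(\Lambda)\subseteq M(\Lambda)$, so the proof is literally the same for $X=M(\Lambda)$ and $X=M_0(\Lambda)$.

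First I would fix $h\in B_X$ with $x^*(h)>1-\alpha/2$. Using continuity of $f$ and $h$ at $1$, I then choose $\eta>0$ so small that $|f(t)-1|$ and $|h(t)-h(1)|$ are both less than a prescribed small number $\rho$ for all $t\in[1-\eta,1]$, and I choose $\delta>0$ small; let $g$ be the resulting spike from Lemma~\ref{lem:spike_functions}, with peak $t_g\in(1-\eta,1)$. Set
\[
  y := h-(h(1)+1)\,g \in X .
\]
Because $0\le h(1)+1\le 2$ and $g\ge 0$ we have $y\le h\le 1$; on $[0,1-\eta]$ also $|y|\le 1+2\delta$; and on $[1-\eta,1]$, writing $h(t)=h(1)+(h(t)-h(1))$, the number $h(1)-(h(1)+1)g(t)$ lies in $[-1,h(1)]\subseteq[-1,1]$ (the image of $[0,1]$ under $s\mapsto h(1)-(h(1)+1)s$), so $|y(t)|\le 1+\rho$ there. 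Thus $\|y\|\le 1+\max(2\delta,\rho)$, and $z:=y/(1+\max(2\delta,\rho))\in B_X$.

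Finally I would check that $z\in S$ and $\|f-z\|>2-\varepsilon$. For the first, $x^*(y)=x^*(h)-(h(1)+1)\,x^*(g)$, and here I use that $x^*(g)$ can be made arbitrarily small: as $\eta,\delta\downarrow 0$ the spikes of Lemma~\ref{lem:spike_functions} tend to $0$ pointwise on $[0,1]$ while staying bounded by $1$ in sup-norm, hence converge weakly to $0$ in $C[0,1]$, so $x^*(g)\to 0$; therefore, with $\eta,\delta,\rho$ small enough, $x^*(z)>1-\alpha$, i.e. $z\in S$. For the second, $y(t_g)=h(t_g)-h(1)-1$, which is within $\rho$ of $-1$, so $z(t_g)$ is within $O(\rho+\delta)$ of $-1$; and $f(t_g)$ is within $\rho$ of $1$ since $t_g\in(1-\eta,1)$; hence $\|f-z\|\ge|f(t_g)-z(t_g)|>2-\varepsilon$ once the parameters are small enough. (Equivalently, one runs the construction along sequences $\eta_n,\delta_n\downarrow 0$ and takes $n$ large.) By Lemma~\ref{Daugavet point crit}, $f$ is then a Daugavet point.

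The main difficulty — and the reason $y$ has exactly this form — is the clash between wanting $y\approx -1$ near $t=1$, where $f\approx 1$, and needing $y\in B_X$: $h$ is an arbitrary ball element and may itself be near $-1$ close to $t=1$, so naively subtracting a positive spike of height $2$ would push $y$ down near $-3$. The remedy is to shrink the support $[1-\eta,1]$ of the spike until $h$ is essentially constant (equal to $h(1)$) there; on that interval $y$ is then, up to the error $\rho$, a point of the segment from $h(1)$ to $-1$, hence stays in $[-1,1]$. The only other subtle point is $x^*(g)\to 0$, for which it is essential that the spike \emph{vanishes} at $1$ (so that a possible atom of the representing measure at $1$ does no harm) and is concentrated on shrinking neighbourhoods of $1$.
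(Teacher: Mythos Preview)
Your argument is correct, and it takes a genuinely different route from the paper.  Both proofs rest on the same key construction --- perturbing a target element $h$ (your notation; the paper writes $g$) by $(h(1)+1)$ times a spike from Lemma~\ref{lem:spike_functions}, and checking via the convex-combination identity $(1-g(t))h(1)+g(t)(-1)$ that the result stays essentially in $B_X$.  The difference is which characterization of Daugavet points is verified and, consequently, how many spikes are needed.  The paper establishes $B_X=\clco\Delta_\varepsilon(f)$ directly: it manufactures $m$ spikes $f_1,\dots,f_m$ with approximately disjoint supports in a neighbourhood of $1$, sets $g_i=g-(g(1)+1)f_i$, and shows that $\frac{1}{m}\sum g_i$ is close to $g$ because $\sum f_i$ has sup-norm roughly $1$, so the perturbations average away.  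You instead verify the slice criterion of Lemma~\ref{Daugavet point crit}\ref{bbbb} with a \emph{single} spike; the point that makes this work --- that the perturbation does not push $z$ out of the slice --- is your observation that the spikes are bounded and tend to $0$ pointwise on $[0,1]$ (crucially also at $t=1$), hence converge weakly to $0$ in $C[0,1]$, so $x^*(g)\to 0$ after a Hahn--Banach extension of $x^*$ to $C[0,1]^*$.  Your approach avoids the disjoint-support bookkeeping and the iteration of Lemma~\ref{lem:spike_functions}, at the price of a soft (nonconstructive) limiting argument; the paper's averaging gives explicit convex combinations with quantitative error bounds.
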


\begin{proof}
  Fix $f\in S_X$ with $f(1) = \pm 1$ and $\varepsilon > 0$.
  We show that any $g \in S_X$ can be approximated by the
  elements of $\conv \Delta_\varepsilon(f)$.
  For this purpose, fix $g\in S_X$, $\delta > 0$ and
  choose $m \in \mathbb{N}$ with $m \geq 2/\delta$.

  Let $t_1 \in (0,1)$ be such that
  $|f(1) - f(t)| < \delta$ and $|g(1) - g(t)| < \delta$
  for all $t \in [t_1,1]$.
  We use Lemma~\ref{lem:spike_functions} to obtain
  $f_1$ such that $\left. f_1 \right|_{[0,t_1]} < \delta/2$.

  Let $t_2 \in (0,1)$ be such that
  $\left. f_1 \right|_{[t_2,1]} < \delta/2$.
  We use Lemma~\ref{lem:spike_functions} again
  to obtain $f_2$ such that $\left. f_2 \right|_{[0,t_2]} < \delta/2$.

  We continue finding $t_0 < t_1 < \cdots < t_m < t_{m+1} =: 1$
  and $f_1,\dotsc,f_m$.
  Define $g_i := g - [g(1)+1]f_i$ for $i = 1,\dotsc,m$.
  Then $\|g_i\| \leq 1 + \delta$. Indeed, for
  $t\in [0,1] \setminus [t_i, t_{i+1}]$
  we have that $f_i(t) < \delta/2$ and therefore
  \begin{equation*}
    |g_i(t)| \le |g(t)| + (1 + g(1))f_i(t)
    < 1 + 2\frac{\delta}{2} = 1 + \delta,
  \end{equation*}
  while for
  $t \in [t_i,t_{i+1}]$ we have
  \begin{align*}
    |g_i(t)| &\le |g(1) - [g(1)+1]f_i(t)| + |g(t) - g(1)| \\
    &\le |g(1)| (1-f_i(t)) + f_i(t) + \delta\\
    &\le 1 - f_i(t) + f_i(t) + \delta = 1 + \delta.
  \end{align*}
  
  Denote by $s_i$ the unique
  point in $(t_i,t_{i+1})$ where $f_i(s_i) = 1$.
  We have
  \begin{align*}
    \|g_i - f\| &\ge |g_i(s_i) - f(s_i)|\\
    & = |(g(s_i) - (g(1) + 1)) - f(s_i)| \\
    &\ge |1 + f(s_i)| - |g(1)-g(s_i)| \\
    &\ge 2 - \delta - \delta
    = 2 - 2\delta.
  \end{align*}
  Hence
  \begin{equation*}
    \|(1+\delta)^{-1}g_i - f\|
    \ge \|g_i - f\| - \|(1+\delta)^{-1} g_i - g_i\|
    \ge 2 - 3 \delta
  \end{equation*}
  since
  \begin{equation*}
    \|(1+\delta)^{-1} g_i - g_i\|
    = |(1+\delta)^{-1} - 1|\|g_i\|
    \le |(1+\delta)^{-1} - 1|(1+\delta)
    \le \delta.
  \end{equation*}
  We get that $(1+\delta)^{-1}g_i \in \Delta_\varepsilon(f)$
  whenever $3\delta < \varepsilon$.
  Finally
  \begin{align*}
    \|g - \sum_{i=1}^m \frac{1}{m}(1+\delta)^{-1} g_i\|
    &=
      \|(1-(1+\delta)^{-1})g + (1+\delta)^{-1}[g(1)+1]
      \sum_{i=1}^m \frac{1}{m} f_i \|
    \\
    &\le
      \frac{\delta}{1+\delta}\|g\|
      +
      \frac{(g(1)+1)}{m(1+\delta)}
      \| \sum_{i=1}^m f_i \|
    \\
    &\le
      \frac{\delta}{1+\delta}
      +
      \frac{2}{m}
      ( 1 + (m-1)\frac{\delta}{2} )
      \\
    &\le \delta + \delta + \delta
      \le 3\delta.
  \end{align*}
  Hence $g \in \clco \Delta_\varepsilon(f)$.
\end{proof}
\begin{prop}\label{prop:muntz-delta}
  Let $X$ be a M{\"u}ntz space $M_0(\Lambda)$ with $\lambda_1 \ge 1$.
  If $f \in S_X$ with $|f(1)| < 1$,
  then $f \notin \Delta$.
\end{prop}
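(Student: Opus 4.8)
The plan is to show that if $|f(1)| < 1$, then there is a norm-one rank-one projection $P = \mu \otimes f$ with $\|\mathrm{Id} - P\| < 2$, so that $f$ fails the criterion in Lemma~\ref{Delta point crit}~\ref{ccc}. Equivalently, by Lemma~\ref{Delta point crit}~\ref{bbb}, I would produce a slice $S$ of $B_X$ containing $f$ such that every $g \in S$ satisfies $\|f-g\| \le 2 - \varepsilon$ for some fixed $\varepsilon > 0$. The intuition is the same as in the proof of \ref{item:CK-DDlimit-2}~$\Rightarrow$~\ref{item:CK-DDlimit-3} of Theorem~\ref{thm:CK-Daug-Delta-limit}: $\Delta$-points of $C(K)$-type spaces must attain their norm at a limit point, and in $M_0(\Lambda)$ the only ``limit point'' behaviour available at the sup-norm level is at $t=1$, since a Müntz function with $\lambda_1 \ge 1$ vanishes at $0$ and, being smooth on $[0,1)$, cannot oscillate to modulus $1$ on a sequence accumulating in the interior without contradicting analyticity/monotonicity near the endpoint.

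First I would record the structural facts about $M_0(\Lambda)$ with $\lambda_1 \ge 1$: every $f \in X$ satisfies $f(0) = 0$, and (by the classical Müntz/Clarkson–Erdős theory, or by comparison with $c$ via \cite{werner-muntz}, \cite{M1}) $f$ is continuously differentiable on $[0,1)$; more importantly, one has a ``bounded Markov'' or endpoint-clustering phenomenon saying that the set where $|f|$ is near its maximum, if the maximum is not attained at $1$, is ``isolated'' in a quantitative sense. Concretely, I would argue that if $|f(1)| < 1$ then $\|f\| = 1$ is attained on a compact subset $H \subset [0,1)$, and there is $\delta > 0$ and a neighbourhood structure making $H$ behave like the finite set $H$ in the $C(K)$ proof. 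The key point to extract is: there exist finitely many points $s_1, \dots, s_N \in [0,1)$ and signs $\varepsilon_j = \operatorname{sign} f(s_j)$, together with $\delta > 0$, such that $|f(t)| \le 1 - \delta$ off small disjoint neighbourhoods of the $s_j$. If $|f|$ attains $1$ on an interval rather than at points, I would instead pick any point of that interval and still get a defining functional; the essential dichotomy is interior-vs-endpoint, not isolated-vs-not.

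Given such $s_1, \dots, s_N$, I would set $\mu = \frac{1}{N}\sum_{j=1}^N \varepsilon_j \delta_{s_j} \in S_{C[0,1]^*}$, restrict it to $X$, note $\langle \mu, f\rangle = 1$ and $\|\mu|_X\| = 1$ (it is $\le 1$ trivially and $\ge 1$ by testing on $f$), so $P := \mu \otimes f$ is a norm-one rank-one projection on $X$. Then for $g \in B_X$ I estimate $\|(\mathrm{Id}-P)g\| = \sup_{t}|g(t) - \langle\mu,g\rangle f(t)|$ exactly as in Theorem~\ref{thm:CK-Daug-Delta-limit}: for $t$ outside the neighbourhoods of the $s_j$ one gets $\le 1 + (1-\delta) = 2-\delta$ using $|f(t)|\le 1-\delta$; for $t$ near some $s_j$ one writes $\langle\mu,g\rangle f(t) = \frac{1}{N}\sum_i \varepsilon_i g(s_i) f(t)$, uses $\varepsilon_j f(s_j) = 1$ to split off the $i=j$ term, and bounds by $(1-\frac1N) + \frac{N-1}{N} = 2 - \frac{2}{N}$. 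With $\eta := \min\{\delta, 2/N\}$ this gives $\|\mathrm{Id} - P\| \le 2 - \eta < 2$, so $f$ is not a $\Delta$-point by Lemma~\ref{Delta point crit}.

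The main obstacle is the structural claim that $|f(1)| < 1$ forces the norm-attainment set of $f$ to be a finite (or at least ``endpoint-free, isolated-in-$[0,1)$'') set with a uniform gap $\delta$ — i.e., the Müntz analogue of ``limit points of $K$''. This needs either the $C([0,\omega]) = c$-embedding of $M_0(\Lambda)$ together with a transfer argument of Daugavet/$\Delta$-point behaviour through the almost-isometric embedding (cf. Remark~\ref{rem:ai-ideal-Deltaset}, Lemma~\ref{lem:Daug-ai-ideal}), or a direct real-variable argument using that Müntz functions with $\lambda_1\ge1$ extend analytically and their oscillation near $1$ is controlled. I would favour the transfer route: realize $M_0(\Lambda)$ as an almost isometric ideal in $c \cong C([0,\omega])$, apply Theorem~\ref{thm:CK-Daug-Delta-limit} in $C([0,\omega])$ to see that a $\Delta$-point must attain its norm at the unique limit point $\omega$ of $[0,\omega]$, and then identify that this limit-point attainment corresponds precisely to $|f(1)| = 1$ under the embedding; contrapositively $|f(1)|<1$ gives $f\notin\Delta$. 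Verifying that the embedding sends ``value at $1$'' to ``value at $\omega$'' (up to the almost-isometry) is the delicate bookkeeping step.
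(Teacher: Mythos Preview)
Your projection approach has a genuine gap in the key estimate. In the $C(K)$ argument of Theorem~\ref{thm:CK-Daug-Delta-limit}, the points $h\in H$ are \emph{isolated}, so when you evaluate $(Id-P)g$ at $h$ you have $g(h)$ appearing both as the value of $g$ and inside $\langle\mu,g\rangle$, and the cancellation $g(h)-\tfrac1{|H|}\varepsilon_h g(h)f(h)=(1-\tfrac1{|H|})g(h)$ is exact. In $M_0(\Lambda)$ the norm-attainment points $s_j$ lie in the interior of $[0,1]$, so for $t$ merely \emph{near} $s_j$ you must control $|g(t)-g(s_j)|$ uniformly over $g\in B_X$; without that, your bound collapses to $|g(t)|+|\langle\mu,g\rangle||f(t)|\le 2$. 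A uniform Lipschitz estimate on compact subsets of $[0,1)$ is exactly the content of the Bernstein inequality for M\"untz polynomials \cite[Theorem~3.2]{MR1415318}, which you do not invoke. The paper uses precisely this inequality (together with analyticity from the Clarkson--Erd\H{o}s--Schwartz theorem to get finiteness of the norm-attainment set), but rather than building a projection it works directly with convex combinations: if $p\in\Delta_\varepsilon(f)$ is a M\"untz polynomial then the Bernstein bound forces $|f(y_k)-p(y_k)|>1$ at some $y_k$, and a pigeonhole argument over the finitely many $y_k$ shows any average of such $p$'s stays bounded away from $f$.

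Your transfer route through $c$ also does not work as stated. The literature gives that $M_0(\Lambda)$ is almost isometrically isomorphic to a \emph{subspace} of $c$, which is not the same as $M_0(\Lambda)$ being an almost isometric \emph{ideal} in $c$; Remark~\ref{rem:ai-ideal-Deltaset} requires the latter. Even granting some transfer of $\Delta$-points under $(1+\varepsilon)$-isomorphisms, the condition $|f(1)|<1$ versus $|f(1)|=1$ is a boundary case that a near-isometry need not respect, and the identification of ``value at $1$'' with ``value at $\omega$'' is, as you concede, unverified. The direct argument via Bernstein is what actually closes the proof.
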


\begin{proof}
  First note that from the full Clarkson-Erd{\"o}s-Schwartz theorem (see
  \cite{MR1961190}), $f$ is the restriction to $(0,1)$ of
  an analytic function on
  $\Omega = \{x \in \mathbb{C}\setminus (-\infty,0]: |z| < 1\}$.
  Let $I$ be the set of points in $[0,1]$ where $f$ attains its
  norm, and put $I^{\pm} = \{x \in I: f(x) = \pm 1\}$. From the
  assumptions we have $I \subset (0,1)$ since every $g \in M_0(\Lambda)$
  satisfies $g(0) = 0$. 

  Suppose $I$ is infinite. Then either $I^{+}$ or $I^-$ is
  infinite. Suppose without loss of generality that $I^+$ is. Then
  $I^+$ must have an accumulation point $a$ in $[0,1].$ By the
  continuity of $f$ we must have $f(a) = 1$, so
  $0 < a < 1.$ Since $f$ is analytic on $\Omega$ and $I^+$,
  $I^+$ has an accumulation point in $(0,1) \subset \Omega,$ we must
  have $1 - f = 0$ everywhere, which is a contradiction.

  Suppose $I$ is finite and that $f$ attains its norm on
  $(y_k)_{k=1}^m \subset (0,1)$ with
  $0 < y_1 < y_2 < \dotsb < y_m < 1$,
  i.e. $1 = \|f\| = |f(y_k)|$ for every $k=1, \dotsc,m$.
  By density it
  suffices to show that there is $\eps >0$ such that $f \not \in
  \clco(\Delta_\eps(f) \cap P)$ where $P =
  \mbox{span}(t^{\lambda_n})_{n=1}^\infty \subset X$.
  To this end, let $s$ be a point
  satisfying $(1 + y_m)/2 < s < 1$.
  By the Bernstein inequality
  \cite[Theorem~3.2]{MR1415318}, there exists a constant $c =
  c(\Lambda, s)$ such that for any $p \in P$
  \begin{align*}
    \|p'\|_{[0,s]} \le c \|p\|_{[0,1]}.
  \end{align*}

  Since $f \in C[0,1]$ there exists $\delta > 0$
  such that for all $x,y \in [0,1]$
  \begin{equation*}
    |x - y| < \delta \implies
    |f(x) - f(y)| < 1.
  \end{equation*}
  By choosing $\delta$ smaller if necessary
  we may assume that $c \delta < 1/2$
  and that $y_m + \delta/2 < s$.
  Let $I_{k,\delta} := (y_k - \delta/2,y_k + \delta/2)$.
  Note that $f$ does not change sign on any $I_{k,\delta}$.

  Put $I_\delta := \cup_{k=1}^m I_{k,\delta}$, and
  $M := \sup \{|f(y)|: y \in [0,1] \setminus I_\delta\}$.
  Since $[0,1] \setminus I_\delta$ is compact and
  since $f$ is continuous, the value $M$ is attained and thus $M < 1$.
  Let $0 < \eps < \min\{1/(2m), 1 - M, 1/4\}$. Then
  \begin{align*}
    |f(x)| \ge 1 - \eps \implies x
    \in I_\delta.
  \end{align*}
  Assume that $p \in \Delta_{\eps}(f) \cap P$.
  Since $\|f - p\| \ge 2 - \varepsilon$ the norm
  is attained on $I_\delta$.
  Therefore there exist $k$ and $x \in I_{k,\delta}$ such that
  \begin{equation*}
    |f(x) - p(x)| \ge 2 - \varepsilon.
  \end{equation*}
  Since $|f(x)| \ge 1 - \varepsilon$ and
  $f$ does not change sign on $I_{k,\delta}$
  we must have $|f(x) - f(y_k)| \le \varepsilon$
  hence
  \begin{align*}
    |f(y_k) - p(y_k)|
    &\ge
      |f(x) - p(x)| - |f(y_k) - f(x)| - |p(x) - p(y_k)| \\
      &\ge
      2 - 2\varepsilon - \|p'_i\|_{[0,s]}|x - y_k|
      > 3/2 - c\delta > 1.
  \end{align*}
  Now, let $n \in \mathbb{N}$ and
  $p_1, \dotsc, p_n \in \Delta_{\eps}(f) \cap P$.
  Find $r \in \mathbb{N}$ such that
  $(r-1)m < n \le r m$.
  By the pigeonhole principle,
  there is an interval $I_{j,\delta}$ where at least $r$
  of the polynomials $(p_i)_{i=1}^n$ satisfy
  $|f(y_j) - p_i(y_j)| > 1$.
  Put
  \begin{equation*}
    L := \{i \in \{1, \dotsc, n\}:
    |f(y_j) - p_i(x)| > 2 - 2\eps, x \in I_{j,\delta}\}.
  \end{equation*}
  We get that
  \begin{align*}
    |f(y_j) - \frac{1}{n}\sum_{i=1}^n p_i(y_j)|
    &\ge |f(y_j) - \frac{1}{n} \sum_{i \in L} p_i(y_j)|
      - \frac{1}{n}\sum_{i \notin L} |p_i(y_j)|\\
    &>
      1
      -
      \frac{1}{n}\sum_{i \notin L} 1
      \ge
      \frac{r}{n}
      \ge \frac{1}{m}
      > \eps.
  \end{align*}
  Hence $f \notin \clco(\Delta_{\eps}(f) \cap P)$.
\end{proof}
\begin{thm}\label{thm:muntz-DaugDeltaChar}
  Let $X$ be a M{\"u}ntz space $M_0(\Lambda)$ with $\lambda_1 \ge 1.$ The
  following assertions for $f\in S_X$ are equivalent:
  \begin{enumerate}
  \item\label{item:muntz-Daug-1}
    $f$ is a Daugavet point;
  \item\label{item:muntz-Daug-2}
    $f$ is a $\Delta$-point;
  \item\label{item:muntz-Daug-3}
    $\|f\| = |f(1)|.$
  \end{enumerate}
\end{thm}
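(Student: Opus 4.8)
The plan is to read off all three equivalences directly from Proposition~\ref{f at the end 1 is Daugavet point} and Proposition~\ref{prop:muntz-delta}, together with the trivial fact that every Daugavet point is a $\Delta$-point. The one preliminary observation is that since $f \in S_X$ we always have $\|f\| = 1$, so condition \ref{item:muntz-Daug-3} is just a restatement of $|f(1)| = 1$, i.e. $f(1) = \pm 1$ (recall $\|f\| = \sup_{t \in [0,1]}|f(t)| = |f(1)|$ forces $f(1) = \pm 1$ because $|f(1)| \le \|f\| = 1$).

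First I would prove \ref{item:muntz-Daug-3} $\Rightarrow$ \ref{item:muntz-Daug-1}: if $\|f\| = |f(1)|$, then $f(1) = \pm 1$, and Proposition~\ref{f at the end 1 is Daugavet point} (which applies to both $M(\Lambda)$ and $M_0(\Lambda)$, so in particular here) gives at once that $f$ is a Daugavet point. The implication \ref{item:muntz-Daug-1} $\Rightarrow$ \ref{item:muntz-Daug-2} is immediate: $B_X = \clco \Delta_\varepsilon(f)$ for all $\varepsilon > 0$ in particular gives $f \in \clco \Delta_\varepsilon(f)$ for all $\varepsilon > 0$.

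It remains to prove \ref{item:muntz-Daug-2} $\Rightarrow$ \ref{item:muntz-Daug-3}, which I would do by contraposition. If \ref{item:muntz-Daug-3} fails, then since $\|f\| = 1$ we have $|f(1)| < 1$, and Proposition~\ref{prop:muntz-delta} — here is where the standing hypothesis $\lambda_1 \ge 1$ is used, via the Clarkson--Erd\H{o}s--Schwartz theorem and the Bernstein-type inequality in its proof — tells us $f \notin \Delta$, i.e. $f$ is not a $\Delta$-point, so \ref{item:muntz-Daug-2} fails. Chaining the three implications closes the cycle.

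There is no real obstacle in this final assembly: all the analytic difficulty has been isolated in Propositions~\ref{f at the end 1 is Daugavet point} and \ref{prop:muntz-delta}. The only point worth flagging is that Proposition~\ref{prop:muntz-delta} is exactly the place that genuinely needs $\lambda_1 \ge 1$ (and the $M_0(\Lambda)$ setting, so that every element vanishes at $0$), which is why the theorem is stated for this subclass of Müntz spaces rather than for all $M(\Lambda)$.
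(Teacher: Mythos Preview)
Your proof is correct and matches the paper's own argument exactly: the paper also derives \ref{item:muntz-Daug-3} $\Rightarrow$ \ref{item:muntz-Daug-1} from Proposition~\ref{f at the end 1 is Daugavet point}, takes \ref{item:muntz-Daug-1} $\Rightarrow$ \ref{item:muntz-Daug-2} as trivial, and obtains \ref{item:muntz-Daug-2} $\Rightarrow$ \ref{item:muntz-Daug-3} (by contraposition) from Proposition~\ref{prop:muntz-delta}. Your added remarks on where the hypothesis $\lambda_1 \ge 1$ enters are accurate and in line with the paper.
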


\begin{proof}
  \ref{item:muntz-Daug-1} $\Rightarrow$ \ref{item:muntz-Daug-2} is trivial,
  \ref{item:muntz-Daug-2} $\Rightarrow$ \ref{item:muntz-Daug-3}
  follows from Proposition~\ref{prop:muntz-delta},
  and \ref{item:muntz-Daug-3} $\Rightarrow$ \ref{item:muntz-Daug-1}
  is Proposition~\ref{f at the end 1 is Daugavet point}.
\end{proof}

\section{Stability results}
\label{sec:stability-results}
Let us recall that a norm $N$ on $\mathbb{R}^2$
is \emph{absolute} if
\begin{equation*}
  N(a,b) = N(|a|,|b|) \quad \text{for all }
  (a,b) \in \mathbb{R}^2
\end{equation*}
and \emph{normalized} if
\begin{equation*}
  N(1,0) = N(0,1) = 1.
\end{equation*}
If $X$ and $Y$ are Banach spaces and $N$ is an absolute
normalized norm on $\mathbb{R}^2$, then we denote by
$X \oplus_N Y$ the product space $X \times Y$ with
the norm $\|(x, y)\|_N = N(\|x\|,\|y\|)$.

In this section we analyze how
$\Delta$- and Daugavet-points behave while taking direct sums
with absolute normalized norm $N$. First note a useful result that
simplifies the proofs.
\begin{lem}\label{lem:conv-sum-average}
  Let $m \in \mathbb{N}$. Then for all $\varepsilon > 0$,
  all $\lambda_i > 0$ with $\sum_{i=1}^m \lambda_i = 1$,
  there exists $n \in \mathbb{N}$, $k_1,\ldots,k_m \in \mathbb{N}$
  such that
  \begin{equation*}
    \sum_{i=1}^m \left| \lambda_i - \frac{k_i}{n}\right| < \varepsilon
    \qquad \mbox{and} \qquad
    \sum_{i=1}^m k_i = n.
  \end{equation*}
  In particular, every convex combination
  of elements in a normed vector space
  can be approximated arbitrarily
  well with an average of the same elements
  (each repeated $k_i$ times).
  Furthermore, given two such convex combinations,
  we can express them both as an average of
  the same number of elements.
\end{lem}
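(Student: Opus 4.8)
The plan is to produce the rational approximation $(k_i/n)$ to the probability vector $(\lambda_i)$ by a standard rounding argument, and then derive the two "in particular" consequences as immediate corollaries. First I would fix $\varepsilon > 0$ and $\lambda_1,\dots,\lambda_m > 0$ summing to $1$. Choose a large integer $N$ with $m/N < \varepsilon$. For $i = 1,\dots,m-1$ set $k_i := \lfloor \lambda_i N \rfloor$, so that $0 \le \lambda_i N - k_i < 1$, and then set $k_m := N - \sum_{i=1}^{m-1} k_i$ to force $\sum_{i=1}^m k_i = N$ on the nose. One checks $k_m \ge 0$ (indeed $k_m = N - \sum_{i<m} k_i \ge N - \sum_{i<m}\lambda_i N = \lambda_m N > 0$ for $N$ large), and $|\lambda_m N - k_m| = |\sum_{i<m}(k_i - \lambda_i N)| < m-1$. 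Hence $\sum_{i=1}^m |\lambda_i N - k_i| < (m-1) + (m-1) < 2m$, which is not quite the bound wanted; to get the clean estimate I would instead choose $N$ with $2m/N < \varepsilon$ (or refine the rounding so the total rounding error is $< m$), giving $\sum_{i=1}^m |\lambda_i - k_i/N| = \tfrac1N \sum_i |\lambda_i N - k_i| < \varepsilon$. Setting $n := N$ finishes the first assertion.

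For the first "in particular" statement: given a convex combination $\sum_{i=1}^m \lambda_i v_i$ of vectors $v_i$ in a normed space, let $R := \max_i \|v_i\|$ (if all $v_i = 0$ there is nothing to prove). Apply the first part with $\varepsilon$ replaced by $\varepsilon/(R+1)$ to get $n, k_1,\dots,k_m$ with $\sum_i |\lambda_i - k_i/n| < \varepsilon/(R+1)$. Then the average $\frac1n \sum_{i=1}^m k_i v_i$ (i.e.\ the $v_i$'s each repeated $k_i$ times, averaged over the $n$ copies) satisfies
\begin{equation*}
  \Bigl\| \sum_{i=1}^m \lambda_i v_i - \frac1n \sum_{i=1}^m k_i v_i \Bigr\|
  \le \sum_{i=1}^m \Bigl| \lambda_i - \frac{k_i}{n} \Bigr| \, \|v_i\|
  \le R \sum_{i=1}^m \Bigl| \lambda_i - \frac{k_i}{n} \Bigr| < \varepsilon.
\end{equation*}

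For the last sentence: given two convex combinations, apply the construction to each to get $n_1, (k_i)$ and $n_2, (l_j)$; then replace each $k_i$ by $n_2 k_i$ and each $l_j$ by $n_1 l_j$, and take $n := n_1 n_2$ as the common number of terms — this does not change the averages and expresses both as averages of $n$ elements. Alternatively one can run the first part once with a common target tolerance and a common denominator from the start. I expect the only real point requiring care is the bookkeeping in the rounding step — ensuring $k_m \ge 0$ and that the accumulated rounding error is controlled by a suitable choice of $n$; everything after that is the triangle inequality. There is no serious obstacle here; the lemma is an elementary density statement whose purpose is purely to let later proofs pass freely between convex combinations and equal-weight averages.
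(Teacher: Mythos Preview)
Your argument is correct, but it is genuinely different from the paper's. The paper invokes Dirichlet's simultaneous approximation theorem: given $N$ it produces integers $k_1,\dots,k_m$ and a common denominator $1\le n\le N$ with $|\lambda_i - k_i/n| \le 1/(nN^{1/m})$; summing and using $\sum_i\lambda_i=1$ they get $|n-\sum_i k_i|\le m/N^{1/m}$, so for $N$ large this integer difference is forced to be zero and the $\ell_1$-error is small. Your approach instead fixes $n=N$ in advance, sets $k_i=\lfloor\lambda_i N\rfloor$ for $i<m$, and \emph{defines} $k_m$ to make the sum exact, then bounds the accumulated rounding by $2(m-1)/N$. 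This is strictly more elementary --- no Diophantine approximation needed --- and arguably cleaner for the purpose at hand, since the constraint $\sum_i k_i=n$ is built in rather than deduced. The paper's route gives sharper approximation exponents in principle, but that is irrelevant here; for the applications in Sections~\ref{sec:stability-results} and~\ref{sec:convex-dld2p} only the qualitative density statement matters, and your argument delivers it with less overhead. Your handling of the two ``in particular'' consequences (the triangle inequality for the first, the common-denominator trick $n=n_1n_2$ for the second) is the same in spirit as what the paper leaves implicit.
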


\begin{proof}
  By Dirichlet's approximation theorem given $N \in \mathbb{N}$
  there exist integers $k_1, \ldots, k_m$
  and $1 \le n \le N$ such that
  \begin{equation*}
    \left| \lambda_i - \frac{k_i}{n} \right|
    \le \frac{1}{n N^{1/m}}.
  \end{equation*}
  Then
  \begin{align*}
    \left| n - \sum_{i=1}^m k_i \right|
    &=
      n
      \left|
      \sum_{i=1}^m \lambda_i - \sum_{i=1}^m \frac{k_i}{n}
      \right|
      \le
      n
      \sum_{i=1}^m \frac{1}{n N^{1/m}}
      = \frac{m}{N^{1/m}}.
  \end{align*}
  By just choosing $N$ so large that $N^{-1/m} < \varepsilon$
  and $m N^{-1/m} < 1$ we get the desired conclusion.
  By choosing $\varepsilon > 0$ smaller if necessary
  we can make sure that $k_i \ge 0$ for $i = 1,\dotsc,m$.
\end{proof}

It is not hard to see that if a Banach space $X$ has a $\Delta$-point,
then $X\oplus_N Y$ has a $\Delta$-point too for any Banach space
$Y$. Moreover, if $x\in \Delta_X$ and $y\in \Delta_Y$, then
for all $a,b\geq 0$ with $N(a,b) = 1$ we have $(ax,by)\in
\Delta_Z$ (see the proof of Theorem~\ref{sum has convDLD2P}).
This implies that if $X$ and $Y$ both have the DLD2P
then $X \oplus_N Y$ has the DLD2P for any absolute normalized
norm $N$ on $\mathbb{R}^2$ (this was shown in
\cite{Zbl 1071.46015} using slices). In contrast,
there are absolute normalized norms $N$ for which the space $X\oplus_N
Y$ has no Daugavet-points. Therefore there even exists a space where
every unit sphere point is a $\Delta$-point, but none of them are
Daugavet-points. However, the matter of the existence of Daugavet-points
in direct sums is more complex as can be seen from the
following propositions.

\begin{defn}
  An absolute normalized norm $N$ on $\mathbb{R}^2$
  is \emph{positively octahedral} \cite{HLN} if
  there exist
  $a, b \ge 0$ such that $N(a,b) = 1$, and
  \begin{equation*}
    N\left( (0,1) + (a,b) \right) = 2
    \quad \mbox{and} \quad
    N\left( (1,0) + (a,b) \right) = 2.
  \end{equation*}
\end{defn}

\begin{prop}\label{sum has Daugavet point}
  Let $N$ be a positively octahedral norm on $\mathbb{R}^2$.
  If $X$ and $Y$ are two Banach spaces that both
  have Daugavet-points, then
  $X \oplus_N Y$ also has a Daugavet-point.
\end{prop}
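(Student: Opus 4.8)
The plan is to use the characterization of Daugavet-points in terms of slices from Lemma~\ref{Daugavet point crit}~\ref{bbbb}. Let $x \in S_X$ and $y \in S_Y$ be Daugavet-points, and let $a,b \ge 0$ with $N(a,b) = 1$ witness the positive octahedrality of $N$, so that $N((0,1)+(a,b)) = N((1,0)+(a,b)) = 2$. The natural candidate for a Daugavet-point in $Z := X \oplus_N Y$ is $z_0 := (ax, by)$, which has norm $N(a,b) = 1$. First I would fix an arbitrary slice $S = S(z^*, \eps)$ of $B_Z$ and $\eta > 0$, and aim to produce an element $w \in S$ with $\|z_0 - w\|_N \ge 2 - \eta$.

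The key structural fact about sums is that $(X \oplus_N Y)^* = X^* \oplus_{N'} Y^*$, where $N'$ is the dual norm, which is again absolute and normalized. So $z^* = (x^*, y^*)$ with $N'(\|x^*\|, \|y^*\|) = 1$, and for $(u,v) \in B_Z$ one has $z^*(u,v) = x^*(u) + y^*(v) \le \|x^*\|\|u\| + \|y^*\|\|v\|$. Pick a point $(u_0, v_0) \in S$ that almost realizes the slice; by normalizing the pieces one extracts unit-norm (or near-unit-norm) directions in $X$ and $Y$ along which $x^*$ and $y^*$, respectively, are large, together with the scalars controlling the split between the two coordinates. Then I would split into cases according to which coordinate "carries" the slice. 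The crucial point is that $x$ is a Daugavet-point in $X$, so inside the relevant $X$-slice we can find $\tilde x \in S_X$ with $\|x - \tilde x\| \ge 2 - \eta'$; likewise $\tilde y \in S_Y$ close to distance $2$ from $y$ inside the relevant $Y$-slice. The positive octahedrality is exactly what lets us combine: $\|(ax,by) - (a'\tilde x, b'\tilde y)\|_N$ can be pushed close to $2$ because $N((0,1)+(a,b)) = N((1,0)+(a,b)) = 2$ means a norm-one element concentrated in one coordinate is at distance $2$ from $(a,b)$-weighted elements — and distance $2$ between $x,\tilde x$ in $X$ transfers through the $N$-norm.

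Concretely, I expect the argument to go: given the slice, either we can find $w \in S$ of the form $(\tilde x, 0)$-ish (a near-norm-one $X$-element with $y$-coordinate small) or of the form $(0, \tilde y)$-ish, or a genuine mixture; in the mixed case we use both Daugavet-point properties simultaneously and the triangle-type estimate $N(s,t) + N(s',t') \ge N(s+s', t+t')$ fails in general but $N$ being a norm gives $\|z_0 - w\|_N \ge \big| \|w\|_N - \text{(something)} \big|$ style bounds; more robustly, one estimates $\|z_0 - w\|_N$ from below by testing against a functional. Actually the cleanest route is: first reduce, using Lemma~\ref{lem:conv-sum-average} or direct slice manipulation, to finding $w$ in the slice with one coordinate of $w$ essentially norming $z^*$'s corresponding piece and of norm close to $1$ there (using that $x$, resp. $y$, being a Daugavet-point means its $X$-slices, resp. $Y$-slices, are as rich as possible), then invoke the octahedrality identity to see $\|z_0 - w\|_N \approx 2$.

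The main obstacle I anticipate is the bookkeeping in the mixed case: a slice of $B_Z$ need not be "aligned" with either factor, so one must carefully track the two scalar weights (the split of a ball element between $X$ and $Y$) and argue that at least one of the two octahedrality identities $N((0,1)+(a,b)) = 2$ or $N((1,0)+(a,b)) = 2$ can be brought to bear — equivalently, that we may choose which coordinate of the new point $w$ to make "heavy". One needs that the slice, being nonempty and a slice of the whole unit ball, contains points heavily weighted toward whichever coordinate we prefer, or else contains a genuinely mixed point for which a convexity/averaging argument (Lemma~\ref{lem:conv-sum-average}) reduces it to the pure cases. Handling the dual norm $N'$ and the precise inequality relating $z^*(u,v)$ to the coordinate norms is routine but must be done with care to keep all the $\eps$'s and $\eta$'s consistent. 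Once the right $w \in S$ is identified, the lower bound $\|z_0 - w\|_N \ge 2 - \eta$ follows from the positive octahedrality essentially by definition, and then Lemma~\ref{Daugavet point crit} finishes the proof that $z_0$ is a Daugavet-point in $X \oplus_N Y$.
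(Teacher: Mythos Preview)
Your candidate $(ax,by)$ is right and the slice route can be made to work, but the plan you outline for the mixed case has a real gap. You propose to place the new point $w$ heavily in one coordinate so that one of the two identities $N((a,b)+(1,0))=2$, $N((a,b)+(0,1))=2$ applies. That reduction is not available: if $z^*=(x^*,y^*)$ with $\|x^*\|=c$, $\|y^*\|=d$ and $N'(c,d)=1$, then an element $(\tilde x,0)$ with $\|\tilde x\|=1$ lies in $S(z^*,\varepsilon)$ only when $c>1-\varepsilon$, and neither $c$ nor $d$ need be close to $1$ (for instance $N=\ell_\infty$, $(c,d)=(\tfrac12,\tfrac12)$). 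Lemma~\ref{lem:conv-sum-average} does not help here---it merely rewrites convex combinations as averages and says nothing about concentrating mass in one factor.

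The missing observation is that the two defining identities already force $N((a,b)+(c,d))=2$ for \emph{every} $c,d\ge 0$ with $N(c,d)=1$; this is exactly how the paper invokes positive octahedrality. Once you have it, no case split is needed in your approach: pick $(\alpha,\beta)$ on the positive $N$-sphere almost norming $(\|x^*\|,\|y^*\|)$, find $\tilde x,\tilde y$ in the induced coordinate slices with $\|x-\tilde x\|,\|y-\tilde y\|\ge 2-\eta$, set $w=(\alpha\tilde x,\beta\tilde y)$, and check $\|ax-\alpha\tilde x\|\ge a+\alpha-\eta$ (similarly in $Y$), so that $\|z_0-w\|_N\ge N(a+\alpha,b+\beta)-O(\eta)=2-O(\eta)$. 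The paper itself takes a different path and avoids slices and the dual norm entirely: it uses the closed-convex-hull characterisation, approximating an arbitrary $(u,v)\in S_Z$ by an average $\tfrac1m\sum(\|u\|x_i,\|v\|y_i)$ with $x_i\in\Delta^X_{\varepsilon/\nu}(x)$ and $y_i\in\Delta^Y_{\varepsilon/\nu}(y)$ (Lemma~\ref{lem:conv-sum-average} is used only to synchronise the number of terms), and then the same identity $N((a,b)+(\|u\|,\|v\|))=2$ shows $(\|u\|x_i,\|v\|y_i)\in\Delta^Z_\varepsilon(ax,by)$.
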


\begin{proof}
  Let $X$ and $Y$ be Banach spaces and
  $N$ a positively octahedral absolute normalized norm.
  Denote $Z=X\oplus_N Y$. Let
  $x\in S_X$ and $y\in S_Y$ be
  Daugavet points.
  Since $N$ is positively octahedral,
  there exist $a, b \ge 0$ such that
  $N(a,b) = 1$ and $N((a,b) + (c,d)) = 2$
  for every $c, d \ge 0$ with $N(c,d) = 1$.
  We will show that $(ax, by)$ is a Daugavet point.

  Let $\nu := N(1,1)$.
  Fix $\varepsilon > 0$, $(u,v) \in S_{Z}$, and
  $\delta > 0$. First consider the case $u\neq 0$ and $v\neq 0$.
  Since
  $u/\|u\| \in \clco \Delta_{\varepsilon/\nu}^X (x)$ and
  $v/\|v\| \in \clco \Delta_{\varepsilon/\nu}^Y (y)$,
  we have $x_1, \dots, x_m \in \Delta_{\varepsilon/\nu}^X(x)$
  and $y_1, \dots, y_m \in \Delta_{\varepsilon/\nu}^Y(y)$
  such that (here we use Lemma~\ref{lem:conv-sum-average}
  to get the same number of vectors in $X$ and $Y$)
  \begin{equation*}
    \Big\Vert
    \frac{u}{\|u\|} - \frac{1}{m} \sum_{i=1}^{m} x_i
    \Big\Vert
    < \delta
    \quad
    \text{and}
    \quad
    \Big\Vert
    \frac{v}{\|v\|} - \frac{1}{m} \sum_{i=1}^{m} y_i
    \Big\Vert
    < \delta.
  \end{equation*}
  Therefore
  \begin{align*}
    &\Big\Vert
      (u,v) - \frac{1}{m} \sum_{i=1}^{m}
      \big( \|u\| x_i, \|v\| y_i \big)
      \Big\Vert_N \\
    &=
      N\Big(
      \|u\|\Big\Vert
      \frac{u}{\|u\|} - \frac{1}{m} \sum_{i=1}^{m} x_i
      \Big\Vert,
      \|v\|\Big\Vert
      \frac{v}{\|v\|} - \frac{1}{m} \sum_{i=1}^{m} y_i
      \Big\Vert
      \Big) \\
    &\le
      \delta N\big(\|u\|, \|v\|\big) = \delta.
  \end{align*}
  Note that
  \begin{equation*}
    \| ax - \|u\| x_i\| \ge a + \|u\| - \varepsilon/\nu
  \end{equation*}
  and
  \begin{equation*}
    \| by - \|v\| y_i\| \ge b + \|v\| - \varepsilon/\nu
  \end{equation*}
  by the reverse triangle inequality.
  This implies that
  $\big( \|u\|x_i, \|v\|y_i\big)\in \Delta_\varepsilon^Z (ax, by)$
  since
  \begin{align*}
    &N\big(
      \left\| ax - \|u\|x_i \right\|,
      \left\| by - \|v\|y_i \right\|
      \big) \\
    &\geq
      N\big(
      a + \|u\| - \varepsilon/\nu,
      b + \|v\| - \varepsilon/\nu
      \big) \\
    &\geq
      N\big( a + \|u\|, b + \|v\| \big)
      - N \big( \varepsilon/\nu, \varepsilon/\nu \big)
      = 2 - \varepsilon.
  \end{align*}
  If $u = 0$ or $v = 0$, the proof is simpler.
\end{proof}

\begin{defn}
  We will say that an
  absolute normalized norm $N$ on $\mathbb{R}^2$
  has property $(\alpha)$ if for every
  $c,d\geq 0$ with $N(c,d) = 1$,
  there exist $\varepsilon>0$ and neighbourhood
  $W$ of $(c,d)$ in $\mathbb{R}^2$ such that:
  \begin{itemize}
  \item
    if $a,b\geq 0$ satisfies $N(a,b)=1$ and
    \begin{equation*}
      N( (a,b) + (c,d) ) \ge 2 - \varepsilon,
    \end{equation*}
    then $(a,b) \in W$;
  \item
    either
    $\sup_{(a,b) \in W} a < 1$ or $\sup_{(a,b) \in W} b < 1$.
  \end{itemize}
\end{defn}

\begin{rem}
  The $\ell_p$-norm, $1 < p < \infty$, on $\mathbb{R}^2$
  has property ($\alpha$).

  Given $c, d \ge 0$ with $\|(c,d)\|_p = 1$
  for all $\delta > 0$ there exists $\varepsilon > 0$
  such that for all $(a,b)$ with
  $\|(a,b)\|_p \le 1$ and
  $\|(a,b) + (c,d)\|_p \ge 2 - \varepsilon$
  we have $(a,b) \in B((c,d),\delta) =: W$.
  Choosing $\delta$ small enough we have
  either $\sup_{(a,b) \in W} a < 1$ or
  $\sup_{(a,b) \in W} b < 1$.

  Similarly, any strictly convex absolute normalized norm $N$
  on $\mathbb{R}^2$ has property ($\alpha$).
\end{rem}

\begin{prop}\label{sum no Daugavet points}
  Let $X$ and $Y$ be Banach spaces and $N$
  an absolute normalized norm on $\mathbb{R}^2$
  with property ($\alpha$).
  Then $X \oplus_N Y$ has no Daugavet points.
\end{prop}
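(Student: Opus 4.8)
The plan is to argue by contradiction. Suppose $Z := X \oplus_N Y$ has a Daugavet point $(x_0,y_0) \in S_Z$; I will produce a single slice of $B_Z$ together with a fixed $\varepsilon' > 0$ such that every point of the slice lies at distance $< 2 - \varepsilon'$ from $(x_0,y_0)$, contradicting Lemma~\ref{Daugavet point crit}~\ref{bbbb}. Set $c := \|x_0\|$ and $d := \|y_0\|$, so $N(c,d) = 1$ and (since absolute norms on $\mathbb{R}^2$ are monotone in each coordinate) $c,d \in [0,1]$. Apply property~($\alpha$) to the pair $(c,d)$ to get $\varepsilon_0 > 0$ and a neighbourhood $W \ni (c,d)$ with the two bulleted properties; by the symmetry between $X$ and $Y$ we may assume $1 - \eta_0 := \sup_{(a,b)\in W} a < 1$, with $\eta_0 > 0$.

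Now fix any $x^* \in S_{X^*}$ and let $z^* := (x^*,0) \in Z^*$. Since $N$ is normalized, $N(t,0) = |t|$, so $\{(u,0) : \|u\|\le 1\} \subseteq B_Z$ and hence $\|z^*\|_{Z^*} = 1$. Put $\beta := \eta_0/2$, $\varepsilon' := \varepsilon_0/2$, and consider the slice $S := \{(u,v)\in B_Z : x^*(u) > 1 - \beta\}$. Assume for contradiction that some $(u,v) \in S$ satisfies $\|(x_0,y_0) - (u,v)\|_N \ge 2 - \varepsilon'$. Using $\|x_0 - u\| \le c + \|u\|$, $\|y_0 - v\| \le d + \|v\|$ and monotonicity of $N$,
\begin{equation*}
  2 - \varepsilon' \le N\big(c + \|u\|,\ d + \|v\|\big) \le N(c,d) + N(\|u\|,\|v\|),
\end{equation*}
so $\rho := N(\|u\|,\|v\|) \ge 1 - \varepsilon'$. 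Write $(\|u\|,\|v\|) = \rho(a,b)$ with $a,b\ge 0$ and $N(a,b) = 1$; then $N\big((c,d)+\rho(a,b)\big) = N\big(c+\|u\|,\ d+\|v\|\big) \ge 2-\varepsilon'$, so
\begin{equation*}
  N\big((c,d)+(a,b)\big) \ge N\big((c,d)+\rho(a,b)\big) - (1-\rho) \ge (2-\varepsilon') - \varepsilon' = 2 - \varepsilon_0.
\end{equation*}
By property~($\alpha$), $(a,b)\in W$, hence $a \le 1-\eta_0$. But $\rho \le 1$ (as $(u,v)\in B_Z$), so $a = \|u\|/\rho \ge \|u\| \ge x^*(u) > 1 - \beta = 1 - \eta_0/2$, contradicting $a \le 1 - \eta_0$. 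Hence $S$ contains no point at distance $\ge 2 - \varepsilon'$ from $(x_0,y_0)$, so $(x_0,y_0)$ is not a Daugavet point; as it was arbitrary, $Z$ has none.

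The only step needing a little care is the initial geometric idea: choosing the slice by a functional supported on the $X$-factor, so that membership in the slice forces $\|u\|$ close to $1$, while property~($\alpha$) forces every element nearly at distance $2$ from $(x_0,y_0)$ to have $X$-norm bounded strictly below $1$. The rest --- monotonicity of absolute norms, the norm-one computation for $(x^*,0)$, and the bookkeeping between $\varepsilon',\beta$ and the $\varepsilon_0,\eta_0$ coming from property~($\alpha$) --- is routine, and in the symmetric case one simply uses a functional in $S_{Y^*}$ instead. I do not anticipate a genuine obstacle.
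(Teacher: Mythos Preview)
Your argument is correct and follows essentially the same approach as the paper: apply property~($\alpha$) to $(c,d)=(\|x_0\|,\|y_0\|)$, deduce that any $(u,v)$ nearly at distance~$2$ from $(x_0,y_0)$ must have $\|u\|$ bounded strictly below~$1$, and then witness the failure in the ``pure $X$'' direction. The only cosmetic difference is that the paper works directly with the definition $B_Z=\clco\Delta_\varepsilon(z)$ and shows a point $(w,0)$, $w\in S_X$, cannot be approximated by convex combinations from $\Delta_\varepsilon(z)$, whereas you invoke the equivalent slice characterization of Lemma~\ref{Daugavet point crit}\ref{bbbb} via the functional $(x^*,0)$; your normalization step $(\|u\|,\|v\|)=\rho(a,b)$ before applying property~($\alpha$) is in fact more careful than the paper's write-up.
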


\begin{proof}
  Let $X$ and $Y$ be Banach spaces and $N$
  an absolute normalized norm on $\mathbb{R}^2$
  with property ($\alpha$).
  Denote $Z=X\oplus _N Y$
  and let $z = (x,y) \in S_Z$.

  Let $(c,d) = (\|x\|,\|y\|)$.
  From the definition of property ($\alpha$)
  there exists $\varepsilon > 0$ and
  a neighbourhood $W$ of $(c,d)$.
  Without loss of generality we may assume that
  $\sup_{(a,b) \in W} a < 1$ since the case
  $\sup_{(a,b) \in W} b < 1$ is similar.
  Choose $\delta > 0$ such that
  $\sup_{(a,b) \in W} \le 1 - \delta$.

  Assume that $(u,v) \in \Delta_\varepsilon(z)$.
  Then
  \begin{align*}
    2 - \varepsilon \le
    N(\|u - x\|, \|v - y\|)
    \le
    N(\|u\| + \|x\|, \|v\| + \|y\|),
  \end{align*}
  hence $(\|u\|, \|v\|) \in W$
  from property ($\alpha$).
  In particular, $\|u\| \le 1 - \delta$.

  Let $w \in S_X$ and consider $(w,0) \in S_Z$.
  Given
  $(x_1, y_1), \dots, (x_n, y_n) \in \Delta_\varepsilon(z)$
  we have
  $\|x_i\| \le 1 - \delta$ for each $i = 1,\dotsc,n$
  and
  \begin{align*}
    \Big\Vert (w,0) - \frac{1}{n}\sum_{i=1}^{n} (x_i,y_i) \Big\Vert_N
    &\ge \Vert w - \frac{1}{n}\sum_{i=1}^{n} x_i\Vert
      \ge \|w\| - \frac{1}{n}\sum_{i=1}^{n}\|x_i\|
    \\
    &\ge 1 - \frac{1}{n}\sum_{i=1}^{n} (1 - \delta)
      = \delta.
  \end{align*}
  Using Lemma~\ref{lem:conv-sum-average} we see that
  this means that $(w,0) \notin \clco\Delta_{\varepsilon} (z)$,
  and we conclude that $z$ is not a Daugavet-point.
\end{proof}

\begin{eks}\label{exmp:Daug-vs-Delta}
  Consider the space $X = C[0,1] \oplus_2 C[0,1]$.

  $C[0,1]$ has the Daugavet property and in particular
  the DLD2P, hence $X$ has the DLD2P \cite[Theorem~3.2]{Zbl 1071.46015}.
  But, by Proposition~\ref{sum no Daugavet points},
  $X$ has no Daugavet-points even though
  every $x \in S_X$ is a $\Delta$-point.
\end{eks}

\section{The convex DLD2P}
\label{sec:convex-dld2p}
In this last section we consider Banach spaces $X$
with the property that $B_X = \clco(\Delta)$.
We show that this property is a diameter two property
that differs from the already known diameter two properties.
We also give examples of spaces with this new property.
\begin{defn}
  Let $X$ be a Banach space. If $B_X = \clco(\Delta)$,
  then we say that $X$ has the
  \emph{convex diametral local diameter two property (convex DLD2P)}.
\end{defn}

\begin{prop}
  Let $X$ be a Banach space. If $X$ has the convex DLD2P, then $X$ has
  the LD2P.
\end{prop}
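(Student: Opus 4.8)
The statement to prove is that the convex DLD2P implies the LD2P. The plan is to fix an arbitrary slice $S(x^*,\eps)$ of $B_X$ with $x^* \in S_{X^*}$ and $\eps > 0$, and show it has diameter $2$. Since $B_X = \clco(\Delta)$, the slice $S(x^*,\eps)$ must contain a $\Delta$-point: indeed, if every element of $\Delta$ satisfied $x^*(x) \le 1 - \eps$, then the closed convex hull $\clco(\Delta)$ would be contained in the closed half-space $\{x : x^*(x) \le 1 - \eps\}$, contradicting that it equals $B_X$ (which contains elements with $x^*$-value arbitrarily close to $1$). So pick $x_0 \in \Delta \cap S(x^*,\eps)$.

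Now the key point is that $S(x^*,\eps)$ is a slice \emph{containing} the $\Delta$-point $x_0$, so I would apply the characterization of $\Delta$-points from Lemma~\ref{Delta point crit}, specifically the equivalence \ref{aaa} $\Leftrightarrow$ \ref{bbb}: since $x_0$ is a $\Delta$-point and $x_0 \in S(x^*,\eps)$, for every $\delta > 0$ there exists $y \in S_X$ with $\|x_0 - y\| \ge 2 - \delta$. But $y$ need not lie in the slice, so a little more care is needed to extract two points of the slice that are far apart; the standard trick is to note that since $x_0$ itself is in the slice, $\|x_0 - y\| \ge 2 - \delta$ already witnesses that the \emph{diameter of $B_X$ seen through the slice} is close to $2$ — but to stay inside the slice I instead argue as follows. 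Take $\delta$ small, and observe that $x^*(y) \ge x^*(x_0) - \|x_0 - y\|\cdot 0$ is not automatic; rather, one uses that $x_0 \in S(x^*, \eps)$ together with $\|x_0 - y\| \approx 2$ forces, after possibly shrinking the slice parameter, that $y$ is \emph{almost} antipodal, and then $\tfrac{1}{2}(x_0 + \text{something})$ arguments recover a pair in the slice. Cleanest is: the diameter of $S(x^*,\eps)$ is at least $\|x_0 - x_0\|$... no — instead use that since $x_0 \in \clco\Delta_\delta(x_0)$ and $x_0 \in S(x^*,\eps)$, and $S(x^*,\eps)$ is relatively weakly open, a convex combination $\sum \lambda_i y_i$ approximating $x_0$ with $y_i \in \Delta_\delta(x_0)$ must have some $y_i$ in $S(x^*, \eps)$ (a convex combination lying in a slice forces one summand into that slice). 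That $y_i \in S(x^*,\eps)$ satisfies $\|x_0 - y_i\| \ge 2 - \delta$, and $x_0 \in S(x^*,\eps)$ as well, so $\operatorname{diam} S(x^*,\eps) \ge 2 - \delta$. Letting $\delta \to 0$ gives diameter $2$.

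I would carry out the steps in this order: (1) fix the slice and produce $x_0 \in \Delta \cap S(x^*,\eps)$ via the separation argument above; (2) fix $\delta \in (0, \eps)$ and write $x_0$ as within $\delta$ of a convex combination $\sum_{i=1}^m \lambda_i y_i$ with each $y_i \in \Delta_\delta(x_0)$, using that $x_0$ is a $\Delta$-point; (3) observe that $x^*\bigl(\sum \lambda_i y_i\bigr) > x^*(x_0) - \delta > 1 - \eps$ (shrinking the initial choice so that $x^*(x_0) > 1 - \eps + \delta$), hence some index $i_0$ has $x^*(y_{i_0}) > 1 - \eps$, i.e.\ $y_{i_0} \in S(x^*,\eps)$; (4) conclude $\operatorname{diam} S(x^*,\eps) \ge \|x_0 - y_{i_0}\| \ge 2 - \delta$ and let $\delta \to 0$.

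The main obstacle is step (3): making sure that a convex combination whose value under $x^*$ is large must have an individual summand with large value under $x^*$ — this is elementary (a weighted average exceeding a threshold forces at least one term above it), but it requires setting up the constants correctly so that the $\delta$-perturbation from step (2) does not eat the margin in the slice; concretely one should first choose $x_0 \in \Delta$ with $x^*(x_0)$ close enough to $1$ that there is room to absorb $\delta$. Everything else is routine bookkeeping with the definitions and Lemma~\ref{Delta point crit}.
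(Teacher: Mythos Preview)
Your final numbered plan (1)--(4) is correct and is essentially the paper's own argument: first use $B_X = \clco(\Delta)$ together with the ``a convex combination in a slice forces one summand into the slice'' observation to locate a $\Delta$-point $x_0$ in the slice, then repeat the same observation with $x_0 \in \clco\Delta_\delta(x_0)$ to find some $y_{i_0} \in \Delta_\delta(x_0)$ inside the slice. The only cosmetic difference is bookkeeping: the paper nests slice parameters $\eps/4 \subset \eps/2 \subset \eps$ and concludes $\operatorname{diam} S(x^*,\eps) \ge 2 - \eps$ (which suffices, since smaller slices sit inside larger ones), whereas you introduce an auxiliary $\delta$ and let $\delta \to 0$ directly.
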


\begin{proof}
  Let $x^* \in S_{X^*}$, $\eps > 0$, and consider the slice
  \begin{equation*}
    S(x^*,\eps) = \{x \in B_X: x^*(x) > 1- \eps\}.
  \end{equation*}
  Pick some $\hat{x} \in S(x^*,\eps/4)$. Choose
  $(x_i)_{i=1}^n \subset \Delta$ and a convex combination
  $x:= \sum_{i=1}^n \lambda_i x_i $ with
  $\|x - \hat{x}\| < \eps/4$. Now at least one of the $x_i$'s must be
  in $S(x^*,\eps/2)$ otherwise
  \begin{equation*}
    x^*(x) = \sum_{i=1}^n \lambda_i x^*(x_i)
    < \sum_{i=1}^n \lambda_i (1 - \eps/2)
    < 1 - \eps/2
  \end{equation*}
  which contradicts the fact that $\hat{x} \in S(x^*,\eps/4)$ and
  $\|\hat{x} - x\| < \eps/4$.
  Now let $x_k$ be one of the $x_i$'s which are in $S(x^*,\eps/2)$
  and use the same idea as above to produce some
  $y \in \Delta_\eps(x_k)$ such that $y \in S(x^*,\eps)$.
  Since $x_k \in S(x^*,\eps/2) \subset S(x^*,\eps)$ and
  $\|x_k - y\| > 2 - \eps$ we are done.
\end{proof}

\begin{prop}\label{prop:CK-convDLD2P}
  If $K$ is an infinite compact Hausdorff space,
  then $C(K)$ has the convex DLD2P.
\end{prop}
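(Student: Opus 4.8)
The plan is to exploit the characterization of $\Delta$-points in $C(K)$ obtained in Theorem~\ref{thm:CK-Daug-Delta-limit}: a norm-one function $f$ lies in $\Delta$ as soon as $|f(x_0)| = 1$ for some limit point $x_0$ of $K$ (indeed it is then even a Daugavet-point). Since $K$ is infinite and compact Hausdorff it cannot be discrete — a compact discrete space is finite — so $K$ has at least one limit point; fix one such point $x_0$ once and for all.

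Next I would show that every $g \in B_{C(K)}$ lies within an arbitrary $\varepsilon > 0$ of a convex combination of just two $\Delta$-points. Choose an open neighbourhood $U$ of $x_0$ with $|g(x) - g(x_0)| < \varepsilon$ for all $x \in U$, and use Urysohn's lemma to pick a continuous $\eta \colon K \to [0,1]$ with $\eta(x_0) = 1$ and $\eta \equiv 0$ on $K \setminus U$. Define
\begin{equation*}
  f_{+} := (1-\eta)g + \eta, \qquad f_{-} := (1-\eta)g - \eta.
\end{equation*}
For each $x$, $f_{\pm}(x)$ is a convex combination of $g(x) \in [-1,1]$ and $\pm 1$, so $\|f_{\pm}\|_\infty \le 1$; and $f_{\pm}(x_0) = \pm 1$, so in fact $\|f_{\pm}\| = 1 = |f_{\pm}(x_0)|$. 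Since $x_0$ is a limit point, Theorem~\ref{thm:CK-Daug-Delta-limit} gives $f_{+}, f_{-} \in \Delta$.

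Finally I would form the convex combination with weights $\tfrac{1+g(x_0)}{2}$ and $\tfrac{1-g(x_0)}{2}$, which are legitimate since $g(x_0) \in [-1,1]$:
\begin{equation*}
  \frac{1+g(x_0)}{2}\, f_{+} + \frac{1-g(x_0)}{2}\, f_{-}
  = (1-\eta)g + \eta\, g(x_0).
\end{equation*}
Its distance to $g$ is $\bigl\| \eta\,(g - g(x_0)) \bigr\|_\infty \le \varepsilon$, because $\eta$ is supported in $U$, where $g$ oscillates by less than $\varepsilon$. Letting $\varepsilon \to 0$ shows $g \in \clco(\Delta)$, and since $g \in B_{C(K)}$ was arbitrary, $B_{C(K)} = \clco(\Delta)$, i.e.\ $C(K)$ has the convex DLD2P.

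I do not expect a genuine obstacle here; the only points that need a word of care are that $K$ actually possesses a limit point (otherwise $K$ is finite, contradicting infiniteness) and that the two auxiliary functions truly sit on the unit sphere — norm $\le 1$ by convexity of the range and norm $\ge 1$ from their value at $x_0$ — so that Theorem~\ref{thm:CK-Daug-Delta-limit} may be invoked.
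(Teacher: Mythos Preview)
Your proof is correct and follows essentially the same approach as the paper: pick a limit point $x_0$, bump the given function up and down via a Urysohn function supported near $x_0$ to produce two norm-one functions hitting $\pm 1$ at $x_0$ (hence $\Delta$-points by Theorem~\ref{thm:CK-Daug-Delta-limit}), and recover the original function within $\varepsilon$ as the convex combination with weights $\tfrac{1\pm g(x_0)}{2}$. The paper works on $S_{C(K)}$ and separates out the case where $f$ already attains its norm at a limit point, whereas you treat all of $B_{C(K)}$ uniformly, but this is a cosmetic difference.
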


\begin{proof}
  We only need to show that
  $S_{C(K)} \subset \clco \Delta$.
  Let $f \in C(K)$ with $\|f\| = 1$.
  If $|f(x)| = 1$ for some limit point
  of $K$, then $f \in \Delta$ by
  Theorem~\ref{thm:CK-Daug-Delta-limit}.
  Assume that $|f(x)| < 1$ for every
  limit point of $K$ and
  let $x_0$ be a limit point of $K$.

  Let $\varepsilon > 0$ and choose a neighbourhood
  $U$ of $x_0$ such that $|f(x) - f(x_0)| < \varepsilon$
  for every $x \in U$.
  We use Urysohn's lemma to find a function
  $\eta : K \to [0,1]$ such that
  $\eta(x_0) = 1$ and $\eta = 0$ on $K \setminus U$.
  Define
  \begin{align*}
    f^+(x) &:= (1-\eta(x)) f(x) + \eta(x)(1), \\
    f^-(x) &:= (1-\eta(x)) f(x) + \eta(x)(-1).
  \end{align*}
  Then $f^{\pm} \in B_{C(K)}$ and both are
  in $\Delta$ by Theorem~\ref{thm:CK-Daug-Delta-limit}.
  Let $\lambda := \frac{1+f(x_0)}{2}$ and consider
  \begin{equation*}
    g(x) := \lambda f^+(x) + (1-\lambda)f^{-}(x).
  \end{equation*}
  Then
  \begin{equation*}
    g(x) =
    \begin{cases}
      f(x) & x \in K \setminus U, \\
      (1-\eta(x))f(x) + \eta(x)f(x_0) & x \in U.
    \end{cases}
  \end{equation*}
  We get
  \begin{equation*}
    \|g - f\|
    \le \max_{x \in U} |\eta(x)(f(x) - f(x_0))|
    < \varepsilon.
  \end{equation*}
  Since $\varepsilon > 0$ was arbitrary we
  get that $f \in \clco \Delta$.
\end{proof}

\begin{cor}\label{ell_infty has convex DLD2P}
  Both $c = C([0,\omega])$ and $\ell_\infty = C(\beta \mathbb{N})$
  have the convex DLD2P.
\end{cor}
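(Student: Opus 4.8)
The final statement is Corollary~\ref{ell_infty has convex DLD2P}, which asserts that $c = C([0,\omega])$ and $\ell_\infty = C(\beta\mathbb{N})$ have the convex DLD2P.

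The plan is to deduce this immediately from Proposition~\ref{prop:CK-convDLD2P}. That proposition says that $C(K)$ has the convex DLD2P whenever $K$ is an infinite compact Hausdorff space. So all I need to check is that both $[0,\omega]$ (the one-point compactification of $\mathbb{N}$, i.e. the convergent-sequence space) and $\beta\mathbb{N}$ (the Stone--\v{C}ech compactification of $\mathbb{N}$) are infinite compact Hausdorff spaces, and that the identifications $c = C([0,\omega])$ and $\ell_\infty = C(\beta\mathbb{N})$ hold isometrically.

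First I would recall that $[0,\omega]$ denotes $\mathbb{N}\cup\{\omega\}$ with the order topology (equivalently, the one-point compactification of the discrete space $\mathbb{N}$); it is compact, Hausdorff, and clearly infinite, and the standard identification of a convergent sequence $(a_n)$ with its limit gives the isometric isomorphism $c \cong C([0,\omega])$. Likewise $\beta\mathbb{N}$ is compact Hausdorff (and infinite, since it contains the discrete copy of $\mathbb{N}$), and the universal property of the Stone--\v{C}ech compactification yields the isometric isomorphism $\ell_\infty = C_b(\mathbb{N}) \cong C(\beta\mathbb{N})$. Then I would simply invoke Proposition~\ref{prop:CK-convDLD2P} with $K = [0,\omega]$ and with $K = \beta\mathbb{N}$ respectively.

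There is no real obstacle here — this is a one-line corollary. The only thing to be slightly careful about is that the convex DLD2P is a purely isometric property, so one must note that the isomorphisms above are isometric (which they are). Everything else is standard topology. I would phrase the proof in two sentences: one identifying the two function spaces over infinite compact Hausdorff spaces, and one applying the preceding proposition.

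\begin{proof}
  Both $[0,\omega]$ (the one-point compactification of $\mathbb{N}$) and
  $\beta\mathbb{N}$ are infinite compact Hausdorff spaces, and
  $c$ is isometrically isomorphic to $C([0,\omega])$ while
  $\ell_\infty$ is isometrically isomorphic to $C(\beta\mathbb{N})$.
  The conclusion is therefore immediate from
  Proposition~\ref{prop:CK-convDLD2P}.
\end{proof}
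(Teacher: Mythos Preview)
Your proposal is correct and matches the paper's approach: the corollary is stated without proof in the paper, as it follows immediately from Proposition~\ref{prop:CK-convDLD2P} once one recognizes the standard isometric identifications $c = C([0,\omega])$ and $\ell_\infty = C(\beta\mathbb{N})$ with infinite compact Hausdorff $K$. Your added remarks about why the identifications are isometric are harmless elaboration.
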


\begin{rem}\label{c_0 Delta empty}
  In $c$ the points in $\Delta$ are exactly
  the sequences with limit $1$ or $-1$.
  For $\ell_\infty$ we have that
  $\Delta$ consists of all sequences
  $(x_n) \in \ell_{\infty}$ such that
  $|\lim_{\mathcal{U}} x_n| = 1$, where
  $\mathcal{U}$ is a non-principal ultrafilter
  on $\mathbb{N}$.
  In particular, none of these spaces
  have the DLD2P.

  For $c_0$ we have $\Delta = \emptyset$
  since $\Delta$-points in $c_0$ have to
  be $\Delta$-points in $\ell_\infty$
  by Lemma~\ref{lem:Daug-ai-ideal}.
  Hence the convex DLD2P is not inherited from the bidual
  unlike the LD2P.
  The convex DLD2P is also not inherited by
  subspaces of codimension 1, since
  $c_0$ is of codimension 1 in $c$.
\end{rem}

Considering the facts that $\ell_\infty$ does not have the DLD2P and
$c_0$ has the LD2P, Remark~\ref{c_0 Delta empty}, and
Corollary~\ref{ell_infty has convex DLD2P},
we can conclude that the convex DLD2P
is a new diameter-2 property, different from the ones observed so far.

\begin{cor}\label{cor:convDLD2Pisnew}
  Let $X$ be a Banach space. Then
  \begin{align*}
    DLD2P \implies \text{convex } DLD2P \implies LD2P,
  \end{align*}
  where the implications cannot be reversed.
\end{cor}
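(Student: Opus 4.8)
The plan is to establish the two implications together with the non-reversibility, drawing on the examples already developed in the paper. For the first implication, DLD2P $\Rightarrow$ convex DLD2P: if $X$ has the DLD2P then by Proposition~\ref{prop:delta-char} we have $x \in \clco\Delta_\varepsilon(x)$ for every $x \in S_X$ and every $\varepsilon > 0$, which is precisely the statement that $S_X = \Delta$. Hence $B_X = \clco(S_X) = \clco(\Delta)$, so $X$ has the convex DLD2P. For the second implication, convex DLD2P $\Rightarrow$ LD2P, nothing new is needed: this is exactly the content of the (unnumbered) Proposition proved immediately after the definition of the convex DLD2P, so I would simply cite it.

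The substance of the corollary is that neither implication can be reversed, and here the point is to exhibit the right witnesses. To see that convex DLD2P does not imply DLD2P, take $X = c$ (or $X = \ell_\infty$): by Corollary~\ref{ell_infty has convex DLD2P} the space $c$ has the convex DLD2P, while by Remark~\ref{c_0 Delta empty} the $\Delta$-points of $c$ are exactly the sequences with limit $\pm 1$, so $S_c \neq \Delta$ and therefore $c$ fails the DLD2P (again by Proposition~\ref{prop:delta-char}, since the identity on $c$ minus a suitable norm-one rank-one projection coming from a functional not of the form $\lim$ has norm strictly less than $2$). To see that LD2P does not imply convex DLD2P, take $X = c_0$: it is classical (and noted in the paper) that $c_0$ has the LD2P, while by Remark~\ref{c_0 Delta empty} we have $\Delta_{c_0} = \emptyset$, so $\clco(\Delta) = \emptyset \neq B_{c_0}$ and $c_0$ fails the convex DLD2P.

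I would present this as a short proof: first dispose of the two implications in one or two sentences each via Proposition~\ref{prop:delta-char} and the preceding unnumbered Proposition; then give the two counterexamples $c$ and $c_0$ with references to Corollary~\ref{ell_infty has convex DLD2P} and Remark~\ref{c_0 Delta empty}. The only mild subtlety — the ``main obstacle'', such as it is — is making sure the failure of the DLD2P for $c$ is genuinely justified rather than merely asserted; but this is already implicitly contained in Remark~\ref{c_0 Delta empty}, which explicitly states that $c$ and $\ell_\infty$ do not have the DLD2P, so it suffices to quote it. Thus the proof is essentially a bookkeeping argument assembling results already in hand, and no new estimates are required.

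\begin{proof}
  That $DLD2P \Rightarrow \text{convex } DLD2P$ is immediate: by
  Proposition~\ref{prop:delta-char} the DLD2P is equivalent to
  $S_X \subseteq \Delta$, whence $B_X = \clco(S_X) = \clco(\Delta)$.
  The implication $\text{convex } DLD2P \Rightarrow LD2P$ is the
  Proposition preceding Proposition~\ref{prop:CK-convDLD2P}.

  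Neither implication reverses. The space $c = C([0,\omega])$ has the
  convex DLD2P by Corollary~\ref{ell_infty has convex DLD2P}, but by
  Remark~\ref{c_0 Delta empty} it does not have the DLD2P. The space
  $c_0$ has the LD2P, but $\Delta_{c_0} = \emptyset$ by
  Remark~\ref{c_0 Delta empty}, so $\clco(\Delta) = \emptyset \ne B_{c_0}$
  and $c_0$ fails the convex DLD2P.
\end{proof}
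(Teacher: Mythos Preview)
Your proof is correct and follows essentially the same approach as the paper, which justifies the corollary in the paragraph immediately preceding it by citing exactly the same ingredients: the unnumbered Proposition for convex DLD2P $\Rightarrow$ LD2P, Corollary~\ref{ell_infty has convex DLD2P} and Remark~\ref{c_0 Delta empty} for the counterexamples $c$ (or $\ell_\infty$) and $c_0$. The only difference is cosmetic: the paper leaves the corollary without a formal proof environment, whereas you spell out the trivial first implication via $S_X = \Delta$ explicitly.
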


Our next aim is to show that M\"{u}ntz spaces also have the convex
DLD2P.

\begin{thm}\label{Mutnz space has cDLD2P}
  Let $X = M(\Lambda)$ or $X = M_0(\Lambda)$ be a M\"{u}ntz space.
  Then $X$ has the convex DLD2P.
\end{thm}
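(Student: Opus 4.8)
The plan is to show $S_X \subseteq \clco \Delta$ by approximating an arbitrary norm-one $f \in S_X$ by convex combinations of $\Delta$-points. Since by Theorem~\ref{thm:muntz-DaugDeltaChar} (for $M_0(\Lambda)$ with $\lambda_1 \ge 1$) the $\Delta$-points are exactly the functions attaining their norm at $t=1$, and by Proposition~\ref{f at the end 1 is Daugavet point} the functions $g \in S_X$ with $g(1)=\pm1$ are Daugavet-points (hence in $\Delta$) for \emph{any} $\Lambda$ and for both $M(\Lambda)$ and $M_0(\Lambda)$, it suffices to approximate a given $f \in S_X$ by convex combinations of functions in $S_X$ with value $\pm1$ at the endpoint $t=1$. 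This mirrors the $C(K)$ argument in Proposition~\ref{prop:CK-convDLD2P}, with the endpoint $1$ playing the role of the limit point $x_0$.

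First I would fix $f \in S_X$ and $\varepsilon > 0$. Using continuity of $f$ at $t=1$, choose $t_1 \in (0,1)$ with $|f(t) - f(1)| < \varepsilon$ for $t \in [t_1,1]$. The key building block is Lemma~\ref{lem:spike_functions}: it provides a function $h = (t^{\lambda_k} - t^{\lambda_l})/\|t^{\lambda_k} - t^{\lambda_l}\| \in S_X$ with $h \ge 0$, $h(1) \le 1$, and $h|_{[0,t_1]} < \delta$ for any prescribed $\delta$; moreover $h$ attains the value $1$ somewhere in $(t_1,1]$ — in fact, since $h \ge 0$ and $\|h\|=1$, and $h$ is small on $[0,t_1]$, the norm is attained in $(t_1,1)$, but more importantly I will want to correct $h$ so it takes value exactly $1$ or use it to push $f$ to the endpoint. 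The cleaner route: form
\[
  f^{\pm}(t) := \bigl(1 - h(t)\bigr) f(t) \pm h(t),
\]
so that $f^{\pm}(1) = (1-h(1))f(1) \pm h(1)$. This is not quite $\pm1$ unless $h(1)=1$, so I would instead strengthen the use of Lemma~\ref{lem:spike_functions} by additionally requiring $l$ large enough that $h(1) = \|t^{\lambda_k}-t^{\lambda_l}\|^{-1}(1 - 1) $ — wait, $h(1) = 0$ since $t^{\lambda_k}|_{t=1} = t^{\lambda_l}|_{t=1} = 1$. So the spike function vanishes at the endpoint, which is the wrong place. The correct construction is therefore to reflect: work with a spike near $t=1$ built from a \emph{difference evaluated against} the endpoint behaviour, or — simpler — note that the spike $h$ has its peak at some interior $s \in (t_1,1)$ with $h(s)=1$, and use the Daugavet-point functions differently.

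Let me restructure. Since any $g \in S_X$ with $|g(1)|=1$ is a $\Delta$-point, and since I want $f \in \clco\Delta$, I would exhibit for each $\varepsilon>0$ two functions $g^{+}, g^{-} \in B_X$ with $g^{\pm}(1) = \pm 1$ and a convex combination $\lambda g^{+} + (1-\lambda)g^{-}$ within $\varepsilon$ of $f$. Concretely, pick (by Lemma~\ref{lem:spike_functions} applied cleverly, or by the Clarkson–Erd\H{o}s–Schwartz density of Müntz polynomials) a function $\eta \in X$ with $0 \le \eta \le 1$, $\eta(1) = 1$, and $\eta$ uniformly small on $[0,t_1]$; such $\eta$ exists because $t^{\lambda_n} \to 0$ uniformly on $[0,t_1]$ while $t^{\lambda_n}(1)=1$, so a single monomial $t^{\lambda_n}$ with $n$ large works (here $\lambda_n \ge 1$ or even just $\lambda_n>0$ suffices, and for $M_0$ we need $n\ge1$). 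Then set $g^{\pm} := (1-\eta) f \pm \eta$, so $g^{\pm}(1) = \pm 1$, $\|g^\pm\|\le 1$, hence $g^\pm \in \Delta$ by Theorem~\ref{thm:muntz-DaugDeltaChar}; with $\lambda = (1+f(1))/2 \in [0,1]$,
\[
  \lambda g^{+} + (1-\lambda) g^{-} = (1-\eta) f + \eta f(1),
\]
which differs from $f$ by $\eta\,(f(1) - f)$, of norm at most $\sup_t \eta(t)|f(1)-f(t)| < \varepsilon$ on $[t_1,1]$ and $\le \delta$ on $[0,t_1]$; choosing $\eta$ and $t_1$ appropriately makes this $<\varepsilon$. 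Letting $\varepsilon \to 0$ gives $f \in \clco\Delta$.

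The main obstacle is confirming the existence of a single Müntz monomial (or simple Müntz polynomial) $\eta$ with $\eta(1)=1$, $0\le\eta\le1$, and $\eta$ arbitrarily uniformly small on $[0,t_1]$ while staying in $M_0(\Lambda)$ when constants are excluded — but $\eta = t^{\lambda_n}$ does exactly this for $n$ large (it is increasing, equals $1$ at $t=1$, and $\to 0$ uniformly on $[0,t_1]$), and it lies in $M_0(\Lambda)$ since $n \ge 1$. For $M(\Lambda)$ there is even more freedom. A secondary point to verify is that $g^\pm \in B_X$: since $0\le\eta\le1$ and $\|f\|\le1$, $|g^\pm(t)| = |(1-\eta(t))f(t) \pm \eta(t)| \le (1-\eta(t))|f(t)| + \eta(t) \le 1$, which is immediate. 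I would present the argument in parallel to Proposition~\ref{prop:CK-convDLD2P}, substituting the monomial $t^{\lambda_n}$ for the Urysohn function and the endpoint $t=1$ for the limit point.
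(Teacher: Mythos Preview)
Your approach has a fatal gap: $M(\Lambda)$ and $M_0(\Lambda)$ are \emph{not algebras}, so the functions $g^{\pm}:=(1-\eta)f\pm\eta$ are in general \emph{not} in $X$. Concretely, with $\eta=t^{\lambda_n}$ and $f=\sum_k a_k t^{\lambda_k}$ a M\"untz polynomial, the product $(1-\eta)f$ contains terms $t^{\lambda_k+\lambda_n}$, and there is no reason for $\lambda_k+\lambda_n$ to lie in $\Lambda$. The $C(K)$ template from Proposition~\ref{prop:CK-convDLD2P} works precisely because $C(K)$ is closed under multiplication; that is exactly what fails here, and it is the whole difficulty of the theorem.

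The paper's proof circumvents this by using only \emph{additive} perturbations
\[
  f_n^{+}(x)=f(x)+(1-f(1))x^{\lambda_n},\qquad
  f_n^{-}(x)=f(x)-(1+f(1))x^{\lambda_n},
\]
which manifestly lie in $X$ and satisfy $f_n^\pm(1)=\pm1$ together with the exact identity $\mu f_n^{+}+(1-\mu)f_n^{-}=f$ for $\mu=(1+f(1))/2$. The price is that $\|f_n^{\pm}\|\le 1$ is no longer automatic: without the convex-combination structure $(1-\eta)f\pm\eta$ one cannot use the pointwise bound $(1-\eta)|f|+\eta\le 1$. To obtain $\|f_n^{\pm}\|\le 1$ the paper first passes by density to a M\"untz polynomial $f$ with $\|f\|=1-s<1$ (creating slack), then uses Descartes' rule of signs on $f'$ and $f''$ to find an interval $(t_0,1)$ on which these do not change sign, and finally carries out a case analysis on the convexity of $f$ near $1$ to locate an $n$ for which both $f_n^{+}$ and $f_n^{-}$ stay in the unit ball. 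None of this machinery is needed in your argument, but your argument does not produce functions in the space.
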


\begin{proof}
  It is enough to show that $S_X \subset \clco{\Delta}$.
  Since $P := \linspan\{t^{\lambda_n}\}$ is dense in $X$,
  it is enough to show that if
  $f \in B_P$ with $\|f\| = 1 - s$ for some $0 < s < 1$,
  then $f \in \conv \Delta$.
  To this end, given $n \in \mathbb{N}$ we define
  \begin{equation*}
    f^{+}_n(x) = f(x) + (1 - f(1)) x^{\lambda_n}
  \end{equation*}
  and
  \begin{equation*}
    f^{-}_n(x) = f(x) - (1 + f(1)) x^{\lambda_n}.
  \end{equation*}
  From Proposition~\ref{f at the end 1 is Daugavet point}
  we see that $f^{\pm}_n$ are candidates for being
  $\Delta$-points since
  \begin{equation*}
    f^{\pm}_n(1) = f(1) \pm (1 \mp f(1)) = \pm 1.
  \end{equation*}
  If we define $\mu = \frac{f(1)+1}{2}$, that is,
  $2\mu - 1 = f(1)$, we have a convex combination
  \begin{equation*}
    \mu f^+_n(x) + (1 - \mu)f_n^{-}(x)
    = f(x) + \bigl( 2\mu - 1 - f(1) \bigr) x^{\lambda_n}
    = f(x).
  \end{equation*}
  We need to show that when $n$ is large enough we have
  $f^{\pm}_n \in S_P$.

  Since $f \in P$ we can write
  \begin{equation*}
    f(x) = \sum_{k=0}^m a_k x^{\lambda_k}.
  \end{equation*}
  Now, $f$, $f'$, and $f''$ are all generalized polynomials
  so by Descartes' rule of signs,
  see e.g. \cite[Theorem~3.1]{Jameson},
  they only have a finite number of zeros on $(0,1]$.
  Hence there exists $t_0 \in (0,1)$
  such that neither $f'$ nor $f''$ changes sign
  on $(t_0,1)$.
  Without loss of generality we may assume that
  $f' < 0$ on $(t_0,1)$.
  (If $f' > 0$ on $(t_0,1)$ we consider $-f$.)

  There exists $N$ such that
  \begin{equation}\label{eq:1}
    t_0^{\lambda_n} < s/2 \quad
    \text{for}\ n > N.
  \end{equation}
  For $n > N$ we get
  \begin{equation*}
    |f^-_n(x)| \le 1 - s + (1+f(1))s/2 \le 1
  \end{equation*}
  on $[0,t_0]$ and on $[t_0,1]$ we have
  \begin{equation*}
    \frac{d}{dx}(f^{-}_n(x))
    = f'(x) - \lambda_n (1+f(1)) x^{\lambda_n - 1}
    < 0.
  \end{equation*}
  We have $|f^{-}_n(x)| \le 1$
  in both endpoints of $[t_0,1]$.
  Hence $\|f^-_n\| \le 1$.

  It remains to find $n > N$ such that also
  $f^+_n \in S_P$. We consider two cases.

  \textbf{Case I:}
  Assume there exists $0 < t_0 < 1$ such that
  $f' < 0$ and $f'' > 0$ on $(t_0,1)$.
  For $n > N$ we have $d^2/dx^2(f^+_n) > 0$ on $(t_0,1)$,
  hence $f^+_n$ is convex on $[t_0,1]$
  and (by using \eqref{eq:1})
  \begin{equation*}
    \|f^+_n\| \le \max(f^+_n(t_0),f^+_n(1))
    \le \max(1 - s + (1-f(1))t_n^{\lambda_n},1) \le 1
  \end{equation*}
  since also $f^+_n(x) > f(x) \ge -1$ for all $x \in [0,1]$.

  \textbf{Case II:}
  Assume there exists $0 < t_0 < 1$ such that
  $f' < 0$ and $f'' < 0$ on $[t_0,1]$.

  Let $\delta := f(t_0) - f(1) > 0$.
  Define
  \begin{equation*}
    t_n := \sqrt[\lambda_n]{1-\frac{\delta}{1-f(1)}},
  \end{equation*}
  that is
  \begin{equation*}
    t_n^{\lambda_n} = \frac{1-f(1) - \delta}{1-f(1)}
  \end{equation*}
  Note that $t_n \to 1$.

  Write $g_n(x) = (1-f(1))x^{\lambda_n}$.
  Then $g'_n(x) = (1-f(1))\lambda_n x^{\lambda_n - 1}$
  and
  \begin{align*}
    g'_n(t_n)
    &= (1-f(1)) \lambda_n
      \frac{1-f(1)-\delta}{1-f(1)}
      \left(\frac{1-f(1)-\delta}{1-f(1)}\right)^{-1/\lambda_n}
    \\
    &= \lambda_n (1-f(1)-\delta)
      \left(\frac{1-f(1)-\delta}{1-f(1)}\right)^{-1/\lambda_n}.
  \end{align*}
  Note that $g'_n(t_n) \to \infty$
  (since we assume that $\sum_{n=1}^\infty \lambda_n^{-1} < \infty$).
  Let $M := \max_{x \in [t_0,1]} |f'(x)|$.
  Choose $n > N$ such that $t_0 < t_n < 1$
  and
  \begin{equation*}
    g'_n(t_n) > M.
  \end{equation*}
  Then for $x \in [t_n,1]$ we have
  \begin{equation*}
    \frac{d}{dx}(f^{+}_n(x))
    = f'(x) + \lambda_n (1-f(1)) x^{\lambda_n - 1}
    > -M + g'_n(t_n) > 0
  \end{equation*}
  hence $f^{+}_n(x) \le f^{+}_n(1)$ on $[t_n,1]$.

  For $x \in [t_0,t_n]$ we get
  \begin{align*}
    f^+_n(x)
    &= f(x) + g_n(x)
      \le f(1) + \delta + (1-f(1))t_n^{\lambda_n} \\
    &= f(1) + \delta + (1-f(1)-\delta) \le 1.
  \end{align*}
  While on $[0,t_0]$ we have, by using \eqref{eq:1},
  \begin{equation*}
    |f^{+}_n(x)| \le \|f\| + 2 \cdot s/2 \le 1.
  \end{equation*}
  Hence $\|f^+_n\| \le 1$.
\end{proof}

It is known that given Banach spaces $X$ and $Y$, they have the
Daugavet property if and only if $X\oplus_1 Y$ or $X\oplus_\infty Y$
has Daugavet property
(see \cite[Lemma~2.15]{MR1621757} and \cite[Corollary~5.4]{BKSW}).
For the DLD2P we have that for any absolute normalized norm
on $\mathbb{R}^2$, both $X$ and $Y$ have the DLD2P if and only
if $X \oplus_N Y$ has the DLD2P \cite[Theorem~3.2]{Zbl 1071.46015}.
The following theorem shows that the convex DLD2P also behaves well
under direct sums.
\begin{thm}\label{sum has convDLD2P}
  Let $N$ be an absolute normalized norm on $\mathbb{R}^2$.
  If $X$ and $Y$ have the convex DLD2P,
  then $X\oplus_N Y$ has the convex DLD2P.
\end{thm}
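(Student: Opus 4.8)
The plan is to first establish the construction announced just before the statement — that suitably scaled ``products'' of $\Delta$-points are $\Delta$-points of the sum — and then to combine it with the two hypotheses via a product convex combination. Write $Z := X\oplus_N Y$. Throughout I will use freely that an absolute normalized norm on $\mathbb{R}^2$ is monotone and positively homogeneous, so that $N(ta,tb)=tN(a,b)$ for $t\ge 0$ and $N(a',b')\le N(a,b)$ whenever $0\le a'\le a$ and $0\le b'\le b$; in particular $N(\|x\|,\|y\|)=1$ forces $x\in B_X$ and $y\in B_Y$.

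The key claim I will prove is: \emph{if $x\in\Delta_X$, $y\in\Delta_Y$, and $a,b\ge 0$ with $N(a,b)=1$, then $(ax,by)\in\Delta_Z$.} To see this, fix $\varepsilon,\delta>0$. Since $x$ is a $\Delta$-point of $X$ and $y$ a $\Delta$-point of $Y$, I can pick finite convex combinations of elements of $\Delta_\varepsilon^X(x)$ and of $\Delta_\varepsilon^Y(y)$ that approximate $x$ and $y$ within $\delta$, and by Lemma~\ref{lem:conv-sum-average} I may take these to be averages of the same length $m$, say of $x_1,\dots,x_m\in\Delta_\varepsilon^X(x)$ and $y_1,\dots,y_m\in\Delta_\varepsilon^Y(y)$. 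Then each $(ax_i,by_i)$ lies in $B_Z$ by monotonicity, and since $\|ax-ax_i\|=a\|x-x_i\|\ge a(2-\varepsilon)$ and likewise in $Y$,
\begin{equation*}
  N\bigl(\|ax-ax_i\|,\,\|by-by_i\|\bigr)\ \ge\ N\bigl(a(2-\varepsilon),\,b(2-\varepsilon)\bigr)\ =\ (2-\varepsilon)N(a,b)\ =\ 2-\varepsilon ,
\end{equation*}
so $(ax_i,by_i)\in\Delta_\varepsilon^Z\bigl((ax,by)\bigr)$; an analogous computation gives $\|(ax,by)-\frac1m\sum_i(ax_i,by_i)\|_N\le\delta$. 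As $\varepsilon,\delta$ were arbitrary, $(ax,by)\in\clco\Delta_\varepsilon^Z((ax,by))$ for all $\varepsilon>0$, i.e. $(ax,by)\in\Delta_Z$. The same computation covers the degenerate values $a=0$ (forcing $b=1$) and $b=0$ (forcing $a=1$).

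Next I upgrade this to $S_Z\subseteq\clco\Delta_Z$. Let $z=(x,y)\in S_Z$ and put $a=\|x\|$, $b=\|y\|$, so $N(a,b)=1$. If $a,b>0$, write $x=ax'$, $y=by'$ with $x'\in S_X\subseteq B_X=\clco\Delta_X$ and $y'\in S_Y\subseteq B_Y=\clco\Delta_Y$. Given $\delta>0$, choose $u_1,\dots,u_p\in\Delta_X$, $v_1,\dots,v_q\in\Delta_Y$ and convex coefficients $(\lambda_i)$, $(\mu_j)$ with $\|x'-\sum_i\lambda_iu_i\|$ and $\|y'-\sum_j\mu_jv_j\|$ small. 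The weights $\lambda_i\mu_j\ge 0$ sum to $1$, and
\begin{equation*}
  \sum_{i,j}\lambda_i\mu_j\,(au_i,bv_j)=\Bigl(a\sum_i\lambda_iu_i,\ b\sum_j\mu_jv_j\Bigr)
\end{equation*}
lies within $N\bigl(a\|x'-\sum_i\lambda_iu_i\|,\,b\|y'-\sum_j\mu_jv_j\|\bigr)$ of $z$; since each $(au_i,bv_j)\in\Delta_Z$ by the claim, $z\in\clco\Delta_Z$. If $a=0$, then $b=1$ and $z=(0,y)$ with $y\in S_Y\subseteq\clco\Delta_Y$; approximating $y$ by $\sum_j\mu_jv_j$ with $v_j\in\Delta_Y$ and fixing any $u\in\Delta_X$ (the set $\Delta_X$ is nonempty since $\clco\Delta_X=B_X$), the claim gives $(0,v_j)\in\Delta_Z$, and $\sum_j\mu_j(0,v_j)=(0,\sum_j\mu_jv_j)$ approximates $z$, so again $z\in\clco\Delta_Z$; the case $b=0$ is symmetric. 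Since $B_Z=\clco(S_Z)$, it follows that $B_Z=\clco\Delta_Z$, i.e. $Z$ has the convex DLD2P.

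The genuine content of the argument is the claim in the second paragraph: one has to check from the definition, using only that $x,y$ are $\Delta$-points, that the scaled pair $(ax,by)$ is a $\Delta$-point of $X\oplus_N Y$. The remaining technical nuisances are minor: equalizing the lengths of the two approximating convex combinations via Lemma~\ref{lem:conv-sum-average}, handling the degenerate cases where a coordinate norm vanishes (which need $\Delta_X,\Delta_Y\ne\emptyset$, a consequence of the convex DLD2P), and the repeated appeal to monotonicity and homogeneity of $N$. The upgrade step is then just the remark that the product of the two convex combinations displays $z$ as a convex combination of points $(au_i,bv_j)\in\Delta_Z$.
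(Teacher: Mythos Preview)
Your proof is correct and follows essentially the same strategy as the paper: first establish the claim that $(ax,by)\in\Delta_Z$ whenever $x\in\Delta_X$, $y\in\Delta_Y$, and $N(a,b)=1$ (using Lemma~\ref{lem:conv-sum-average} exactly as the paper does), then combine this with the two convex DLD2P hypotheses to get $S_Z\subseteq\clco\Delta_Z$. The only cosmetic difference is in the second step, where the paper invokes Lemma~\ref{lem:conv-sum-average} a second time to equalize the lengths of the two approximating averages of $x/\|x\|$ and $y/\|y\|$, while you instead take the product convex combination with weights $\lambda_i\mu_j$; both work and your variant is slightly slicker.
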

\begin{proof}
  Assume that $X$ and $Y$ are Banach spaces with the convex DLD2P.
  Denote $Z = X \oplus_N Y$.

  \textbf{Claim:} If $a, b \ge 0$ with $N(a,b) = 1$,
  $x \in \Delta_X$, and $y \in \Delta_Y$,
  then $(ax, by) \in \Delta_Z$.

  \textbf{Proof of claim.}
  Let $\varepsilon > 0$ and $0 < \gamma < \varepsilon$.
  Since $x \in \Delta_X$ and $y \in \Delta_Y$,
  we have $x_1, \dotsc, x_m \in \Delta^X_\varepsilon(x)$ and
  $y_1, \dotsc, y_m \in \Delta^Y_\varepsilon(y)$
  such that
  (using Lemma~\ref{lem:conv-sum-average})
  \begin{equation*}
    \Big\| x - \frac{1}{m}\sum_{i=1}^{m} x_i \Big\|
    < \gamma
    \quad \text{and} \quad
    \Big\| y - \frac{1}{m}\sum_{i=1}^{m} y_i\Big\|
    < \gamma.
  \end{equation*}
  Note that
  \begin{align*}
    \Big\|
    ( ax, by )
    - \frac{1}{m}\sum_{i=1}^{m} \big(a x_i, b y_i \big)
    \Big\|_N
    &=
      N\Big(
      a \Big\| x - \frac{1}{m}\sum_{i=1}^{m} x_i \Big\|,
      b \Big\| y - \frac{1}{m}\sum_{i=1}^{m} y_i \Big\|
      \Big) \\
    &\leq
      N(\gamma a, \gamma b)
      = \gamma N(a, b)
      = \gamma,
  \end{align*}
  and
  \begin{align*}
    \big\|
    ( ax, by ) - ( a x_i, b y_i )
    \big\|_N
    &=
      N(a \|x - x_i\|, b \|y - y_i\|) \\
    &\geq
      N(a (2-\varepsilon), b (2-\varepsilon)) \\
    &=
      (2-\varepsilon) N(a, b)
      = 2-\varepsilon.
  \end{align*}
  This concludes the proof of the claim.

  Now let $(x,y) \in S_Z$.
  We will show that $(x,y) \in \clco\Delta_Z$.

  Let $\delta >0$.
  First consider the case $x\neq 0$ and $y\neq 0$. Then
  $\frac{x}{\|x\|} \in \clco \Delta_X$ and
  $\frac{y}{\|y\|} \in \clco\Delta_Y$ by the assumption;
  hence there are $x_1,\dots,x_n \in \Delta_X$
  and $y_1,\dots,y_n \in \Delta_Y$ such that
  (here we use Lemma~\ref{lem:conv-sum-average} again)
  \begin{equation*}
    \Big\Vert \frac{x}{\|x\|} - \frac{1}{n}\sum_{i=1}^{n} x_i\Big\Vert < \delta
    \quad \text{and} \quad
    \Big\Vert \frac{y}{\|y\|} - \frac{1}{n}\sum_{i=1}^{n} y_i\Big\Vert < \delta.
  \end{equation*}
  By the claim above we have $(\|x\|x_i, \|y\|y_i) \in \Delta_Z$.
  All that remains is to note that
  \begin{align*}
    &\Big\Vert
      (x,y)
      -
      \frac{1}{n}\sum_{i=1}^{n} \big( \|x\|x_i, \|y\|y_i \big)
      \Big\Vert_N \\
    &=
      N\Big(
      \|x\|\Big\Vert \frac{x}{\|x\|} - \frac{1}{n}\sum_{i = 1}^{n} x_i\Big\Vert,
      \|y\|\Big\Vert \frac{y}{\|y\|} - \frac{1}{n}\sum_{i = 1}^{n} y_i\Big\Vert
      \Big) \\
    &\leq
      N\big( \delta \|x\|, \delta \|y\| \big)
    = \delta N\big( \|x\|, \|y\| \big) = \delta.
  \end{align*}
  Now consider the case where $y=0$
  (a similar argument holds for the case $x=0$).
  We have
  \begin{equation*}
    \|(x,0)\|_N = N(\|x\|,0) = \|x\|,
  \end{equation*}
  so that $(x,0) \in \clco\Delta_Z$
  follows from $x\in \clco\Delta_X$
  since the claim above shows that
  $(x_i,0) \in \Delta_Z$ when $x_i \in \Delta_X$.
\end{proof}

\begin{rem}
  Let $X$ and $Y$ be Banach spaces.
  If $X$ has the convex DLD2P and $N$ is the $\ell_\infty$-norm,
  then $X\oplus_N Y$ has the convex DLD2P.
\end{rem}
Although we have mostly settled the results about the question
whether the direct sum with absolute normalized norm has
a $\Delta$-point/a Daugavet-point/the convex DLD2P
(there are some norms left to look at in the Daugavet-point case),
the results about the components of a direct sum with a given
property having the same property, are all still unknown.
\begin{prob}
  Given $X\oplus_N Y$ with a
  $\Delta$-point/a Daugavet point/the convex DLD2P,
  does $X$ have a $\Delta$-point/a Daugavet point/the convex DLD2P?
\end{prob}
%
%

\bibliographystyle{amsplain}
\footnotesize

\end{document}